\documentclass[11pt]{amsart}
\usepackage{graphicx} 

\usepackage{hyperref}
\hypersetup{
  colorlinks   = true, 
  urlcolor     = blue, 
  linkcolor    = blue, 
  citecolor   = purple 
}
\usepackage[utf8]{inputenc}
\usepackage{fancyhdr}
\usepackage{amsmath}
\usepackage{amsthm}
\usepackage{amsfonts, amssymb}
\usepackage{amsthm}
\usepackage{mathrsfs}
\usepackage{mathtools}
\usepackage{graphics}
\usepackage{amssymb}
\usepackage{relsize}
\usepackage[all]{xy}
\usepackage{color}
\usepackage{tikz-cd}
\usepackage{tikz}
\usetikzlibrary{calc}
\usepackage{listings}
\usepackage{geometry}  
\usepackage{cite} 

\usepackage{array}

\usepackage[toc,page]{appendix}

\usepackage[most]{tcolorbox}

\usepackage{subcaption}
\usepackage{float}

\newcommand\OO{\mathcal{O}}

\newcommand\Aut{\text{Aut}}

\DeclareMathOperator{\sing}{Sing}
\DeclareMathOperator{\exc}{Exc}
\DeclareMathOperator{\supp}{Supp}

\DeclareMathOperator{\ord} {ord}

\DeclareMathOperator{\mult} {mult}
\DeclareMathOperator{\Diff} {Diff}
\DeclareMathOperator{\rk} {rk}

\DeclareMathOperator{\Spec} {Spec}

\newtheorem{theorem}{Theorem}[section]
\newtheorem{proposition}[theorem]{Proposition}
\newtheorem{lemma}[theorem]{Lemma}

\newtheorem{remark}[theorem]{Remark}
\newtheorem{example}[theorem]{Example}

\newtheorem{question}[theorem]{Question}

\newtheorem{definition}[theorem]{Definition}

\theoremstyle{definition}

\title{Recent progress on the Minimal Model Program for foliations}

\author{Paolo Cascini}

\author{Calum Spicer}

\subjclass[2010]{37F75}
\date{}

\address{Department of Mathematics, Imperial College London, 180 Queen’s
Gate, London SW7 2AZ, UK}
\email{p.cascini@ic.ac.uk}

\address{Department of Mathematics, King's College London, Strand,
London WC2R 2LS, UK}
\email{calum.spicer@kcl.ac.uk}
\begin{document}

\begin{abstract}
We survey recent progress on the birational geometry of  foliations on
complex varieties.  We focus on the MMP viewpoint: singularities,  adjunction and
applications to the MMP for  foliations on surfaces and to the existence of flips
on threefolds.  
\end{abstract}

\maketitle

\tableofcontents

\section{Introduction}\label{s_introduction}

Foliations are useful tools in algebraic geometry and its neighbouring subjects:
they arise from fibrations, from algebraic group actions, and from differential
forms or vector fields.  From the birational point of
view one is led to study pairs $(X,\mathcal F)$ up to birational modification,
with the guiding principle that the canonical class of the foliation should
control the geometry of its leaves in much the same way that the canonical
divisor controls the geometry of a variety.

More concretely, if $\mathcal F$ is a foliation of rank $r$ on a normal variety
$X$, one can associate to it the canonical sheaf
$\omega_{\mathcal F} := (\wedge^r T_{\mathcal F})^{*}$ and a canonical divisor
$K_{\mathcal F}$ such that $\OO_X(K_{\mathcal F})\simeq \omega_{\mathcal F}$.
When the singularities of $\mathcal F$ are mild, a striking amount of the
technology of the Minimal Model Program (MMP) can be adapted to $K_{\mathcal F}$:
there are analogues of adjunction, cone and contraction theorems, and in
favourable situations one can run a $K_{\mathcal F}$-MMP.

At the same time, foliations introduce genuinely new phenomena.  Even on a
smooth ambient variety, there is no hope of resolving a singular foliation
to a smooth one by a sequence of blow-ups: blowing-up can create singularities
for the transformed foliation (cf.\ Example~\ref{ex_blow_up}), and the
appropriate goal is a \emph{reduction} of foliation singularities to a
distinguished class such as canonical singularities,
a problem which is currently understood in full generality only in low
dimensions.  Finally, the
base point free theorem is not available in the same generality, so the
construction of flips in dimension three and higher requires additional
structure (such as separatrices, meromorphic first integrals, or complements).

\medskip 

The aim of this survey is to give an accessible introduction to these ideas and
to highlight some of the recent progress in the birational geometry of
foliations, with an emphasis on the MMP perspective.  A recurring theme is
that, under appropriate singularity assumptions, $K_{\mathcal F}$ behaves
surprisingly like the canonical divisor of a mildly singular variety, while the
places where the analogy fails are often the source of interesting geometry.

\medskip

We pose two central questions which we hope motivate this perspective:

\begin{enumerate}
\item \emph{Bend and break for foliations.}
If there exists a curve $C \subset X$ with $K_{\mathcal F}\cdot C<0$, does
there exist a rational curve $\Sigma$ with $K_{\mathcal F}\cdot \Sigma<0$?
To what extent can $\Sigma$ be chosen to be ``oriented'' with respect to the
foliation (for instance tangent to $\mathcal F$, or contained in an invariant
subvariety)?

\item \emph{Existence of minimal models.}
Assume that $K_{\mathcal F}$ is pseudo-effective.  Does there exist a
$K_{\mathcal F}$-negative birational map $X\dashrightarrow X'$ such that for
the induced foliation $\mathcal F'$ on $X'$ the divisor $K_{\mathcal F'}$ is
nef?  More generally, under what hypotheses can one run a $K_{\mathcal F}$-MMP and what can be said about the output?
\end{enumerate}

\medskip

A key technical tool underlying many of the developments is adjunction for
foliations.  Given a divisor $D$ on $X$, one can restrict the foliation to the
normalisation of $D$ and obtain a foliation $\mathcal F_D$ together with an
effective correction term $\Diff_D(\mathcal F)\ge 0$ (the different)
satisfying a linear equivalence of the form
\[
(K_{\mathcal F}+\epsilon(D)D)\big|_D \;\sim\;
K_{\mathcal F_D}+\Diff_D(\mathcal F),
\]
see Theorem~\ref{t_adjunction}.  This formula plays for foliations much the same
role that adjunction plays for pairs in the usual MMP: it allows one to
translate numerical negativity on $X$ into concrete restrictions on 
subvarieties, and it provides a mechanism to compare singularities on $X$ and on
$D$ (cf.\ Theorem~\ref{thm_adj_sing}).  In particular, on surfaces it gives a
clean route to McQuillan's Minimal Model Program \cite{McQuillan08}, and in
dimension three it is one of the main inputs in the construction of foliated
flips.

\medskip

Our intention is not to be exhaustive, but rather to isolate a small collection
of ideas that have proved robust across several settings, and to
record a number of open problems that seem to us to be natural next targets.  

We also remark that there has recently been much substantial progress in understanding the birational geometry of foliations with algebraic leaves, and many new and exciting techniques have been developed in this setting. We will not say much about this here, but we refer to \cite{ACSS, LMX25, CS23a, chen2023mmp, m7a, m7b} to give an indication of the extent to which the field has been developing. 

It is also important to mention the large amount of work on classifying foliations of ``special type'', e.g., foliations such that $-K_{\mathcal F}$ is nef.  Again, we will not have the space to survey these developments here, but we refer the reader to \cite{MR3033631, MR3842065, Druel21, MR4910598}
for a small selection of the work done in this direction.

\medskip

\noindent\textbf{Organisation of the paper.}
In \S\ref{s_preliminaries} we recall the definitions and some basic results of some of the  objects used in the paper.
In \S\ref{s_singularities} we survey the singularities that naturally appear in the foliated MMP.
 In \S\ref{s_adjunction} we develop adjunction for foliations, and
explain how it controls the interaction between the singularities of the
foliation and the geometry of divisors.
In \S\ref{s_mmp2} we explain how these tools give a streamlined approach to the MMP for
 foliations over surfaces, recovering the main features of McQuillan's program and
some structural results in the case of general type, including the
appearance of elliptic Gorenstein leaves and the resulting failure of finite
generation in general.
Finally, in \S\ref{s_flips} we discuss some aspects of the MMP in higher dimensions, with a
focus on  flips on threefolds.  

\medskip

\noindent\textbf{Acknowledgements.} 
We would like to thank Andr\'{e} Belotto da Silva, Federico Bongiorno, St\'ephane Druel, Jihao Liu, James M\textsuperscript cKernan, Michael McQuillan, Jorge Pereira, and Roberto Svaldi for several 
discussions on the content of these notes. 
The first author is partially funded by a Simons Collaboration Grant and the second author is partially funded by EPSRC.
This survey was written for a volume arising from the Summer Research Institute in Algebraic Geometry held at Colorado State University in 2025, and we are grateful to the organisers for the invitation.
We would also like to thank the referee for carefully reading the paper
and for several useful suggestions and corrections.

\section{Preliminaries}\label{s_preliminaries}

We work over the field of complex numbers $\mathbb C$.

\subsection{Derivations and vector fields}\label{s_derivations}
Let $X$ be a normal variety. A {\bf derivation} of $X$ is a $\mathbb C$--linear map of sheaves
$\partial\colon \mathcal O_X \;\longrightarrow\;\mathcal O_X$
such that for every open set \(U\subset X\) and all \(f,g\in \mathcal O_X(U)\), the {\bf Leibniz rule} holds:
\[
\partial(fg) = \partial(f)g + f\partial(g).
\] 
We denote by $\Omega^1_X$ the sheaf of K\"ahler differentials of $X$ and, for any positive integer $r$, we let $\Omega^{[r]}_X:= (\Omega^r_X)^{**}$. We also define $T_X := (\Omega^1_X)^*$ to be the tangent sheaf of $X$.
A {\bf vector field} $V$ on $X$ is a  section of the tangent sheaf  and, in particular, it induces an $\mathcal O_X$-linear morphism
$V\colon \Omega^1_X\to \mathcal O_X$. Composing with the universal derivation
$d:\mathcal O_X\to \Omega^1_X$ we obtain a derivation
\[
\partial_V:\mathcal O_X \longrightarrow \mathcal O_X, \qquad
\partial_V(f):=V(df),
\]
since $d(fg)=f\,dg+g\,df$ and $V$ is $\mathcal O_X$-linear. Conversely, any derivation $\partial\colon \mathcal O_X\to \mathcal O_X$ uniquely factors through
$d$ by the universal property of $\Omega^1_X$, and therefore it  induces a section $V_\partial$
of $T_X$.

\medskip

Let $X$ be a normal variety and let $\partial$ be a derivation on $X$. The \textbf{singular locus} of $\partial$ is defined as 
\[
\sing \partial\ :=\ \bigl\{x\in X  \mid \partial(\mathcal O_{X, x}) \subset \mathfrak m_x \bigr\},
\]
 where $\mathfrak m_x \subset \mathcal O_{X, x}$ denotes the maximal ideal at $x$. Consider an embedding $X \subset \mathbb C^N$ and let $I_X$ be the ideal of $X$. A {\bf lift} of $\partial$ is a derivation $\tilde{\partial}$ on $\mathbb C^N$ such that $\tilde {\partial} (I_X)\subset I_X$ and 
 $\partial (f|_X)=\tilde{\partial}(f)|_X$ for any open set $U\subset \mathbb C^N$ and $f\in \mathcal O_{\mathbb C^N}(U)$. 
Then, for any $x\in X$, we have that $x\in \sing \partial$ if and only if $x\in \sing \tilde{\partial}$. 
 
Let
 $V_\partial$ be the
 vector field associated to $\partial$. If $X$ is smooth then $\sing \partial$ coincides with the zero locus of the section $V_\partial$, i.e.
\[
\sing \partial\ :=\ \bigl\{x\in X \mid (V_\partial)_x =0 \text{ in } (T_X)_x\otimes \mathbb C(x)\bigr\}.
\]
On the other hand, in general this equality does not hold as the following example shows: 
\begin{example}
Let $X=\{xy-z^2=0\}\subset \mathbb C^3$ and let  $x_0=(0,0,0)\in X$. Consider the derivation
\[
\widetilde\partial \ :=\ x\partial_x-y\partial_y\quad\text{on }\mathbb C^3.
\]
Since $\widetilde\partial(xy-z^2)\in(xy-z^2)$, it induces a derivation $\partial$ of $\mathcal O_X$ such that  $\partial(\mathcal O_{X,x_0})\subset \mathfrak m_{x_0}$. But,
 $\partial$ is \emph{not} zero in $T_X\otimes \mathbb C(x_0)$. Indeed, 
 one can verify that near $x_0$ the sheaf $T_X$ is generated by $x\partial_x-y\partial_y, ~ 2x\partial_x+z\partial_z$ and $~2y\partial_y+z\partial_z$ and that 
 $\dim_{\mathbb C}T_X\otimes \mathbb C(x_0) = 3$, hence $x\partial_x-y\partial_y$ is non-zero in $T_X\otimes \mathbb C(x_0)$.
\end{example}

\subsection{Foliations}\label{s_foliations}
Let $X$ be a normal   variety. 
A {\bf foliation } $\mathcal F$ of rank $r$ on $X$ is a rank $r$ coherent subsheaf $T_{\mathcal F}\subseteq T_X$
which is 
\begin{enumerate}
\item[(i)] closed under Lie bracket, i.e. the map 
$$\wedge^2 T_{\mathcal F}\to T_X/T_{\mathcal F}\qquad v\wedge w \mapsto [v,w]$$
is zero, and 
\item[(ii)] saturated, i.e., the {\bf normal sheaf} $\mathcal N_{\mathcal F}:= T_X/T_{\mathcal F}$ is torsion free.
\end{enumerate}
We denote by $\omega_{\mathcal F}:=(\wedge^r T_{\mathcal F})^{*}$ the {\bf canonical sheaf} of $\mathcal F$. A {\bf canonical divisor} of $\mathcal F$ is any Weil divisor $K_{\mathcal F}$ on $X$ such that $\mathcal O_X(K_{\mathcal F})\simeq \omega_{\mathcal F}$.

\medskip 

Below we collect some basic examples of foliations that will be useful
throughout the paper:

\begin{example}\label{example_first}
Let $X$ be a normal variety.

\begin{enumerate}
\item The equality $T_{\mathcal F}=T_X$ defines the trivial foliation, for which
$K_{\mathcal F}=K_X$.

\item Any vector field $V\in H^0(X,T_X)$ defines a rank one foliation on $X$.

\item Let $X$ and $Y$ be normal varieties. A {\bf fibration}
$f\colon X\to Y$ is a surjective morphism with connected fibres. It is easy
to check that $f$ induces a foliation defined by
\[
T_{\mathcal F}:=\ker\bigl(T_X\to f^*T_Y\bigr).
\]
Assume that $f$ is equidimensional and $Y$ is $\mathbb Q$-factorial. Then
\[
K_{\mathcal F}=K_{X/Y}+\sum (1-\ell_D)D,
\]
where the sum runs over all prime divisors $D$ on $X$ such that $f(D)$ is a
divisor on $Y$, and for any prime divisor $P$ on $Y$ we write
$f^*(P)=\sum \ell_D D$ (see, for example,
\cite[Notation~2.7 and \S2.9]{Druel15b}).

\item More generally, let $f\colon X\to Y$ be a fibration between normal
varieties and let $\mathcal G$ be a foliation on $Y$. We define a foliation
$\mathcal F$ on $X$ by
\[
T_{\mathcal F}:=\ker\bigl(T_X\to f^*T_Y\to f^*\mathcal N_{\mathcal G}\bigr).
\]
Then $\mathcal F$ has rank $\rk \mathcal F=\dim X -\dim Y+\rk \mathcal G$, and the
previous example corresponds to the case $T_{\mathcal G}=0$. We call
$\mathcal F$ the {\bf pullback foliation} of $\mathcal G$ along $f$ and we denote it by $f^{-1}\mathcal G$.
Even more generally, given a dominant rational map
$f\colon X\dashrightarrow Y$ and a foliation $\mathcal G$ on $Y$, one can
define the pullback foliation $\mathcal F=f^{-1}\mathcal G$ along $f$ on $X$. 
In particular,
any rational dominant map $f\colon X\dashrightarrow Y$ induces a foliation by
pulling back the foliation $T_{\mathcal G}=0$ on $Y$ along $f$. Any foliation arising
in this way is called {\bf algebraically integrable}.

\item Let $A=\mathbb C^2/\Lambda$ be an abelian surface, where
$\Lambda\subset \mathbb C^2$ is a lattice, and let $v\in\mathbb C^2$ be a
nonzero vector. Then $v$ defines a foliation
\[
\mathcal O_A=:T_{\mathcal F}\longrightarrow
T_A\simeq \mathcal O_A^{\oplus 2}.
\]
It is easy to check that this foliation is algebraically integrable if and only if
$\Lambda\cap \mathbb C v\simeq \mathbb Z^{\oplus 2}$.
\end{enumerate}
\end{example}

Assume for now that $X$ is a smooth variety and $\mathcal F$ is a foliation on $X$. The {\bf singular locus} of $\mathcal F$ is 
$$\sing \mathcal F:=\{ x\in X\mid \mathcal N_{\mathcal F}\text{ is not locally free at $x$ } \}.$$
Since by assumption $T_X/T_{\mathcal F}$ is torsion free,  it follows that $\sing \mathcal F$ has codimension at least two: this is similar to saying that the singular locus of any normal variety has codimension at least two. 

More importantly, if $\mathcal F$ is a foliation of rank $r$ on a normal variety $X$ then 
by {\bf Frobenius' theorem} (e.g. see \cite[Appendix \S 2]{CN85}), the fact that $T_{\mathcal F}$ is closed under Lie bracket implies that for any $x\in X\setminus (\sing X\cup \sing \mathcal F)$ there exist an analytic open neighbourhood $x\in U$ and a morphism $\phi\colon U\to \mathbb  C^q$ where $q=\dim X-r$, such that $\mathcal N_{\mathcal F}|_U=\phi^*T_{\mathbb C^q}$ and $T_{\mathcal F}|_U$ is the relative tangent bundle. 
A {\bf leaf} of $\mathcal F$ is an immersed analytic subvariety of $X$ which is locally a union of fibres of $\phi$.

\subsection{Pfaff fields}
Let $X$ be a normal variety and let $\mathcal F$ be a rank $r$ foliation
on $X$.  We can associate to $\mathcal F$ a morphism
\[\phi\colon \Omega^{[r]}_X \rightarrow \mathcal O_X(K_{\mathcal F})\]
defined by taking the double dual of the  $r$-wedge product of the map $\Omega^{[1]}_X\to T_{\mathcal F}^*$, induced by the inclusion
$T_{\mathcal F} \subset T_X$.  We will call $\phi$ the {\bf Pfaff field} (or {\bf Pfaffian map}) associated to $\mathcal F$.
Following \cite[Definition 5.4]{Druel21}, we define the {\bf twisted Pfaff field} 
as the induced map
\[\phi'\colon (\Omega^{[r]}_X \otimes \mathcal O_X(-K_{\mathcal F}))^{**} \rightarrow \mathcal O_X\]
and we define the {\bf singular locus} of $\mathcal F$,   denoted by $\sing \mathcal F$, to be the cosupport of the image of $\phi'$.
We say that $\mathcal F$ is {\bf smooth} at a closed point $x\in X$ if $x\notin \sing \mathcal F$ and we  say that $\mathcal F$ is a smooth foliation if $\sing \mathcal F$ is empty.
(We remark that what we call smooth is referred to as ``weakly regular'' in \cite{Druel21}.)

If $X$ is smooth, then the definition coincides with the definition in \S \ref{s_foliations}. Similarly, if $X$ is smooth and $\mathcal F$ is defined by a derivation $\partial$, 
then $\sing \mathcal F = \sing \partial$, as defined in \S \ref{s_derivations}. However, in general, it is unclear if these two notions of singularity coincide.  If $X$ is klt then this is proven in \cite[Proposition 2.32]{CS20}.
In general, we may ask:

\begin{question} 
    Do  these notions of singularity coincide on normal varieties?
\end{question}
Note that the above question is open even in the case of surfaces. 
Surprisingly, we have: 
\begin{example}\label{e_F=T}
 Let $X$ be a normal variety and assume  $T_{\mathcal F}=T_X$ then $\mathcal F$ is smooth, indeed
\[(\mathcal O_X(K_X) \otimes \mathcal O_X(-K_{X}))^{**} \simeq \mathcal  O_X\]
\end{example}

The following Lemma implies that the Pfaffian of a foliation uniquely determines the foliation: 

\begin{lemma}
\label{lem_linear_algebra}
Let $k$ be a field and let $V$ be a $k$-vector space of dimension $n$.  Fix an integer $0<r\leq n$ and let $L \subset \bigwedge^r V$ be a one 
dimensional subspace spanned by a non-zero vector $w$.
Consider the map $\alpha\colon V \to \bigwedge^{r+1} V$ defined by $v \mapsto v\wedge w$.
Then:
\begin{enumerate}
\item $\bigwedge^r \ker \alpha \subset L$.  In particular, if $\dim \ker \alpha = r$ then $\bigwedge^r \ker \alpha = L$.

\item If $W\subset V$ is a $r$-dimensional subspace such that $\wedge^r W=L$ then $W=\ker \alpha$.
\end{enumerate}
\end{lemma}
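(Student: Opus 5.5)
The plan is to pass to a basis adapted to $K:=\ker\alpha$ and use that wedge products of distinct basis vectors form a basis of $\bigwedge^\bullet V$. The first step is a structural claim: if $v_1,\dots,v_s$ is a basis of $K$, then $s\le r$ and $w=v_1\wedge\dots\wedge v_s\wedge u$ for some $u\in\bigwedge^{r-s}V$. I will prove this by induction on $s$. Extend $v_1,\dots,v_s$ to a basis $v_1,\dots,v_n$ of $V$ and expand $w$ in the basis $\{v_I\}_{|I|=r}$. The condition $v_i\wedge w=0$ kills exactly the coefficients of those $v_I$ with $i\notin I$. Hence every multi-index $I$ with nonzero coefficient contains $\{1,\dots,s\}$. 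Since $w\neq 0$, at least one coefficient survives, so $s\le r$, and factoring out $v_1\wedge\dots\wedge v_s$ gives the claim.

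For (1), there are two cases. If $s<r$, then $\bigwedge^r K=0\subset L$ trivially. If $s=r$, the claim gives $w=c\,v_1\wedge\dots\wedge v_r$ with $c\in k$, and $c\ne0$ because $w\ne0$. Since $\bigwedge^r K$ is spanned by $v_1\wedge\dots\wedge v_r$, we get $\bigwedge^r K=L$. This handles both statements of (1) at once, since the second is just the case $\dim K=r$.

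For (2), take a basis $w_1,\dots,w_r$ of $W$. Then $w=c\,w_1\wedge\dots\wedge w_r$ with $c\ne0$, so $w_i\wedge w=0$ for each $i$ and therefore $W\subset K$. It remains to prove $K\subset W$. Extend to a basis $w_1,\dots,w_n$ of $V$ and let $v=\sum a_jw_j\in K$. Then
\[
0=v\wedge w=c\sum_{j>r}a_j\,w_j\wedge w_1\wedge\dots\wedge w_r .
\]
The terms $w_j\wedge w_1\wedge\dots\wedge w_r$ for $j>r$ are distinct basis elements of $\bigwedge^{r+1}V$, so $a_j=0$ for all $j>r$. Hence $v\in W$ and $W=K$. (One could instead note that $\dim K\le r$ by the claim, and $W\subset K$ then forces equality.)

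The only real content is the coefficient bookkeeping in the first step: checking that $v_i\wedge v_I$ is, up to sign, a basis vector when $i\notin I$ and is zero when $i\in I$, so that distinct $I$ contribute independently. This is standard, so I do not expect a genuine obstacle.
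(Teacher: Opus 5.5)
Your proof is correct and is essentially the paper's argument. The paper only records that $v\wedge w=0$ with $v\neq 0$ forces $w=w_1\wedge v$ for some $w_1\in\bigwedge^{r-1}V$, and says the lemma follows easily; your adapted-basis computation proves the simultaneous factorisation $w=v_1\wedge\dots\wedge v_s\wedge u$ over a whole basis of $\ker\alpha$, which is exactly the detail that step needs (the announced induction on $s$ is superfluous, since your coefficient argument treats all $v_i$ at once).
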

\begin{proof}
Note that if $v\in V$ is a non-zero vector such that $v\wedge w=0$, then there exists $w_1\in \bigwedge^{r-1} V$ such that $w= w_1\wedge v$. Thus, the Lemma follows easily. 
\end{proof}






\subsection{Invariant subvarieties}

Let $X$ be a normal variety and let $\partial$ be a derivation on $X$.
We say that an ideal $I \subset \mathcal O_X$ (equivalently, the subscheme $Y$ defined by $I=:I_Y$) is {\bf invariant} by $\partial$ provided $\partial(I) \subset I$.
More generally, if $\mathcal F$ is a foliation we say that $I$ (or $Y$ if $I=I_Y$) is {\bf $\mathcal F$-invariant} (or invariant by $\mathcal F$) if $\partial(I) \subset I$ for all local sections $\partial \in T_{\mathcal F}$.  
We remark that some care needs to be taken with this definition near the singular locus of $X$ and 
so, in some places (e.g. \cite{CS23b}), invariance of a variety by a foliation includes a local freeness condition on the canonical sheaf of the foliation in a neighbourhood of the generic point of $Y$. 

\begin{lemma}\label{l_invariant}
Let $X$ be a normal scheme, let $\mathcal{F}$ be a rank $r$ foliation on $X$ 
and let $W \subset X$ be a subvariety and assume that $T_{\mathcal F}$ is locally free in a neighbourhood of the generic point of $W$.

Then, in a neighbourhood of the generic point of $W$, the following are equivalent:
\begin{enumerate}
\item $W$ is $\mathcal F$-invariant;
\item there exists a decomposition
\begin{center}
\begin{tikzcd}
\Omega^r_X\vert_W \arrow[r, "\phi"] \arrow[d] & \mathcal O_W(K_{\mathcal F})\\
\Omega^r_W \arrow[ur] & 
\end{tikzcd};
\end{center}
and
\item the map $T_{\mathcal F}|_W\to {T_X}|_W$ factors through $T_W$. 
\end{enumerate}
\end{lemma}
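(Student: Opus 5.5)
Since every condition is local around the generic point of $W$, we shrink $X$ to an affine neighbourhood of that point and replace $W$ by a dense open subset. After this reduction $X$ and $W$ are smooth along $W$ (the generic point of $W$ is a smooth point of the reduced subvariety $W$, and we may work on the regular locus), and $T_{\mathcal F}$ is free with a basis $\partial_1,\dots,\partial_r$. Dualising, $\mathcal O_X(K_{\mathcal F})$ is free with generator $(\partial_1\wedge\dots\wedge\partial_r)^*$. The Pfaff field then takes the explicit form
\[
\phi(\omega)=\omega(\partial_1,\dots,\partial_r)\cdot(\partial_1\wedge\cdots\wedge\partial_r)^* .
\]
We also use the conormal sequence $I_W/I_W^2\to\Omega^1_X|_W\to\Omega^1_W\to 0$, whose first map is $f\mapsto df$.

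\emph{(1)$\Leftrightarrow$(3).} The map $T_{\mathcal F}|_W\to T_X|_W=\mathcal{H}om(\Omega^1_X|_W,\mathcal O_W)$ sends $\partial$ to $df\mapsto \partial(f)|_W$. It factors through $T_W=\mathcal Hom(\Omega^1_W,\mathcal O_W)$ exactly when each restricted derivation kills the image of $I_W/I_W^2$, that is, when $\partial(f)|_W=0$ for all $f\in I_W$. This says precisely that $\partial(I_W)\subset I_W$ for every local section $\partial$ of $T_{\mathcal F}$, which is condition (1). Here we use that $W$ is reduced and $\mathcal O_W$ is torsion free, so that vanishing in $\mathcal O_W$ is the same as membership in $I_W$.

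\emph{(3)$\Rightarrow$(2).} If the $\partial_i|_W$ lie in $T_W$, then $\omega|_W\mapsto \omega(\partial_1,\dots,\partial_r)$ only depends on the image of $\omega$ in $\Omega^r_W$. So $\phi|_W$ factors through $\Omega^r_W$.

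\emph{(2)$\Rightarrow$(3).} This is the main step, and we use Lemma~\ref{lem_linear_algebra} at the generic point $\eta$ of $W$. Set $V=T_X\otimes k(\eta)$, and let $w=\partial_1\wedge\dots\wedge\partial_r\in\bigwedge^rV$, which is the element dual to $\phi$ restricted to $\eta$. We first need $w\neq 0$. If $w=0$, then the images of the $\partial_i$ span a subspace of dimension less than $r$, and this is compatible with every condition; we handle this degenerate case separately, by noting that we must then show the span lies in $T_W$. To do so we use Lemma~\ref{lem_linear_algebra}(2) only on the saturation, or, better, we argue directly as follows. Condition (2) says that $\omega(\partial_1,\dots,\partial_r)|_W=0$ whenever $\omega\in dI_W\wedge\Omega^{r-1}_X$. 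Take $\omega=df\wedge dx_{j_2}\wedge\dots\wedge dx_{j_r}$ for local coordinates $x_j$, and expand the determinant along its first row. The vanishing of all these minors says that for every $f\in I_W$ the vector $(\partial_i f)_i$ lies in the span of the coefficient vectors of the $\partial_i$ with respect to $dx_{j_2},\dots,dx_{j_r}$.

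When $w\ne0$ at $\eta$, this gives $df\wedge w=0$ in $\bigwedge^{r+1}$ of the cotangent side, after identifying via the contraction. Dually, the $r$-dimensional span $U$ of the $\partial_i(\eta)$ satisfies $U\subset\ker(df)$ for all $f\in I_W$, so $U\subset T_W\otimes k(\eta)$. We then spread this out over a neighbourhood, using freeness of $T_{\mathcal F}$ and torsion-freeness of $\mathcal O_W$.

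The obstacle is the degenerate case in which $T_{\mathcal F}\otimes k(\eta)\to T_X\otimes k(\eta)$ is not injective, that is, $W\subset\sing\mathcal F$. There, condition (2) holds trivially while invariance can fail. An example is the foliation $x\partial_y$ with $W=\{y=0\}$... actually $W=\{x=0\}$ is not invariant: wait, there $\partial_y$ preserves $(x)$. I would therefore check whether the statement implicitly assumes $W\not\subset\sing\mathcal F$. If it does not, I would restrict to that case, or interpret (2) scheme-theoretically, by requiring the factorisation over a neighbourhood of $W$ rather than only along it.
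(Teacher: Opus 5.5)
Your proposal is correct in substance and does more than the paper. The paper writes out only (1)$\Rightarrow$(2), using the identity $\ker(\Omega^r_X|_W\to\Omega^r_W)=(dI_W\wedge\Omega^{r-1}_X)|_W$ and the remark that invariance forces $\phi(df\wedge\beta)$ to vanish on $W$; it leaves the remaining implications as an exercise. Your (3)$\Rightarrow$(2) is the same computation, and your (1)$\Leftrightarrow$(3) (via the conormal sequence) and (2)$\Rightarrow$(3) (linear algebra at $\eta$) supply the omitted directions.

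Your suspicion about the degenerate case is justified: as literally stated, (2)$\Rightarrow$(1) is false when $W\subset\sing\mathcal F$. Your example does not show this, though. $x\partial_y$ is not saturated, and no rank one example can work: if $\mathcal F$ has rank one and $W\subset\sing\mathcal F$, then the generator satisfies $\partial(\mathcal O_X)\subset I_W$ near $\eta$, so $W$ is automatically invariant. A correct example needs $r\ge 2$. On $X=\mathbb C^3$ take the foliation defined by $x\,dy-y\,dx$, so that $T_{\mathcal F}$ is freely generated by $\partial_z$ and $x\partial_x+y\partial_y$, and let $W$ be the origin. The bivector $\partial_z\wedge(x\partial_x+y\partial_y)=x\,\partial_z\wedge\partial_x+y\,\partial_z\wedge\partial_y$ has coefficients vanishing at $0$, so $\phi|_W=0$ and (2) holds. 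But $\partial_z(z)=1\notin I_W$, so (1) and (3) fail. The correct statement therefore needs the hypothesis $W\not\subset\sing\mathcal F$ for (2)$\Rightarrow$(1). This hypothesis is automatic when $W$ is a prime divisor on a normal variety, which is the case needed in the paper (Theorem \ref{t_adjunction}).

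Two points in your write-up need repair.

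First, you cannot in general shrink to a neighbourhood where $X$ is smooth along $W$. If $W\subset\sing X$ (e.g.\ $W$ a singular point of $X$), the identification $T_X|_W=\mathcal{H}om(\Omega^1_X|_W,\mathcal O_W)$ and the coordinates $x_j$ are unavailable. The paper's proof of (1)$\Rightarrow$(2) uses no smoothness, and you can avoid it too:
\begin{itemize}
\item read (3) inside $\mathcal{H}om(\Omega^1_X|_W,\mathcal O_W)$;
\item run the linear algebra in $V=\mathrm{Hom}(\Omega^1_X\otimes k(\eta),k(\eta))$, where $\phi_\eta$ corresponds to $w=\bar\partial_1\wedge\cdots\wedge\bar\partial_r$, and $w\neq 0$ is exactly $\eta\notin\sing\mathcal F$ in the Pfaff-field sense.
\end{itemize}

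Second, condition (2) at $\eta$ is not ``$df\wedge w=0$''. As written that is a type mismatch, since $df$ is a covector and $w$ a multivector. Lemma \ref{lem_linear_algebra} is also not the relevant statement. What (2) gives is that $\beta\mapsto\phi_\eta(df\wedge\beta)$ vanishes, i.e.\ the contraction $\iota_{df}w\in\bigwedge^{r-1}V$ is zero. Now
\[
\iota_{df}w=\sum_i(-1)^{i-1}df(\bar\partial_i)\,\bar\partial_1\wedge\cdots\widehat{\bar\partial_i}\cdots\wedge\bar\partial_r ,
\]
and for $w\neq 0$ these $(r-1)$-vectors are linearly independent. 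Hence $\partial_i(f)\in\mathfrak m_\eta$ for all $i$. Applying this to finitely many generators of the prime ideal $I_W$ and using the Leibniz rule gives $\partial(I_W)\subset I_W$ near $\eta$.
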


\begin{proof}
Let $I_W$ be the ideal of $W$. 
Everything is local about the generic point of $W$, so we may freely replace $X$ 
by a neighbourhood of the generic point of $W$ and therefore we may assume that 
$T_{\mathcal{F}}$ is locally free and that  $\partial_1, \ldots, \partial_r$ 
are generators of $T_{\mathcal{F}}$.

We prove that (1) implies (2).  Since $W$ is $\mathcal F$-invariant, we have 
 that $\partial_i(I_W) \subset I_W$ for all $i=1,\dots,r$. Thus, 
$(\partial_1 \wedge \cdots \wedge \partial_r)(df \wedge \beta)$
vanishes along $W$ for any $f \in I_W$ and any $(r-1)$-form $\beta$. 
In particular,
\[
\ker \big(\Omega_X^r\big|_W \to \Omega_W^r\big) =
(dI_W \wedge \Omega_X^{r-1})\big|_W  
\subset \ker \big(\Omega_X^r\big|_W \to \mathcal{O}_W(K_{\mathcal{F}})\big).
\]
This implies that in a neighbourhood of the generic point of $W$, the morphism 
$\Omega_X^r\big|_W \to \mathcal{O}_W(K_{\mathcal{F}})$
factors through $\Omega_X^r\big|_W \to \Omega_W^r$, as claimed.

\medskip 

The rest of the proof is easy and it is left as an exercise. 
\end{proof}

\begin{definition}
    Let $\mathcal F$ be a foliation on a normal variety $X$ and let $W\subset X$ be a prime divisor. We introduce the indicator function $\epsilon(W)$
by defining 
\[
\epsilon(W) :=
\begin{cases} 0 ~ \text{if $W$ is $\mathcal F$-invariant} \\ 1 \text{ if $W$ is not $\mathcal F$-invariant.}
\end{cases}
\]
\end{definition}



\subsection{Resolution vs. reduction}

The first thing to note about foliation singularities is that it is not possible to resolve a singular foliation to a smooth one by a sequence of blow-ups.  Indeed, it is worth comparing the problem 
of resolving the singularities of a morphism with resolving the singularities of a foliation.  It is not possible to ``resolve'' a singular morphism to a smooth one by blow-ups, rather the best one can hope to achieve is toroidalisation (or monomialisation) of the morphism by blow-ups (cf. Theorem \ref{t_AK}).  Similarly, the best one can hope for is to achieve the reduction (as opposed to resolution) of singularities
of the foliation to some distinguished class of foliation singularities.
However, it is unclear what exactly this distinguished class of singularities should be in general, and the current best results on reduction of singularities are only known in dimension at most three. We refer to \cite{MR220710, CC92, cano04, Panazzolo, MP13, ABTW25} for some of the state of the art along these lines.

\subsection{Blowing-up vector fields}
As the following example shows, blowing-up smooth foliation at a point will typically make the transformed foliation singular:

\begin{example}
\label{ex_blow_up}
Let $X=\mathbb C^n$ and let $q=(0,\dots,0)$. Consider the blow-up $p\colon Y\to X$ of $X$ at $q$. In a chart, we may write
$$p(u_1,\dots,u_n)=(u_1,u_1\cdot u_2,\dots,u_1\cdot u_n)=(x_1,\dots,x_n).$$
Thus, on the open set $\{x_1\neq 0\}$, we may write 
$$p^{-1}(x_1,\dots,x_n)
=\Bigl(x_1,\;\tfrac{x_2}{x_1},\;\dots,\;\tfrac{x_n}{x_1}\Bigr)$$
and the Jacobian of $p^{-1}$ is given by 
$$J\bigl(p^{-1}\bigr)
=
\begin{pmatrix}
1 & -\dfrac{x_2}{x_1^2} & \cdots & -\dfrac{x_n}{x_1^2} \\[1ex]
0 & \dfrac{1}{x_1}       &        & 0                 \\[1ex]
\vdots &                   & \ddots & \vdots           \\[1ex]
0 & 0                   &        & \dfrac{1}{x_1}
\end{pmatrix}$$
Thus, 
$$J(p^{-1})|_{p(u_1,\dots,u_n)}=
\begin{pmatrix}
1 & -\dfrac{u_2}{u_1} & \cdots & -\dfrac{u_n}{u_1} \\[1ex]
0 & \dfrac{1}{u_1}       &        & 0                   \\[1ex]
\vdots &                   & \ddots & \vdots             \\[1ex]
0 & 0                   &        & \dfrac{1}{u_1}
\end{pmatrix}.
$$
Since $p^*\partial = \partial \circ J(p^{-1})$, 
it follows that 
\begin{align*}
p^*\!\bigl(\partial_{x_1}\bigr)
&=
\partial_{u_1}
-\frac{1}{u_1}\bigl(u_2\partial_{u_2}+\cdots+u_n\partial_{u_n}\bigr),\\[1ex]
p^*\!\bigl(\partial_{x_i}\bigr)
&=
\frac{1}{u_1}\,\partial_{u_i}
\quad \text{for }i=2,\dots,n.
\end{align*}
Note that if  $\partial = \partial_{x_1}$ then $\partial$ is smooth and 
$$p^*(\partial_{x_1}\bigr)
=\frac 1 {u_1} \partial_Y$$
where 
$$\partial_Y=u_1 \partial_{u_1} - (u_2\partial_{u_2}+\cdots+u_n\partial_{u_n}).$$
In particular, $\partial_Y$ is not smooth at $(0,\dots,0)$. 
\end{example}

\subsection{Riemann--Hurwitz formula}

\begin{proposition}[Riemann--Hurwitz formula \cite{Druel21}]
\label{p_RH}
Let $\sigma \colon Y \to X$ be a finite surjective morphism between normal varieties, 
let $\mathcal{F}$ be a foliation on $X$ such that $K_{\mathcal F}$ is $\mathbb Q$-Cartier and let $\mathcal{G} := \sigma^{-1}\mathcal F$ be the pullback foliation (cf. Example \ref{example_first}).

Then
\begin{enumerate}
\item for any prime divisor $D$ on $Y$ we have $\epsilon(\sigma(D)) = \epsilon(D)$; and 
    \item we may write
\[
K_{\mathcal{G}} \;=\; \sigma^* K_{\mathcal{F}} \;+\; \sum \epsilon(\sigma(D))(r_D - 1)D
\]
\end{enumerate}
where the sum runs over all the prime divisors on $Y$ and $r_D$ is the ramification index of $\sigma$ along $D$.  
In particular, if every ramified divisor is $\mathcal{G}$-invariant then 
\[
K_{\mathcal{G}} = \sigma^* K_{\mathcal{F}}.
\]
\end{proposition}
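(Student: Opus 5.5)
Both statements concern Weil divisors on normal varieties, so we may work in codimension one. We remove from $X$ a closed subset of codimension at least two, and its preimage from $Y$; this changes neither side of the formula. We may therefore assume that $X$ and $Y$ are smooth, that $\sigma$ is flat, that $\mathcal F$ and $\mathcal G$ are smooth foliations, and that the branch locus is a smooth divisor. Both claims are then local at the generic point $\eta_D$ of a prime divisor $D\subset Y$ with image $E=\sigma(D)$. Away from the ramification divisor, $\sigma$ is étale and locally biholomorphic, and $\sigma^*$ identifies $T_{\mathcal F}$ with $T_{\mathcal G}$, so the formula holds there with no correction term. It remains to study the ramified divisors.

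Near $\eta_D$ we choose analytic coordinates $(y_1,\dots,y_n)$ on $Y$ and $(x_1,\dots,x_n)$ on $X$ with $\sigma(y)=(y_1^{m},y_2,\dots,y_n)$, where $m=r_D$, $D=\{y_1=0\}$ and $E=\{x_1=0\}$. We then apply Frobenius' theorem to $\mathcal F$ near a general point of $E$ and split into two cases.

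\begin{itemize}
\item \emph{$E$ not $\mathcal F$-invariant.} Then $\mathcal F$ is transverse to $E$ at a general point. Its leaves are locally the fibres of a submersion $\phi\colon U\to\mathbb C^q$ with $\phi|_E$ still submersive, so after a coordinate change preserving $E$ we may take $T_{\mathcal F}=\langle\partial_{x_1},\dots,\partial_{x_r}\rangle$. The coordinate change $(x_2,\dots,x_n)$ may be pulled back harmlessly. Since $\partial_{y_1}=m y_1^{m-1}\partial_{x_1}$, the foliation $\mathcal G$ is spanned by $\partial_{y_1},\partial_{y_2},\dots,\partial_{y_r}$, which are the saturation of the pullbacks. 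Hence $\sigma^*(\partial_{x_1}\wedge\cdots\wedge\partial_{x_r})=y_1^{-(m-1)}\,(\partial_{y_1}\wedge\cdots\wedge\partial_{y_r})$ up to a unit. Taking duals gives $K_{\mathcal G}=\sigma^*K_{\mathcal F}+(m-1)D$ near $\eta_D$. Moreover $\partial_{y_1}$ does not preserve $(y_1)$, so $D$ is not $\mathcal G$-invariant.

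\item \emph{$E$ $\mathcal F$-invariant.} Then $E$ is locally a union of leaves, so we may take $\phi=(x_1,\dots,x_q)$ with the first coordinate $x_1$, and $T_{\mathcal F}=\langle\partial_{x_{q+1}},\dots,\partial_{x_n}\rangle$. These vector fields lift isomorphically to $\partial_{y_{q+1}},\dots,\partial_{y_n}$. The resulting saturated subsheaf is $\mathcal G$, and it preserves $(y_1)$. Hence $D$ is $\mathcal G$-invariant and $K_{\mathcal G}=\sigma^*K_{\mathcal F}$ near $\eta_D$.
\end{itemize}

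The two cases together give part (1) and part (2) with coefficient $\epsilon(\sigma(D))(r_D-1)$. The final assertion follows immediately: if every ramified divisor is invariant, then every term with $r_D>1$ has $\epsilon(\sigma(D))=0$, so the correction term vanishes.

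The main obstacle is justifying that $\sigma^{-1}\mathcal F$ agrees, on the big open set, with the saturated sheaf spanned by these local lifts. Since $\sigma$ is only generically étale, the pullback foliation is defined via the function field: $T_{\mathcal G}$ is the saturation in $T_Y$ of the subsheaf that agrees with $\sigma^*T_{\mathcal F}$ over the étale locus. Saturation is determined at codimension-one points, so the local computation suffices. The other point to check is that the normal forms above can be attained simultaneously with the ramification normal form. I would do this by choosing the transverse coordinates of $\phi$ to be the pullbacks of coordinates on $E$, which works at a general point of $E$. The $\mathbb Q$-Cartier hypothesis on $K_{\mathcal F}$ is needed only so that $\sigma^*K_{\mathcal F}$ makes sense globally. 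The equality of Weil divisors is then checked in codimension one, where the local computation above applies.
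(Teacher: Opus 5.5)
Your proof is correct and has the same structure as the paper's. Both reduce to a general point of a prime divisor $D$, put $\sigma$ in the form $(y_1^m,y_2,\dots,y_n)$, and split according to whether $\sigma(D)$ is $\mathcal F$-invariant.

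The difference is which side you compute on. The paper works with the defining $q$-form, $q=\dim X-\rk\mathcal F$. In the invariant case it writes this form as $dx_1\wedge\alpha_1+x_1\alpha_2$; in the non-invariant case, as a nowhere vanishing form in $dx_2,\dots,dx_n$. It finds that the pullback vanishes to order $(1-\epsilon(\sigma(D)))(r_D-1)$ along $D$. Only then does it pass from $\det\mathcal N^*_{\mathcal G}$ to $K_{\mathcal G}$, using the classical Riemann--Hurwitz formula $K_Y=\sigma^*K_X+\sum(r_D-1)D$.

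You compare the generating $r$-vector fields directly. The term $\epsilon(\sigma(D))(r_D-1)D$ therefore appears immediately, and you need neither the classical formula nor the identity $K_X=K_{\mathcal F}+\det\mathcal N^*_{\mathcal F}$.

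The cost is that you use a full Frobenius chart adapted to $E=\sigma(D)$, and you must make it compatible with the ramification normal form. This works, though your phrasing of it is terse. At a point where $D\to E$ is \'etale, take \emph{any} coordinates $x_1,\dots,x_n$ with $E=\{x_1=0\}$. Then set $y_i=\sigma^*x_i$ for $i\ge 2$ and let $y_1$ be an $m$-th root of $\sigma^*x_1$.

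The paper's invariant-case normal form asks for less: it only uses that invariance forces the defining form into the shape $dx_1\wedge\alpha_1+x_1\alpha_2$. It also fits the Pfaff-field language used elsewhere in the paper.
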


\begin{proof}
Let $q=\dim X - \rk \mathcal F$. By the classic Riemann-Hurwitz theorem, to prove (2) it is enough to prove that
$$\wedge^q\mathcal N^*_{{\mathcal F}_Y}\simeq \sigma^*(\wedge^q \mathcal N^*_{{\mathcal F}}) \otimes \mathcal O_Y\left (\sum (1-\epsilon(\sigma (D)))(r_D - 1)D\right). $$

We just need to check the formula around the general point $\eta$ of a prime divisor $D$ on $Y$. Thus, we may assume that  $Y, \mathcal F_Y$ and $D$ are smooth at $\eta$  and that $X,\mathcal F$ and $\sigma(D)$ are smooth at $x:=\sigma(\eta)$. 
We can also assume that there are coordinates $(y_1,\dots,y_n)$ of $Y$ around $\eta$ such that 
$$\sigma(y_1,\dots,y_n)=(y_1^m,y_2,\dots,y_m)\qquad\text{and}\qquad D=\{y_1=0\}.$$

Suppose first that $\sigma(D)$ is $\mathcal F$-invariant. Then $\mathcal F$ is defined by a $q$-form 
\[
dx_1 \wedge \alpha_1 + x_1 \alpha_2,
\]
where $\alpha_1 \in \wedge^{q-1}(\mathcal{O}_{X,x}\, dx_2 \oplus \cdots \oplus \mathcal{O}_{X,x}\, dx_n)$ is nowhere vanishing and 
$\alpha_2 \in \Omega^q_X$. 
It follows that
\[
d\sigma (dx_1 \wedge \alpha_1 + x_1 \alpha_2) 
= y_1^{m-1} \bigl(m\, dy_1 \wedge d\sigma(\alpha_1) + y_1\, d\sigma (\alpha_2)\bigr)
\]
vanishes at order $m-1$ along $D$ and our claim follows. 

If $\sigma(D)$ is not $\mathcal{F}$-invariant, then $\mathcal{F}$ is given by a nowhere vanishing $q$-form 
$\alpha_3 \in \wedge^q(\mathcal{O}_{X,x}\, dx_2 \oplus \cdots \oplus \mathcal{O}_{X,x}\, dx_n)$. 
But then $d\sigma (\alpha_3)$ is a nowhere vanishing $q$-form and our claim follows.   We finally observe that (1) is an easy consequence of the local description  given above.
\end{proof}

\section{Singularities of foliations}
\label{s_singularities}

\subsection{Singularities of the MMP}

We start off by defining the discrepancy of a foliation:



\begin{definition} Let $\mathcal F$ be a foliation on a normal variety $X$ and let $\Delta\ge 0$ be an $\mathbb R$-divisor 
with coefficients in $(0,1]$
such that $K_{\mathcal F}+\Delta$ is $\mathbb R$-Cartier and let $\varphi\colon Y\to X$ be a birational morphism. 
Then we may write
$$K_{\mathcal F_Y}- \sum_E a(E,\mathcal F,\Delta)E=\varphi^*(K_{\mathcal F}+\Delta) $$
where 
$\mathcal F_Y=\varphi^{-1}\mathcal F$ is the pullback foliation  on $Y$, 
and the sum runs over all the prime divisors $E$ on $Y$.  The rational number $a(E,\mathcal F,\Delta)$ is called the {\bf discrepancy} of $(\mathcal F,\Delta)$ with respect to $E$. Note that it only depends on the valuation associated to $E$ and not on the choice of birational morphism $\varphi \colon Y\to X$. 

We say that  $(\mathcal F,\Delta)$ is {\bf terminal} (resp. {\bf canonical, log canonical}) if for all divisorial valuations $E$ (i.e. for any birational morphism $\varphi\colon Y\to X$) we have
$$a(E,\mathcal F,\Delta)> 0\qquad \text{resp. }  \ge 0, \quad \ge -\epsilon(E).$$
If $\Delta=0$, we will then just omit $\Delta$ in the notation. If $\partial$ is a derivation on $X$ such that $\sing\partial$ has codimension at least two, then we denote $a(E,\partial):=a(E,\mathcal F)$ where $\mathcal F$ is the foliation generated by $\partial$.
\end{definition}






Note that in \cite{MP13} what is called ``discrepancy'' is what we call ``log discrepancy'', i.e., $a(E, \mathcal F, \Delta)+\epsilon(E)$.


\begin{example}
Let $X=\mathbb C^n$ and let $q=(0,\dots,0)$. Let $p\colon Y\to X$  be 
 the blow-up of $X$ at $q$.  Let $E=\exc p$. Then the same calculations as in Example \ref{ex_blow_up} imply that $a(E,\partial_{x_1})=1$. 
 \end{example}

One advantage of the classes of canonical and log canonical singularities is that they make sense on singular varieties. 
The other benefit is that they are preserved (almost by definition) by the kind of operations we do in the MMP.

We also emphasise that these definitions do not depend on the existence of a resolution of singularities, although without a resolution statement it can be difficult in practice to determine 
whether or not a given singularity is log canonical.
However, as a sanity check we do have the following:

\begin{proposition}
Let $X$ be a smooth variety and let $\mathcal F$ be a smooth foliation on $X$. Then, $\mathcal F$ has canonical singularities.
\end{proposition}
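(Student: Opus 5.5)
Since $\mathcal F$ is smooth on the smooth variety $X$, Frobenius' theorem gives, around every point $x\in X$, an analytic chart with coordinates $(x_1,\dots,x_n)$ in which $T_{\mathcal F}$ is generated by $\partial_{x_1},\dots,\partial_{x_r}$. Equivalently, $\mathcal F$ is locally the foliation induced by the smooth morphism $\phi=(x_{r+1},\dots,x_n)\colon U\to\mathbb C^{q}$. The plan is to compute discrepancies in this model and show they are nonnegative.

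We need $a(E,\mathcal F)\ge 0$ for every divisorial valuation $E$. The discrepancy can be computed at the generic point of the centre of $E$, and the notions involved (pullback foliation, $K_{\mathcal F}$, invariance) are local and compatible with analytic localisation. We may therefore assume $X=U$ and $\mathcal F$ is the foliation of the projection $\phi\colon \mathbb C^n\to\mathbb C^q$. Since $X$ is smooth, $K_{\mathcal F}$ is Cartier. Fix a birational morphism $\varphi\colon Y\to X$ on which $E$ appears, with $Y$ normal. By passing to a resolution we may take $Y$ smooth, since the discrepancy depends only on the valuation. The pulled back foliation $\mathcal F_Y$ is then the foliation induced by the (equidimensional near the generic point of $E$) map $f=\phi\circ\varphi\colon Y\to\mathbb C^q$.

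The key step is to compare canonical classes through the conormal sheaf. We have $K_{\mathcal F}=K_X-\det\mathcal N^*_{\mathcal F}$ up to sign convention, i.e.\ $K_{\mathcal F}=K_X+\det \mathcal N_{\mathcal F}$, with $\det\mathcal N^*_{\mathcal F}=\phi^*\omega_{\mathbb C^q}$, generated by $dx_{r+1}\wedge\dots\wedge dx_n$. On $Y$, $\mathcal N^*_{\mathcal F_Y}$ is the saturation of $f^*\Omega^1_{\mathbb C^q}$ in $\Omega^1_Y$. Hence $\det\mathcal N^*_{\mathcal F_Y}=\varphi^*\det\mathcal N^*_{\mathcal F}+ B$, where $B\ge 0$ is the divisorial part of the vanishing of the $q$-form $f^*(dx_{r+1}\wedge\dots\wedge dx_n)$ on $Y$. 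This gives
\[
K_{\mathcal F_Y}-\varphi^*K_{\mathcal F}=(K_Y-\varphi^*K_X)-B,
\]
so $a(E,\mathcal F)=a(E,X)-\operatorname{mult}_E B$. Alternatively, one can invoke the equidimensional fibration formula in Example~\ref{example_first}(3) near the generic point of $E$: $K_{\mathcal F_Y}=K_{Y/\mathbb C^q}+\sum(1-\ell_D)D$, which reduces to the same computation.

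The main obstacle is the estimate $\operatorname{mult}_E B\le a(E,X)$, with equality precisely when $E$ is invariant, so that $a(E,\mathcal F)\ge0$. If $E$ is not invariant, i.e.\ $f(E)$ is dense in $\mathbb C^q$ (or at least $f|_E$ is dominant onto its image of dimension $q$), then $f$ is smooth at the generic point of $E$ and $B$ does not contain $E$. Hence $a(E,\mathcal F)=a(E,X)\ge 1>0$. If $E$ is invariant, write $Z=\overline{f(E)}$ of codimension $c\ge1$ in $\mathbb C^q$. I would factor the valuation through $\mathbb C^q$: near the generic point of $E$ the map $f$ factors through a birational model $W\to\mathbb C^q$ on which $E$ maps onto a divisor $E_W$. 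Then $B$ along $E$ equals $\operatorname{mult}_E(K_{Y/W})+a(E_W,\mathbb C^q)$. Meanwhile, because $X\cong \mathbb C^r\times\mathbb C^q$ and the centre of $E$ on $X$ dominates nothing more than what it dominates in the $\mathbb C^q$ factor, $a(E,X)$ is at least this sum. Concretely, $E$ is a valuation on $\mathbb C^r\times\mathbb C^q$ whose image on $\mathbb C^q$ is $E_W$, and the log discrepancy is additive/superadditive along products, as one checks by choosing coordinates in which $\varphi$ is monomial at the generic point of $E$. Making this comparison rigorous, by reducing via toroidal (monomial) coordinates at the generic point of $E$ and computing both sides explicitly as in Example~\ref{ex_blow_up}, is where the real work lies. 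In the monomial computation with weights $w_1,\dots,w_n$ on the coordinates, one gets $a(E,X)=\sum w_i-1$ and $\operatorname{mult}_E B=\sum_{i>r}w_i-1$ when some $w_i>0$ with $i>r$. Hence $a(E,\mathcal F)=\sum_{i\le r}w_i\ge0$, or $\sum w_i-1\ge 0$ in the non-invariant case.
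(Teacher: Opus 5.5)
Your reduction to the local model, the identity $a(E,\mathcal F)=a(E,X)-\mult_E B$, and the non-invariant case are all fine. However, there is a genuine gap exactly where you say ``the real work lies'': the inequality $\mult_E B\le a(E,X)$ for invariant $E$ is never established.

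Your monomial computation only covers valuations that are monomial in the fixed foliation-adapted coordinates $x_1,\dots,x_n$. A general divisorial valuation is not of this form, and you cannot change coordinates to make it so, because $x_{r+1},\dots,x_n$ must remain first integrals. At the generic point of $E$ you do have $\varphi^*x_i=t^{w_i}u_i$ with $u_i$ units. But the behaviour of the $u_i$ along $E$ also enters the Jacobian and $f^*(dx_{r+1}\wedge\cdots\wedge dx_n)$, so the $w_i$ alone determine neither side.

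For instance, take $\mathcal F=\langle\partial_x\rangle$ on $\mathbb C^2$. Blow up the origin and then a non-torus-fixed point of the exceptional curve, so that $x=s$, $y=s+s^2t$ and $E=\{s=0\}$. Then $w=(1,1)$ and $E$ is invariant, but $a(E,X)=2$ while $\sum w_i-1=1$, and $a(E,\mathcal F)=2$, not $w_1=1$.

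Your alternative sketch through a model $W\to\mathbb C^q$ does not close the gap either. Its formula for $\mult_E B$ is not right as stated: $Y\to W$ is not birational, and $a(E_W,\mathbb C^q)$ would have to be weighted by the multiplicity of $E$ in the pullback of $E_W$. The comparison with $a(E,X)$ via ``superadditivity along products'' is simply asserted. Also, ``equality precisely when $E$ is invariant'' is false: for the blow-up of the origin with $\mathcal F$ given by $\partial_{x_1}$, $E$ is invariant and $a(E,\mathcal F)=1$, as in the paper's example.

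The missing inequality is in fact immediate, with no case distinction and no monomialisation. Near the generic point of $E$, with local equation $t$, write $\varphi^*(dx_{r+1}\wedge\cdots\wedge dx_n)=t^{b}\omega'$, where $b=\mult_E B$ and $\omega'$ is a regular $q$-form. Then
\[
\varphi^*(dx_1\wedge\cdots\wedge dx_n)=t^{b}\,\varphi^*(dx_1\wedge\cdots\wedge dx_r)\wedge\omega',
\]
so $a(E,X)\ge b$. In fact $a(E,\mathcal F)=\ord_E\bigl(\varphi^*(dx_1\wedge\cdots\wedge dx_r)\wedge\omega'\bigr)\ge 0$, and with this step your argument is complete.

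The paper itself only cites Druel's Lemma~5.9. A coordinate-free form of the same idea runs as follows. Smoothness of $\mathcal F$ makes the Pfaff field $\Omega^r_X\to\mathcal O_X(K_{\mathcal F})$ surjective. Hence the composite $\varphi^*\Omega^r_X\to\Omega^r_Y\to\mathcal O_Y(K_{\mathcal F_Y})$, which agrees with the pulled-back Pfaff field where $\varphi$ is an isomorphism, factors through $\varphi^*\mathcal O_X(K_{\mathcal F})$. This gives $K_{\mathcal F_Y}-\varphi^*K_{\mathcal F}\ge 0$ directly, without Frobenius or local models.
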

\begin{proof}
This follows from \cite[Lemma 5.9]{Druel21}.
\end{proof}

\begin{remark}
We remark that a smooth foliation is not necessarily terminal: indeed, consider as an example the foliation $\mathcal F$ induced by a smooth morphism $f\colon X\to Y$
from a smooth threefold $X$ to a smooth surface $Y$. Then, it is an easy exercise to show that $\mathcal F$ is smooth but 
the blow-up in a fibre will always extract a divisor of discrepancy zero with respect to $\mathcal F$.
\end{remark}

While the inclusion of $\epsilon(E)$ in the definition of log canonicity is important philosophically (and from the perspective of adjunction), it is in fact equivalent to requiring 
that the discrepancies are all at least $ -1$, as the following Lemma shows:

\begin{lemma}
    Let $X$ be a normal variety, let $\mathcal F$ be a foliation on $X$ and let $\Delta \ge 0$ be an $\mathbb R$-divisor such that $K_{\mathcal F}+\Delta$ is $\mathbb R$-Cartier. 
    
        Then $(\mathcal F, \Delta)$ is log canonical if and only if  $a(E, \mathcal F, \Delta) \ge -1$ for all divisors $E$ over $X$.
\end{lemma}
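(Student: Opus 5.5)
The implication from log canonicity to the bound is immediate: $-\epsilon(E)\ge -1$ for every $E$, so $a(E,\mathcal F,\Delta)\ge -\epsilon(E)$ gives $a(E,\mathcal F,\Delta)\ge -1$. For the converse I will argue by contrapositive. Suppose $(\mathcal F,\Delta)$ is not log canonical. Since non-invariant divisors are only required to satisfy $a\ge -1$, there is then an $\mathcal F$-invariant divisor $E$ over $X$ with $a(E,\mathcal F,\Delta)=-t<0$. The plan is to produce divisors over $X$ whose discrepancies tend to $-\infty$; in particular some discrepancy will be $<-1$.

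Fix a birational morphism $\varphi\colon Y\to X$ on which $E$ is a divisor, and write $\varphi^*(K_{\mathcal F}+\Delta)=K_{\mathcal F_Y}+\Delta_Y$, so that $E$ has coefficient $t$ in $\Delta_Y$. Discrepancies of divisors over $Y$ computed for $(\mathcal F_Y,\Delta_Y)$ agree with those for $(\mathcal F,\Delta)$, so we may work on $Y$. Choose a general point $\eta\in E$. There $Y$, $E$ and $\mathcal F_Y$ are smooth, and $\eta$ lies on no other component of $\Delta_Y$. By Frobenius' theorem, near $\eta$ there are analytic coordinates $x_1,\dots,x_n$ with $E=\{x_1=0\}$ and $\mathcal F_Y$ given by $dx_1,\dots,dx_q$. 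Here $x_1$ can be taken as a normal coordinate precisely because $E$ is invariant. Set $q=\dim X-\rk\mathcal F$. Because the rank of the foliation is positive, $x_n$ is a leafwise (tangent) coordinate.

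For each integer $p\ge1$, take the weighted blow-up with weight $p$ on $x_1$, weight $1$ on $x_n$, and weight $0$ on the remaining coordinates. Let $E_p$ be its exceptional divisor. I would compute $a(E_p,\mathcal F_Y,\Delta_Y)$ in the chart $x_1=y^pu$, $x_n=y$, following the model of Example~\ref{ex_blow_up} and the computation in Proposition~\ref{p_RH}:
\begin{itemize}
\item $K_Y$ acquires coefficient $p$ along $E_p=\{y=0\}$.
\item The conormal generator $dx_1=y^{p-1}(pu\,dy+y\,du)$ vanishes to order $p-1$, so $E_p$ is invariant and $\det\mathcal N^*$ changes by $(p-1)E_p$.
\item Consequently $a(E_p,\mathcal F_Y)=p-(p-1)=1$.
\item Since $\mathrm{mult}_{E_p}(E)=p$, the boundary contributes $-pt$.
\end{itemize}
This gives $a(E_p,\mathcal F,\Delta)=1-pt$, which is less than $-1$ as soon as $p>2/t$.

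The main obstacle is making this local computation rigorous and algebraic. The chart computation is analytic, but the weighted blow-up centre $\{x_1=x_n=0\}$ may be replaced by the algebraic valuation it defines. Concretely, one can take successive ordinary blow-ups of the intersection of the newest exceptional divisor with the strict transform of $E$, restricted to the smooth locus near $\eta$, and then check the discrepancy recursion inductively. The discrepancy depends only on the valuation, so the analytic coordinates are used only to compute it. I would also double-check that the saturation of the pulled-back conormal sheaf is exactly as claimed, namely that $pu\,dy+y\,du$ vanishes only in codimension two.
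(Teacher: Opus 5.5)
Your proposal is correct and follows the paper's route: the paper also takes an invariant divisor $E$ with $a(E,\mathcal F,\Delta)<0$ and, following \cite[Remark 2.3]{CS21}, repeatedly blows up a divisor inside (the strict transform of) $E$ until it produces a discrepancy below $-1$. Your weighted blow-up with weights $(p,1)$ is an explicit version of that argument, giving $a=1-pt$; it is realised algebraically by $p$ successive blow-ups of the newest exceptional divisor intersected with the strict transform of $E$, where the first centre should be a general algebraic hypersurface of $E$ whose local equation serves as $x_n$.
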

\begin{proof}
One direction is clear. Suppose that $a(E, \mathcal F, \Delta) \ge -1$ for all divisors $E$ over $X$. We want to show that $(\mathcal F,\Delta)$ is log canonical. 
It suffices to show that if $E$ is any $\mathcal F$-invariant divisor over $X$ then in fact $a(E, \mathcal F, \Delta) \ge 0$.

So, suppose for the sake of contradiction that there exists a birational morphism $\pi\colon X' \to X$ which extracts an invariant divisor $E$ with $a(E, \mathcal F, \Delta)<0$.  
Arguing along the lines of \cite[Remark 2.3]{CS21} (i.e., repeatedly blowing-up some divisor in $E$)
we see that there exists some higher birational model $X'' \to X$ which extracts a divisor $E'$ of discrepancy less than $-1$, a contradiction.    
\end{proof}

We say that $\mathcal F$ is a (log) terminal (resp. (log) canonical) at a (not necessarily closed) point $x \in X$ if the obvious inequalities on divisors hold for any divisor $E$ whose centre on $X$ is the closure of
$x$.

\begin{example}
Let us continue to use notation as in Example \ref{ex_blow_up}.

Consider now the case $n=2$ and $p(u,v)=(u,uv)$ with exceptional divisor $E$. If $\partial=x\partial_x+\lambda y\partial_y$, with $\lambda\in \mathbb C$ then 
$$p^*\partial = u\partial_u+(\lambda -1)v \partial_v.$$
It follows that $a=a(E,\partial)=0$ unless $\lambda =1$.  It is easy then to check that if $\lambda \in \mathbb Q_{>0}$ then $\partial$ is not canonical. 
\end{example}

Let $X$ be a normal variety, let $\partial$ be a derivation on $X$ and let $(\mathcal F,D)$ be the {\bf associated foliated pair}, i.e. $\mathcal F$ is the rank one foliation generated by $\partial$
and  $D$ is the sum of the codimension one components of $\sing \partial$, taken with multiplicity. 
We say that $\partial$ is {\bf log canonical} if $(\mathcal F, D)$ is log canonical.

\begin{remark} Note that if $\partial$ is a derivation on a normal variety $X$, $(\mathcal F, D)$ is the associated foliated pair and $b\colon X' \to X$ is a birational morphism which extracts a divisor $E$
then we can interpret the discrepancy $a(E, \mathcal F, D)$ in terms of $\partial$ as follows.  There is a lift of $\partial$ to a meromorphic vector field $\partial'$ on $X'$.
In a neighbourhood of a general point $x \in E$ let $\{t = 0\}$ be a local equation for $E$ and write $\partial' = t^{-a}\delta$ where $\delta$ is a holomorphic vector field and $a$ is an integer. 
We then have an equality $a = a(E, \mathcal F, D)$.
\end{remark}

\subsection{Singularities of rank one foliations}

In this section, we review some features of the singularities of rank one foliations using the framework of the MMP described in the previous section.

Let us note that if $x \in X$ is a germ of a normal variety and $\mathcal F$ is a rank one foliation on $X$ such that $K_{\mathcal F}$ is Cartier, then locally $\mathcal F$ can be
defined by a single vector field $V$ or, equivalently, by the associated derivation $\partial_V$ (cf. Section \ref{s_derivations}). Indeed, in this case $T_{\mathcal F}$ is locally free of rank one, hence locally generated by a single section.

\subsubsection{Jordan decomposition of vector fields} 
Let $x\in X$ be a point of a normal variety and let $\partial$ be a vector
field such that $x\in \sing \partial$. Then 
by the Leibniz rule, we have that $\partial(\mathfrak m^n_x)\subset \mathfrak m^n_x$, for any positive integer $n$. Thus,
$\partial$ induces, for every
$n\ge 2$, a linear endomorphism
\[
\partial_n \colon \mathfrak m_x/\mathfrak m_x^n \to \mathfrak m_x/\mathfrak m_x^n .
\]
Following \cite{Mar81}, taking the Jordan decomposition of these 
endomorphisms and passing to the inverse limit, we obtain a canonical
decomposition
\[
\widehat\partial = \partial_S + \partial_N
\]
of the induced derivation $\widehat\partial$ of the completed local ring
$\widehat{\mathcal O}_{X,x}$, where $\partial_S$ is semisimple and
$\partial_N$ is nilpotent.

In general, this decomposition is not induced by holomorphic (or
algebraic) vector fields on a neighbourhood of $x$, but it exists only at the
level of the formal completion. In suitable formal coordinates, we may write
\[
\partial_S = \sum \lambda_i x_i \partial_{x_i}
\]
for some  $\lambda_i\in \mathbb C$.

\begin{lemma}\label{l_jordan}Notations as above.

Then $\partial_S(\widehat{\mathcal O}_{X,x}) \subset \widehat\partial(\widehat{\mathcal O}_{X,x})$.
In particular, 
$\sing(\widehat\partial) \subset \sing(\partial_S)$.
\end{lemma}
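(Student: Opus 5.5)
The plan is to work at each finite level $\mathfrak m_x/\mathfrak m_x^n$, where the Jordan decomposition is a statement in finite-dimensional linear algebra. The key point is that the semisimple part of an endomorphism is a polynomial in it without constant term. More precisely, for $T\in\operatorname{End}(V)$ with $V$ finite-dimensional over $\mathbb C$, the Chinese remainder theorem gives a polynomial $P$ such that $T_S=P(T)$. We may choose $P$ with $P(0)=0$: if $0$ is an eigenvalue, impose $P\equiv 0$ modulo the corresponding power of $t$; if not, add the congruence $P\equiv 0 \bmod t$, which is coprime to the characteristic polynomial. Hence $T_S=T\circ Q(T)$ for some polynomial $Q$, so $\operatorname{im} T_S\subset \operatorname{im} T$.

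To pass to the completion, I would first note that the full ring splits as $\widehat{\mathcal O}_{X,x}=\mathbb C\oplus\widehat{\mathfrak m}_x$. Both $\widehat\partial$ and $\partial_S$ kill constants, since derivations kill $\mathbb C$ and $\partial_S$ is itself a derivation. It therefore suffices to show $\partial_S(\widehat{\mathfrak m}_x)\subset\widehat\partial(\widehat{\mathcal O}_{X,x})$.

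Now fix $f\in\widehat{\mathfrak m}_x$. By the first step, for each $n$ there is $g_n\in\mathfrak m/\mathfrak m^n$ with $\partial_n g_n\equiv (\partial_S f) \bmod \mathfrak m^n$. The main obstacle is choosing the $g_n$ compatibly, since preimages are not unique. I would handle this by Mittag-Leffler: the sets $A_n=\partial_n^{-1}(\partial_S f \bmod \mathfrak m^n)$ are nonempty affine subspaces of finite-dimensional spaces, and they map to each other under the projections. The images of $A_m$ in $A_n$ for $m\ge n$ are affine subspaces, so they form a descending chain that stabilizes. The stable images are nonempty and surject onto each other, so $\varprojlim A_n\neq\emptyset$. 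An element $g$ of this limit satisfies $\widehat\partial g=\partial_S f$.

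An alternative route to the same conclusion is to check that the polynomials $Q$ can be chosen compatibly, but Mittag-Leffler avoids this. The inclusion of singular loci then follows from the first statement. Singularity of $\widehat\partial$ at a (formal) point means its image lies in the corresponding ideal, and since $\partial_S(\widehat{\mathcal O})\subset\widehat\partial(\widehat{\mathcal O})$, the same holds for $\partial_S$. Thus $\sing(\widehat\partial)\subset\sing(\partial_S)$.
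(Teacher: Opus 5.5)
Your proof is correct and takes the paper's approach: at each finite level you write the semisimple part as a polynomial in $\partial_n$ with no constant term, so $\partial_{S,n}=\partial_n\circ q(\partial_n)$ and hence $\partial_{S,n}(\mathfrak m/\mathfrak m^n)\subset\partial_n(\mathfrak m/\mathfrak m^n)$. The paper only asserts that these finite-level inclusions suffice, whereas your Mittag-Leffler argument on the affine subspaces $\partial_n^{-1}(\partial_S f \bmod \mathfrak m^n)$ actually justifies passing to the completion, so your write-up is, if anything, more complete than the paper's.
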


\begin{proof}  Let
$\widehat R := \widehat{\mathcal O}_{X,x}$ and let $\mathfrak m \subset \widehat R$ be the maximal ideal.
It suffices to show that for all integers $n \ge 0$ we have $\partial_{S, n}(\mathfrak m/\mathfrak m^n) \subset \partial_n(\mathfrak m/\mathfrak m^n)$, where $\partial_n$ (resp. $\partial_{S, n}$) is the linear map $\mathfrak m/\mathfrak m^n \to \mathfrak m/\mathfrak m^n$ induced by $\partial$ (resp. $\partial_S$).

By the properties of the Jordan decompositions, there exists a polynomial $p(t) \in \mathbb C[t]$ without constant term such that $\partial_{S, n} = p(\partial_n)$, considered as endomorphisms of the underlying
$\mathbb C$-vector space of $\widehat R/\mathfrak m^n$.
If we write $p(t) = tq(t)$ we see that 
$\partial_{S, n} = \partial_n \circ q(\partial_n)$, which implies that
$\partial_{S, n}(\widehat R/\mathfrak m^n) \subset \partial_n(\widehat R/\mathfrak m^n)$, as required.

The inclusion of singular loci follows immediately.
\end{proof}

\subsubsection{Log canonical vector fields} 
The following fundamental observation is due to McQuillan and Panazzolo, and provides a characterisation of singular vector fields.  The proof presented here arose out of joint discussions with Andr\'e Belotto da Silva.

\begin{proposition}
\label{prop_lc_iff_nonnilp}
Let $x \in X$ be a germ of a normal variety and let $\partial$ be a vector field such that $x \in \sing \partial$.  

\begin{enumerate}
\item If $\partial_S = 0$ (equivalently, the linear part of $\partial$ is nilpotent), then there is a single weighted blow-up which extracts a divisor 
of discrepancy less than $-1$ and, in particular, $\partial$ is not log canonical.

\item If $\partial$ is not log canonical, then $\partial_S  = 0$  (equivalently, the linear part of $\partial$ is nilpotent).
\end{enumerate}

In other words, a vector field is log canonical if and only if its linear part is non-nilpotent.

\end{proposition}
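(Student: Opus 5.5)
We will prove the two parts separately, working formally. By the Remark following the definition of the associated foliated pair, discrepancies can be computed from $\partial$ directly: if $b\colon X'\to X$ extracts $E$ with local equation $t$, and the lift is $\partial'=t^{-a}\delta$ with $\delta$ holomorphic and not divisible by $t$, then $a=a(E,\mathcal F,D)$. Since $\partial$ is defined at $x$ and discrepancies over $x$ only involve the completion, we may replace $\mathcal O_{X,x}$ by $\widehat{\mathcal O}_{X,x}$. We may also embed $X\subset\mathbb C^N$ and lift $\partial$ to $\tilde\partial$ on $\mathbb C^N$ preserving $I_X$. The weighted blow-ups used below can then be done on $\mathbb C^N$, and we restrict to the strict transform of $X$.

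\emph{Part (1).} Suppose the linear part of $\partial$ is nilpotent. Choose formal coordinates so that $\tilde\partial=\sum_i f_i\partial_{x_i}$ with $f_i\in\mathfrak m$ and nilpotent Jacobian matrix at $0$.
\begin{itemize}
\item After a linear change we may put the linear part in strictly upper triangular form, so $f_i\in (x_{i+1},\dots,x_N)+\mathfrak m^2$.
\item Choose weights $w_1>w_2>\dots>w_N>0$, for example $w_i=N+2^{N-i}$ or similar, such that each $f_i$ has weighted order strictly greater than $w_i$. Writing $\nu$ for the weighted valuation, this means $\nu(f_i)\ge w_i+1$. This is possible because linear terms $x_j$ with $j>i$ have $w_j<w_i$, so the condition on them must be arranged by making every weight comparable, so that quadratic terms already exceed all $w_i$. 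Concretely, take all $w_i$ in an interval $[M,M+N)$ with $M\gg N$, decreasing in $i$.
\item For the weighted blow-up with exceptional divisor $E$, the vector field $\tilde\partial$ acts on $\nu$-homogeneous pieces, raising weight by at least $1$. In a weighted chart this gives $\partial'=t^{k}\delta$ with $k\ge1$, in the normalisation where $a=-k$. Hence $a(E)\le -1$.
\end{itemize}
To obtain strictly less than $-1$, we also need the invariant-divisor correction. Either $E$ is invariant, in which case $a\ge 0$ would be required for log canonicity and $a\le -1$ already violates it. Or we refine the weights to get a jump of at least $2$, by scaling all weights by $2$ while keeping the strict inequalities. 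We must check that the strict transform of $X$ meets $E$ in a divisor $E_X$ along which the same estimate holds. This follows because $\nu|_X$ is still a divisorial valuation, once the weights are chosen generically so that the initial ideal of $I_X$ is reasonable.

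\emph{Part (2).} We argue by contrapositive: assume $\partial_S\neq0$ and show $a(E)\ge-\epsilon(E)$ for every $E$ over $x$. Let $\nu=\ord_E$, restricted to $\widehat{\mathcal O}_{X,x}$. The discrepancy equals $-\min_{g}\bigl(\nu(\partial g)-\nu(g)\bigr)$ up to the correction given by $\epsilon(E)$. More precisely, $a(E)\le 0$ if $\partial$ preserves $\nu$-filtration pieces, and $a(E)=-\epsilon(E)$ or $0$ precisely when $\partial$ does not raise $\nu$ uniformly. It therefore suffices to find $g$ with $\nu(\partial g)\le \nu(g)$, and, in the invariant case, to find $g$ with $\nu(\partial g)\le\nu(g)-1$ impossible, i.e. to show $a\ge0$. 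Choose an eigenfunction $g$ of $\partial_S$, a formal coordinate $x_i$ with $\lambda_i\ne0$. By Lemma~\ref{l_jordan}, $\partial_S(\widehat{\mathcal O})\subset\widehat\partial(\widehat{\mathcal O})$, in fact with $\partial_S=\widehat\partial\circ q(\widehat\partial)$. Since $\widehat\partial$ does not decrease $\nu$ beyond $-a$, applying this to $g$ gives $\lambda_i g=\widehat\partial(q(\widehat\partial)g)$. If $\widehat\partial$ raised $\nu$ by at least $1$ everywhere, so would $\partial_S=\widehat\partial\circ q(\widehat\partial)$, since $q(\widehat\partial)$ does not lower $\nu$. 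That contradicts $\nu(\lambda_i g)=\nu(g)$. Hence $a(E)\ge0\ge-\epsilon(E)$.

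\emph{Main obstacle.} The delicate step is making the valuative computation rigorous on singular $X$ and in the non-invariant case. This requires justifying that $q(\widehat\partial)$ is $\nu$-nondecreasing, which holds since $\widehat\partial$ itself is, given $a\le$ what we are bounding. It also requires relating ``$\partial$ raises $\nu$'' to the Remark's $t^{-a}$ description, and in part (1) ensuring the chosen weighted blow-up restricts well to $X$.
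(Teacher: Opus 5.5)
Your outline is salvageable, but as written it has genuine gaps, mainly in part (1). Part (2) is a valid alternative route once one implication is stated correctly.

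\textbf{Part (1).} You follow the paper's strategy (a weighted blow-up adapted to the nilpotent linear part), but two steps fail.

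First, your weights run the wrong way. With $f_i\in(x_{i+1},\dots,x_N)+\mathfrak m^2$, the monomial $x_{i+1}\partial_{x_i}$ has weighted degree $w_{i+1}-w_i$, which is negative when $w_1>\dots>w_N$. So $\nu(f_i)\ge w_i+1$ fails as soon as the linear part is non-zero. Squeezing the weights into $[M,M+N)$ only controls the quadratic terms. For your triangular form you need $w_1<\dots<w_N$. The paper instead writes the linear part as $\sum\varepsilon_i x_i\partial_{x_{i+1}}$ and takes $w_i\ge w_{i+1}+r$ and $2w_n\ge w_1+r$.

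Second, replacing $w$ by $2w$ gives the same weighted blow-up, the same $E$ and the same $\ord_E$; only the normalisation $\nu_{2w}=2\nu_w$ changes. So it cannot improve the vanishing order along $E$. The mismatch between the chart coordinate $t$ and a local equation of $E$ is exactly what the paper handles with the cyclic cover and Riemann--Hurwitz, $\tilde e=se-\epsilon(E)(s-1)$, where $s$ is the ramification index of the chart cover along $E$, equal to $\gcd(w_i)$.

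The non-invariant case can be rescued without any scaling. If $\partial$ raises $\ord_E$ by at least $1$ on $\mathcal O_{X,x}$, it does so on the whole valuation ring. Hence near a general point of $E$ one has $\pi^*\partial=t\delta$ with $\delta$ tangent to $E$. If $E$ is not invariant, $\delta$ must then be divisible by $t$, so $a(E)\le -2$. To get discrepancy $<-1$ in the invariant case you need a genuine jump of $2$ in $\ord_E$, e.g.\ consecutive gaps $\ge 2$ with $\gcd(w_i)=1$.

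Finally, your appeal to generic weights for singular $X$ is unjustified: the weights are constrained by the linear part, and $\nu_w$ need not induce a valuation on $\mathcal O_X$. It is also unnecessary. On the cover chart, $t^{-k}b^*\tilde\partial$ is holomorphic, tangent to $\{t=0\}$, and preserves the $t$-saturated ideal of the strict transform, so the estimate holds along every component of $E\cap X'$.

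\textbf{Part (2).} This is a genuinely different route from the paper's, and its core is sound. If $a(E)<-\epsilon(E)$, then $\widehat\partial$ raises $\nu=\ord_E$. Hence so does $\partial_S$, which is a polynomial in $\widehat\partial$ modulo $\mathfrak m^n$. That is absurd on an eigenfunction $g$ with non-zero eigenvalue.

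The paper never uses $\partial_S$ or the completion. It lifts an eigenvector $\bar f\in\mathfrak m/\mathfrak m^2$ of the linear part to an $f$ with $\ord_E(f)$ maximal among lifts of multiples of $\bar f$ (finite by Zariski's lemma). Then $\mu^{-1}\partial f$ is another such lift, so $\ord_E(\partial f)\le\ord_E(f)$. Your version is more conceptual but needs repairs:
\begin{enumerate}
\item Lemma~\ref{l_jordan} only gives $\partial_{S,n}=\partial_n\circ q_n(\partial_n)$ with $q_n$ depending on $n$; in general no single $q$ works. So you must truncate at some $n>\nu(g)$, using $\nu(\widehat{\mathfrak m}^n)\ge n$.
\item You must extend $\nu$ to $\widehat{\mathcal O}_{X,x}$, and finiteness of $\nu(g)$ again uses Zariski's lemma.
\end{enumerate}

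More importantly, your dictionary between $a(E)$ and $\nu$ is wrong as stated. The only implication you need is exactly the paper's: if $a(E)<-\epsilon(E)$, then $\pi^*\partial=t^b\delta$ with $b\ge 1$ and $\delta$ tangent to $E$, hence $\ord_E(\partial f)>\ord_E(f)$ for $f\in\mathfrak m$. Your \emph{precisely when} converse is false, and so is your conclusion $a(E)\ge 0$. For $\partial=x\partial_x+y\partial_y$ and the blow-up of the origin, $\pi^*\partial=u\partial_u$, so $a(E)=-1$ with $E$ not invariant. What your argument actually gives is $a(E)\ge-\epsilon(E)$, which is all that log canonicity requires.
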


\begin{proof}
This is proven in \cite[Fact I.ii.4]{MP13}, the only difference is that (1) is a bit more precise than the claim in \cite{MP13}.  



We first prove (1).
Up to a change of coordinates, we may assume that the linear part of $\partial$ is in Jordan normal form.

Let $\pi\colon X' \to X$ be the weighted blow-up with weights $w=(w_1,\dots,w_n)$ and let $U'\subset X'$ be a chart so that there exists a  cyclic cover
$\sigma\colon \widetilde U \to U'$ and coordinates $(t,y_2,\dots,y_n)$ on $\widetilde U$ such that the induced morphism $b\colon\widetilde U\to X$ is given by 
$$b(t,y_2,\dots,y_n)=(t^{w_1},t^{w_2}y_2,\dots,t^{w_n}y_n).$$
Since we only need to estimate the vanishing order of $\pi^*\partial$ along the exceptional divisor $E$ of $\pi$, 
we claim that it is enough to show that 
$t^r$ divides $b^*\partial$ where $r$ is an integer greater than $1$.
Indeed, let $s$ be the ramification index of the cover $\widetilde U \to U'$ along $E$, let $e$ be the order of vanishing of $\pi^*\partial$ along $E$ and let $\tilde e$ be the order of vanishing of $b^*\partial$ along $\{t = 0\} = \sigma^{-1}(E)$.
The Riemann-Hurwitz formula
(cf. Proposition \ref{p_RH})
tells us that $\tilde e = se-\epsilon(E)(s-1)$, hence if $\tilde e > \epsilon(E)$, then $e >\epsilon(E)$.

Similarly to Example \ref{ex_blow_up}, we can compute 
\[b^*\partial_{x_1} = \frac{1}{w_1t^{w_1-1}}\partial_t - \sum_{i = 2}^n \frac{w_iy_i}{w_1t^{w_1}}\partial_{y_i}\]
and \[b^*\partial_{x_i} = \frac{1}{t^{w_i}}\partial_{y_i}\qquad \text{for $i=2,\dots,n$}.\]

Write $\partial = \partial_1+\partial_2$ where $\partial_1$ is the linear part of $\partial$ and $\partial_2$ is the part of order
at least $2$.
Since the linear part is in Jordan form we can write it as $\sum_{i = 1}^{n-1} \varepsilon_ix_i\partial_{x_{i+1}}$ where $\varepsilon_i \in \{0, 1\}$ for $i=1,\dots,n-1$. 
Our above calculations show that 
\[b^*(x_i\partial_{x_{i+1}})= t^{w_i - w_{i+1}}y_i\partial_{y_{i+1}},\]
where we define $y_1 = 1$,
 so if we set
$w_1\ge w_2+r \ge w_3+2r \ge \dots \ge w_{n}+(n-1)r$ then $t^r$ divides $b^*\partial_1$.  
Turning to $\partial_2$, we see that if $k \ge 2$, then \[b^*(x_ix_j\partial_{x_k}) = t^{w_i+w_j-w_k}y_iy_j\partial_{y_k},\] where again we define $y_1 = 1$.
Similarly, we see that 
\[
b^*(x_ix_j\partial_{x_1}) = t^{w_i+w_j-w_1}y_iy_j\left( \frac{t}{w_1}\partial_t - \sum_{i = 2}^n \frac{w_iy_i}{w_1}\partial_{y_i}\right ).
\]
So, if we assume that $w_1 \ge w_2 \ge \dots \ge w_n$ and $2w_n \ge w_1+r$, so that $w_i+w_j-w_k \ge 2w_n-w_1 \ge r$, then it follows from the above calculations that $t^r$ divides $b^*(x_ix_j\partial_{x_k})$, and hence if ${\bf m}$ is any multi-index with $|{\bf m}| \ge 2$ then $t^r$ divides $b^*(x^{\bf m}\partial_{x_k})$. We then deduce that, with this choice of weights, we have that  $t^r$ divides $b^*\partial_2$.

Thus, if we take $w_1 = w_2+r = \dots = w_n+(n-1)r$ and  $w_n \ge nr$, so that $w_1+r =  w_n+nr \leq 2w_n$, 
then the weighted blow-up with these weights will extract a divisor of discrepancy at most $-r$. Hence, (1) follows. 

\medskip

We now prove (2).
  We will suppose that $\partial$ is not log canonical and will show that the linear part of $\partial$ is nilpotent.  Let $\pi\colon X' \to X$ be a birational morphism and let $E$ be a prime divisor on $X'$ so that  $a(E, \mathcal F, D) < -\epsilon(E)$ where $(\mathcal F, D)$ is the foliated pair associated to $\partial$.   Note that around a general point of $E$ we may write $\pi^*\partial = t^b\delta$ where $\{t = 0\}$ is a local equation of $E$, $b >0$ and 
$\delta$ is a local vector field which leaves $E$ invariant.  In particular, it follows that if  $\mathfrak m:= \mathfrak m_x \subset \mathcal O_{X, x}$ is the maximal ideal then for any $f \in \mathfrak m$ we have 
\begin{align}\label{ord_ineq}\ord_E(f) < \ord_E(\partial(f)).\end{align}
Suppose for the sake of contradiction that 
the linear part $\partial_0$ of $\partial$ is not nilpotent. 
Then there exist a nonzero element $\bar f \in \mathfrak m/\mathfrak m^2$ and a scalar $\mu \in \mathbb C^*$ such that $
\partial_0\bar f = \mu \bar f$. 
Let $f \in \mathfrak m$ be any lift of $\bar f$. Then
\begin{align}\label{ord_ineq2}
\partial(f) \equiv \mu f \mod{\mathfrak m^2}.
\end{align}
Replacing $f$ by $f+\phi$ for some suitable choice of $\phi \in \mathfrak m^2$, we may freely assume that 
\[\ord_E(f) = \sup \{\ord_E(g) ~ | ~ g \equiv\lambda f \mod \mathfrak m^2, \lambda \in \mathbb C^*\} < +\infty\]
where the last inequality holds by \cite[Lemma 7]{MR9603}- indeed, if $I_n  = \{f \in \mathcal O_{X, x} : \ord_E(f) \ge n\}$ then \cite[Lemma 7]{MR9603} implies that for some $n \gg 0$ we have $I_n \subset \mathfrak m^2$.
On one hand, by (\ref{ord_ineq2})  and by our choice of $f$ it follows that $\ord_E(f) \ge \ord_E(\partial(f))$.
On the other hand, by (\ref{ord_ineq}) we have $\ord_E(f) < \ord_E(\partial(f))$.  This is our sought after contradiction.
\end{proof}


\begin{remark}
Let us also remark that this characterisation of log canonicity for vector fields shows that if $\mathcal F$ is a rank one foliation on a normal variety $X$, then the locus of 
points where $\mathcal F$ is log canonical is an open set (cf. \cite[Definition-Summary I.ii.6]{MP13} or \cite[Proposition 2.6]{Panazzolo}).  We remark that in general it is unclear if the log canonical locus of a general 
foliation is open.  This is true for foliations on threefolds and algebraically integrable foliations (including the case of foliations with one leaf, i.e. varieties) thanks to the relevant reduction of singularities statements, but seems to be unknown in general. 
\end{remark}

\begin{question}
Let $X$ be a normal variety and let $\mathcal F$ be a foliation on $X$.  Is it the case that the locus of log canonical singularities is Zariski open?
\end{question}

Let us also remark that Proposition \ref{prop_lc_iff_nonnilp} is remarkable in the sense that it gives an effective linear criterion for verifying log canonicity.  
It would be very interesting to know if there is some comparable criterion for foliations of higher rank:

\begin{question}
Is there a generalisation of Proposition \ref{prop_lc_iff_nonnilp} for a foliation $\mathcal F$ on a normal variety $X$ such that $T_{\mathcal F}$ is locally free?
\end{question}

For recent progress in this direction, see \cite{bongiorno2026}. As a consequence of Proposition \ref{prop_lc_iff_nonnilp}, we have: 

\begin{lemma}
\label{lem_codim_one_zero_ss}
Let $x \in X$ be a germ of a normal variety and let $\partial$ be a log canonical vector field and suppose that $\sing \partial$ has a codimension one component.

Then (up to scaling by a constant) the semi-simple part of $\partial$ has non-negative integer eigenvalues.
\end{lemma}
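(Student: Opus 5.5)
Let $D$ be a codimension one component of $\sing \partial$ through $x$, defined locally by $f$, so that $\partial = f^m\delta$ for some $m\ge 1$, with $\delta$ a vector field on a neighbourhood of $x$ (reflexivity on the normal germ). Since $\partial$ is log canonical and $x\in\sing\partial$, Proposition \ref{prop_lc_iff_nonnilp} says the linear part of $\partial$ is not nilpotent, i.e.\ $\partial_S\neq 0$. The plan is to compare the formal Jordan decomposition of $\widehat\partial = f^m\widehat\delta$ with the ideal $(f)$. By Lemma \ref{l_jordan}, $\partial_S(\widehat{\mathcal O}_{X,x})\subset \widehat\partial(\widehat{\mathcal O}_{X,x})\subset (f)$. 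Hence the semisimple part also vanishes along $D$.

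Next, choose formal coordinates (after embedding and lifting, or working on $\widehat{\mathcal O}_{X,x}$ via its embedding into a power series ring) in which $\partial_S=\sum\lambda_i x_i\partial_{x_i}$ acts diagonally on monomials. The invariance of eigenspaces is the key tool. The ideal $(f)$, or more intrinsically the ideal of $D$, is $\partial$-invariant because $\partial(f)\in(f)$. Since $\partial_S$ is a polynomial in $\partial$ on every truncation $\widehat{\mathcal O}/\mathfrak m^n$, it is also $\partial_S$-invariant. Thus we may decompose the image of $f$ into $\partial_S$-eigenvectors and pick $f$ to be a $\partial_S$-eigenvector generating $I_D$, so that $\partial_S f=\mu f$. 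Then $\partial_S(g)\in (f)$ for every $g$. Applying this to the coordinate functions $x_i$ gives $\lambda_i x_i\in (f)$ for each $i$ with $\lambda_i\neq 0$.

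Consequently, $f$ divides every $x_i$ with $\lambda_i\ne 0$. This forces $f$ to be (up to a unit) a single coordinate $x_j$ and at most one $\lambda_i$ to be nonzero; if two coordinates with $\lambda_i\neq0$ were both divisible by the same non-unit $f$, they could not be part of a coordinate system in a regular ambient, and in the singular case one argues with the embedding. So $\partial_S=\lambda x_j\partial_{x_j}$ with $\lambda\neq0$. Rescaling $\partial$ by $1/\lambda$ makes the eigenvalues $\{1,0,\dots,0\}$, which are non-negative integers.

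The main obstacle is making the last step rigorous on a singular normal germ. There the coordinates live in an ambient $\mathbb C^N$, and $\partial_S$ is the semisimple part of a lift, so one must check that eigenfunctions divisible by $f$ genuinely constrain the eigenvalues. I would first try to work with eigenvectors in $\mathfrak m/\mathfrak m^2$ of $\widehat{\mathcal O}_{X,x}$ directly, together with the eigenspace decomposition of $\widehat{\mathcal O}_{X,x}$ into generalized weight spaces. If $\lambda$ is an eigenvalue, there is an eigenfunction $g$ with $\partial_S g=\lambda g$, and $g\in (f)$. Writing $g=fh$ with $h$ an eigenfunction of weight $\lambda-\mu$ then lets an induction on order force all eigenvalues to be non-negative integer multiples of $\mu$. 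This yields the statement after scaling by $1/\mu$.
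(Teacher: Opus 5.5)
There is a genuine gap. Your whole argument runs through a local equation $f$ of $D$: you write $\partial=f^m\delta$, deduce $\lambda_i x_i\in(f)$, and in the fallback write $g=fh$. On a normal germ, a codimension one component $D$ of $\sing\partial$ is only a Weil divisor, and $I_D\widehat{\mathcal O}_{X,x}$ need not be principal. This is exactly the case the lemma is designed for: it is the input to Proposition~\ref{prop_codim_1_zero}, whose content is that $D$ becomes $\mathbb Q$-Cartier only after a small modification.

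When $D$ is Cartier at $x$ your reasoning is fine. In any local ring, $x_i=fh_i$ with $x_i\notin\mathfrak m^2$ forces $h_i$ to be a unit, so you even get the stronger conclusion that the eigenvalues are $\{1,0,\dots,0\}$ up to scaling. In general, however, that intermediate claim is false. For $X=\{xy+z^k=0\}$ and $\partial=kx\partial_x+z\partial_z$ (an example in the paper), the zero locus is $D=\{x=z=0\}$, which is not Cartier, and the eigenvalues are $k,0,1$. For $X=\{xy+zw=0\}$ and $\partial=x\partial_x+z\partial_z$ there are two non-zero eigenvalues, and $D$ is not even $\mathbb Q$-Cartier. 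Your fix in the last paragraph does not help, since writing an eigenfunction as $g=fh$ again presupposes $I_D=(f)$.

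The missing idea is to replace divisibility by $f$ with the valuation $\ord_D$, which is available because $\mathcal O_{X,D}$ is a DVR ($X$ being normal).

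\begin{enumerate}
\item As you do, reduce via Lemma~\ref{l_jordan} to $\partial=\partial_S=\sum\lambda_ix_i\partial_{x_i}$, and set $k_i=\ord_D x_i$.
\item Since $\partial_S\neq0$, some $x_1$ has $\lambda_1\neq0$. Then $\lambda_1x_1\in I_D$ forces $k_1\ge1$, and you may rescale so that $\lambda_1=k_1$.
\item For each $i$, write $x_i^{k_1}=u_ix_1^{k_i}$ with $u_i$ a unit of $\mathcal O_{X,D}$, and apply $\partial$:
\[
k_1\lambda_i\,x_i^{k_1}=k_1k_i\,x_i^{k_1}+x_1^{k_i}\,\partial(u_i).
\]
\item Because $\partial$ vanishes along $D$, you have $\ord_D\partial(u_i)\ge1$. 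So the last term has order strictly greater than $k_1k_i=\ord_D x_i^{k_1}$, which forces $\lambda_i=k_i=\ord_D x_i\in\mathbb Z_{\ge0}$. This includes the case $k_i=0$.
\end{enumerate}

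The eigenvalues are thus the vanishing orders along $D$ of the eigen-coordinates. In the two examples above this gives exactly $(k,0,1)$ and $(1,0,1,0)$.
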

\begin{proof}
Let us write $\partial = \partial_S+\partial_N$ as the sum of its semi-simple and nilpotent parts.  Since $\partial$ is log canonical, 
Proposition \ref{prop_lc_iff_nonnilp} implies that $\partial_S \neq 0$. 
Lemma \ref{l_jordan} implies that if $\partial$ has a codimension one component of its zero locus, then the same is true of $\partial_S$.
So, without loss of generality we may freely replace $\partial$ by its semi-simple part and so may assume that $\partial$ is semi-simple, and after a suitable change of coordinates we may assume that $\partial = \sum \lambda_i x_i \partial_{x_i}$ where the $x_i$'s form a basis of the Zariski tangent space at $x$ and $\lambda_i\in \mathbb C$.

Let $D$ be an irreducible codimension one component of $\sing \partial$.  Up to relabelling the $x_i$ we may assume that $\ord_D x_1  = k_1 \ge 1$. Since $D\subset \sing \partial$, we have that $\lambda_1\neq 0$. Thus, 
up to rescaling by a constant, we may assume that $\lambda_1 = k_1$.

We will show that if $k_i\coloneqq \ord_D x_i \ge 0$ then $\lambda_i = k_i$.  Indeed, in $\mathcal O_{X, D}$ we have $x_i^{k_1} = u_i x_1^{k_i}$ where $u_i$ is a unit 
in $\mathcal O_{X, D}$ for any $i\ge 2$.  Applying $\partial$ to both sides we see that 
\[k_1\lambda_i x_i^{k_1} = \partial(x_i^{k_1}) = \partial(u_i x_1^{k_i}) = k_1k_i u_ix_1^{k_i}+x_1^{k_i}\partial(u_i) = k_1k_ix_i^{k_1}+x_1^{k_i}\partial(u_i).\]
Since $D$ is a codimension one zero locus of $\partial$ it follows that $\ord_D\partial(u) \ge 1$ and so subtracting the far right term from the far left term we see that 
\[\ord_D((k_1\lambda_i-k_1k_i)x_i^{k_1})) = \ord_D(x_1^{k_i}\partial(u)) \ge k_1k_i+1 > k_1k_i =\ord_D x_i^{k_1}\]
which implies that $k_1(\lambda_i-k_i) = 0$ as required.
\end{proof}

We have a similar characterisation of non-singular vector fields from the perspective of the MMP.

\begin{theorem}\label{t_terminal}
Let $X$ be a normal variety and let $\mathcal F$ be a rank one foliation on $X$ such that 
$K_{\mathcal F}$ is Cartier. Let $P \in X$ be a closed point.  

Then the following are equivalent:

\begin{enumerate}
\item   $\mathcal F$ is terminal at $P$. 

\item $P$ is not contained in $\sing \mathcal F$.

\item There is an analytic open neighbourhood $U$ of $P$, 
and a holomorphic submersion $f\colon U \rightarrow B$
such that $\mathcal F\vert_U$ is induced by $f$.

\item $P$ is not invariant by $\mathcal F$.
\end{enumerate}
\end{theorem}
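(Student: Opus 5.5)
Since $K_{\mathcal F}$ is Cartier, we work on a germ $P\in X$ where $\mathcal F$ is generated by a single vector field $\partial$, with $\sing\partial$ of codimension at least two. The equivalences will be proved in the cycle (2)$\Rightarrow$(3)$\Rightarrow$(4)$\Rightarrow$(1)$\Rightarrow$(2). The step I expect to be the main obstacle is (2)$\Rightarrow$(3), because $X$ is not assumed smooth.

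\emph{(2)$\Rightarrow$(3).} If $P\notin\sing\partial$, there is $f\in\mathcal O_{X,P}$ with $\partial(f)(P)\neq 0$. The plan is to show that $\partial$ is then regular in a neighbourhood of $P$ and that the foliation is given by a product structure.

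First, choose an embedding $X\subset\mathbb C^N$ and a lift $\tilde\partial$. Since $\tilde\partial(P)\ne 0$, the flow-box theorem gives coordinates $(z_1,\dots,z_N)$ on $\mathbb C^N$ in which $\tilde\partial=\partial_{z_1}$.

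Next, the invariance $\tilde\partial(I_X)\subset I_X$ means $I_X$ is stable under $\partial_{z_1}$. By Taylor expansion in $z_1$ (the flow of $\partial_{z_1}$ preserves $X$), $I_X$ is generated by functions independent of $z_1$. Hence $X\cong \Delta\times Z$ locally, with $Z=X\cap\{z_1=0\}$.

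Finally, take $f\colon U\to B:=Z$ to be the projection. It is a holomorphic submersion in the sense of a smooth morphism with fibres $\Delta$, and $\mathcal F$ is induced by $f$. This is the step needing the most care: one must check that $Z$ may be singular but the projection is still the required submersion.

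\emph{(3)$\Rightarrow$(4).} If $\mathcal F|_U$ is given by the fibres of a submersion $U\cong\Delta\times B\to B$, then $\mathcal F$ is generated by $\partial_t$. We have $\partial_t(t-t(P))=1\notin\mathfrak m_P$, so $\mathfrak m_P$ is not invariant.

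\emph{(4)$\Rightarrow$(1).} Non-invariance of $P$ means $\partial(\mathfrak m_P)\not\subset\mathfrak m_P$, so $P\notin\sing\partial$. By the argument of (2)$\Rightarrow$(3), locally $X=\Delta\times Z$ and $\partial=\partial_t$. Let $E$ be any divisor over $X$ with centre containing $P$, and take a log resolution that, after an additional product-compatible step, we may assume is of the form $\mathrm{id}\times$(a modification of $Z$) near the generic point of $E$; only valuations centred at $P$ matter. The argument is then that of Example~\ref{ex_blow_up}: $\pi^*\partial_t$ acquires a pole along any exceptional divisor whose centre is not a union of flow lines. Concretely, I would show $a(E,\mathcal F)\ge 1$ for $E$ with centre $P$ by choosing $f=t$ with $\partial(t)=1$. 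Then $\ord_E(\partial t)=0<\ord_E(t)$, which, as in the proof of Proposition~\ref{prop_lc_iff_nonnilp}(2), forces $\pi^*\partial=t_E^{-b}\delta$ with $b\ge1$ and $E$ non-invariant. Divisors with centre through $P$ but larger than $P$ are handled by localising at the generic point of the centre, which is again a non-singular, non-invariant point.

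\emph{(1)$\Rightarrow$(2).} We argue by contraposition. If $P\in\sing\partial$ then $\partial(\mathfrak m_P)\subset\mathfrak m_P$. For any divisor $E$ with centre $P$ (for instance an exceptional divisor of the blow-up of $P$), if $f\in\mathfrak m_P$ then $\partial(f)\in\mathfrak m_P$. I would show the pulled-back field has no pole along $E$, i.e.\ $a(E,\mathcal F)\le 0$. Indeed, writing $\pi^*\partial=t_E^{-a}\delta$ with $a\ge1$ would give $\ord_E(\partial f)<\ord_E(f)$ for some $f\in\mathfrak m_P$ of minimal order. Taking $f$ a general element of $\mathfrak m_P$, $\ord_E(\partial f)\ge\min_{g\in\mathfrak m_P}\ord_E(g)=\ord_E(f)$, a contradiction. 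Hence $\mathcal F$ is not terminal at $P$.
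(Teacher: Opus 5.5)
Your steps (2)$\Rightarrow$(3) and (3)$\Rightarrow$(4) are fine. You should say explicitly that you read (2) as $P\notin\sing\partial$: with the Pfaff-field definition of $\sing\mathcal F$, only the inclusion $\sing\mathcal F\subseteq\sing\partial$ is immediate. However, both steps that involve discrepancies have genuine gaps.

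In (1)$\Rightarrow$(2) you claim that $a(E,\mathcal F)\le 0$ for \emph{every} divisor $E$ centred at $P\in\sing\partial$. Your justification is that a pole of $\pi^*\partial$ along $E$ would force $\ord_E(\partial f)<\ord_E(f)$ for some $f\in\mathfrak m_P$ of minimal $E$-order. Both the claim and the justification are false. Take $\partial=y\partial_x+x^2\partial_y$ on $\mathbb C^2$ with $P=0$, so $P\in\sing\partial$. Let $E=\{t=0\}$ in the chart $x=t^2u$, $y=t$ of the $(2,1)$-weighted blow-up. Then $\pi^*\partial=t^{-1}\bigl(t^5u^2\partial_t+(1-2t^4u^3)\partial_u\bigr)$, so $a(E,\mathcal F)=1>0$. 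An order drop occurs for $f=x$, since $\ord_E x=2$ and $\ord_E\partial x=\ord_E y=1$, but $x$ is not of minimal order. For every $f$ with $\ord_E f=1$ one has $\ord_E\partial f\ge 1$ trivially, because $\partial f\in\mathfrak m_P$. To fix this, use the specific divisor you mention in passing and argue by lifting rather than by orders. Since $\partial(\mathfrak m_P)\subset\mathfrak m_P$, the identity $\partial(g/g_0)=\partial g/g_0-(g/g_0)(\partial g_0/g_0)$ shows that $\partial$ preserves every chart ring $\mathcal O_X[\mathfrak m_P/g_0]$ of $\mathrm{Bl}_{\mathfrak m_P}X$. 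By Seidenberg's theorem it extends to the normalisation. Hence $\pi^*\partial$ is regular along every exceptional divisor $E$ of the normalised blow-up, and $a(E,\mathcal F)\le 0$.

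In (4)$\Rightarrow$(1), write $P=(0,Q)\in\Delta\times Z$ and $\pi^*\partial=s^{-b}\delta$ with $\delta$ saturated near a general point of $E=\{s=0\}$. The inequality $\ord_E(\partial t)=0<\ord_E(t)$, together with the reasoning of Proposition~\ref{prop_lc_iff_nonnilp}(2), only yields a disjunction: \emph{either} $b\ge 1$, \emph{or} $b=0$ and $E$ is not $\delta$-invariant. When $\ord_E(t)=1$ the second alternative is compatible with $\partial t=1$, so the function $t$ alone does not suffice. Your conclusion that $E$ is non-invariant is also the opposite of the truth. The exceptional divisor in Example~\ref{ex_blow_up} is invariant, and so is every exceptional divisor centred at a point where $\mathcal F$ is smooth. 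The missing ingredient is the first integrals supplied by the product structure. Pick $0\neq g\in\mathfrak m_{Z,Q}$, which is possible since $\dim Z\ge 1$ whenever exceptional divisors exist. Then $\partial g=0$ and $k:=\ord_E g\ge 1$. Writing $\pi^*g=s^kw$ with $w$ a unit at a general point of $E$, the identity $0=\delta(s^kw)=s^{k-1}\bigl(kw\,\delta(s)+s\,\delta(w)\bigr)$ forces $s\mid\delta(s)$, i.e.\ $E$ is invariant. Consequently $\ord_E(\pi^*\partial f)\ge\ord_E(f)-b$ for all $f$, and taking $f=t$ gives $a(E,\mathcal F)=b\ge\ord_E(t)\ge 1$. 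Finally, drop the reduction to resolutions of the form $\mathrm{id}\times(\text{modification of }Z)$. Such modifications only extract divisors whose centres are unions of flow lines, so they never produce a divisor centred at the point $P$.
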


\begin{proof}
See \cite[Lemma 2.9]{CS20}.
\end{proof}

We now consider a generalisation of Theorem \ref{t_terminal} to pairs:

\begin{lemma}
\label{lem_ev_trans}
Let $X$ be a normal variety. 
Let $\mathcal F$ be a rank one foliation on $X$ such that $K_{\mathcal F}$ is Cartier, let $D$ be a non-zero reduced Cartier divisor and let $x \in X$ be a point.

Then the following are equivalent:
\begin{enumerate}
\item  There exist suitable holomorphic coordinates near $x$ such that 
$X \cong \mathbb D \times Y$,
where $\mathbb D$ is the unit disc, 
 $\mathcal F$ is induced by $\mathbb D \times Y \to Y$ and $D = \{0\} \times Y$. 
\item There exists a local generator $\partial$ of $\mathcal F$ and a local equation $f = 0$ of $D$ such that $\partial(f) \in \mathcal O_{X, x}$ is a unit.
\item $(\mathcal F, D)$ is log canonical in a neighbourhood of $x \in X$.
\end{enumerate}
\end{lemma}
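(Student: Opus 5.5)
I will prove the cycle (1) $\Rightarrow$ (2) $\Rightarrow$ (1) and (2) $\Leftrightarrow$ (3). Since $K_{\mathcal F}$ is Cartier, near $x$ the foliation is generated by a single vector field $\partial$. Let $f$ be a local equation of $D$.

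\textbf{(1) $\Rightarrow$ (2).} In the product coordinates, $\partial = u\,\partial_t$ with $u$ a unit and $f = t$, where $t$ is the coordinate on $\mathbb D$. Hence $\partial(f) = u$ is a unit.

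\textbf{(2) $\Rightarrow$ (1).} Set $g := f/\partial(f)$, so that $\partial(g) \equiv 1 \bmod (f)$. Better, I will rectify directly. Since $\partial(f)(x) \neq 0$, the point $x \notin \sing \partial$. Consider the holomorphic flow $\Phi \colon \mathbb D \times D \to X$, $(s, y) \mapsto \exp(s\partial)(y)$ (after embedding $X$ locally and lifting $\partial$ tangentially). The fact that $\partial$ is transverse to $D$ (i.e. $\partial(f)$ is a unit) should give that $\Phi$ is a local isomorphism onto a neighbourhood of $x$. This identifies $X \cong \mathbb D \times D$ with $\partial = \partial_s$, so $\mathcal F$ is induced by the projection to $Y := D$ and $D = \{0\} \times Y$.

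Care is needed since $X$ may be singular. I would argue on completions: the map $\widehat{\mathcal O}_{D,x}[[s]] \to \widehat{\mathcal O}_{X,x}$ defined by the formal flow $h \mapsto \sum \frac{s^k}{k!}\partial^k(\tilde h)$ has inverse given by $h \mapsto \sum_k \frac{(-f')^k}{k!}\partial^k h|_D$, where $f' := f/\partial(f)$ is adjusted so that $\partial(f')=1$. Solving $\partial(f')=1$ is possible by a standard iteration, using that $\partial(f)$ is a unit. Convergence then follows from the analytic flow, i.e. Cauchy--Kovalevskaya applied to the lift in $\mathbb C^N$, which preserves $X$.

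\textbf{(1)/(2) $\Rightarrow$ (3).} In the product model, any birational $\pi\colon X'\to X$ has $\mathcal F_{X'}$ induced by $X' \to Y$. Discrepancies can be computed after base change to a resolution of $Y$, or directly: the pair is locally a product of $(\partial_t, \{t=0\})$ with the trivial rank-zero structure on $Y$. In particular $\partial$ preserves $\mathfrak m$ nowhere near $D$, and $t\partial_t$ generates $\mathcal F$ with associated pair $(\mathcal F, D)$. Since the linear part of $t\partial_t$ has eigenvalue $1 \neq 0$, Proposition \ref{prop_lc_iff_nonnilp} applied to the derivation $t\partial$ (whose associated foliated pair is exactly $(\mathcal F, D)$) gives log canonicity.

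\textbf{(3) $\Rightarrow$ (2).} This is the main step. Consider $\delta := f\partial$, whose associated foliated pair is $(\mathcal F, D)$ (here $\partial$ has no codimension one zeros, since $K_{\mathcal F}$ is Cartier and $\partial$ generates). By (3) and Proposition \ref{prop_lc_iff_nonnilp}, the linear part of $\delta$ at $x$ is non-nilpotent. We have $\delta(h) = f\partial(h)$. If $\partial(f) \in \mathfrak m_x$, I would argue that the linear part of $\delta$ on $\mathfrak m/\mathfrak m^2$ is nilpotent. Its image lies in the span of $\bar f$, and $\delta(f) = f\partial(f) \in \mathfrak m^2$. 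So the linear map has rank at most one with image killed by it, hence its square is zero: nilpotent. This gives a contradiction.

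The remaining case is $f \in \mathfrak m^2$, where the linear part is zero, so again not log canonical. Hence $\partial(f)$ is a unit. Lemma \ref{lem_codim_one_zero_ss} provides an alternative route via integer eigenvalues. The main obstacle is the analytic rectification in (2) $\Rightarrow$ (1) on a singular $X$.
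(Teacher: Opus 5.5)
Your proposal is correct and follows essentially the same route as the paper. For (3)$\Rightarrow$(2) you use $\delta=f\partial$, whose linear part has image in $\mathbb C\bar f$ and kills $\bar f$ when $\partial(f)\in\mathfrak m$, hence is nilpotent, contradicting Proposition~\ref{prop_lc_iff_nonnilp}; (2)$\Rightarrow$(1) is the rectification of the non-singular field $\partial$ transverse to $D$; and (1)$\Rightarrow$(3) is the easy direction. One simplification in (2)$\Rightarrow$(1): just replace $\partial$ by the generator $\partial/\partial(f)$, so that $\partial(f)=1$, as the paper does, rather than adjusting $f$ by solving $\partial(f')=1$ through an iteration.
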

\begin{proof}
This is essentially \cite[Fact I.1.7]{McQuillan08}, but we briefly explain the main ideas now.  (1) easily implies (3).  (3) implies (2) by \cite[Remark 2.4]{Panazzolo}. Indeed, suppose for the sake of contradiction that $\partial(f) \in \mathfrak m$ and so if $\delta = f\partial$ we have $\delta(f) \in \mathfrak m^2$.  On one hand, since $(\mathcal F, D)$ is log canonical, the vector field $\delta$ has non-nilpotent linear part.  On the other hand, for any $h \in \mathcal O_{X, x}$, $\delta(h) \in (f)$ and $\delta(hf) = f\delta(h) + h\delta(f) \in \mathfrak m^2$, which implies that for any $h \in \mathfrak m$, \[
\delta^2(h) = \delta(h'f) = f\delta(h')+h'\delta(f) \in \mathfrak m^2\] and so the linear part of $\delta$ is nilpotent, a contradiction.  Finally, to see that (2) implies (1) we note that 
if $\partial(f)$ is a unit, then $\partial$ is non-singular and, after possibly rescaling $\partial$, we may assume that $\partial f=1$. 
 Hence 
in suitable coordinates we have $X \cong \mathbb D \times Y$ and $\partial= \partial_t$ where $t$ is the parameter on $\mathbb D$. 
Since $\partial_t f=1$, 
it follows that $f = t+\phi(y)$ where $\phi$ does not depend on $t$. 
Setting $t':=f=t+\phi(y)$ gives a further change of coordinates, after which
$D=\{t'=0\}$ and $\partial=\partial_{t'}$, proving (1).
\end{proof}

Let $X$ be a normal variety. 
Let $\mathcal F$ be a rank one foliation on $X$ such that $K_{\mathcal F}$ is $\mathbb Q$-Cartier, let $D$ be a reduced $\mathbb Q$-Cartier divisor.
If, up to a quasi-\'etale cover, any one of the equivalent conditions of Lemma \ref{lem_ev_trans} holds, we say that $D$ is {\bf transverse} to $\mathcal F$ at $x$.
If this holds for all $x \in X$ we say that $D$ is {\bf everywhere transverse} to $\mathcal F$.

\begin{remark}
\label{rem_1}
Let $\mathcal F$ be a rank one foliation such that $K_{\mathcal F}$ is $\mathbb Q$-Cartier and let $D$ be an effective Weil divisor such that $D$ is $\mathbb Q$-Cartier.
 There exists a quasi-\'etale cover $\sigma \colon Y \to X$ such that $\sigma^*K_{\mathcal F}$ and $\sigma^*D$
are both Cartier.  Since log canonicity is preserved by quasi-\'etale covers, cf. \cite[Proposition 5.20]{KM98} and \cite[Lemma 2.20]{SS23},
Lemma \ref{lem_ev_trans} shows that if $(\mathcal F, D)$ is log canonical, then $D$ is transverse to $\mathcal F$ at $x$.
In particular, if $(\mathcal F, D)$ is log canonical, then $D$ is reduced and each connected component of $D$ is irreducible.
\end{remark}

\begin{remark}
\label{rem_2}
We remark that Proposition \ref{prop_lc_iff_nonnilp}, 
Lemma \ref{lem_codim_one_zero_ss}, 
Theorem \ref{t_terminal}, 
Lemma \ref{lem_ev_trans} and Remark \ref{rem_1}
all hold in the setting where $X$ is the spectrum of a complete local ring (or the formal completion of a variety $X$ at a  point $x \in X$) by replacing ``holomorphic'' by ``formal'' and by replacing the unit disc $\mathbb D$
by its formal completion at $0 \in \mathbb D$.
\end{remark}

Of course, if $K_{\mathcal F}$ and $D$ are Cartier then $K_{\mathcal F}+D$ is Cartier, however frequently in MMP type arguments it is only natural to ask that $K_{\mathcal F}+D$ is Cartier
(or $\mathbb Q$-Cartier). We explain 
a generalisation of Lemma \ref{lem_ev_trans} to this situation:

\begin{proposition}
\label{prop_codim_1_zero}
Let $x \in X$ be a germ of a normal variety, let $\mathcal F$ be a rank one foliation and let $D$ be a reduced Weil divisor such that $K_{\mathcal F}+D$ is Cartier and $(\mathcal F, D)$ is log canonical.   

Then the following hold:
\begin{enumerate}
\item There exists a small modification $b\colon X' \to X$ such that $D'\coloneqq b_*^{-1}D$ is $\mathbb Q$-Cartier and $-D'$ is $b$-ample.  In particular, $(\mathcal F' \coloneqq b^{-1}\mathcal F, D')$ is log canonical, i.e., $D'$ is everywhere transverse to $\mathcal F'$.

\item There exists a generator $V \in T_{\mathcal F}(-\log D)$ (i.e. a section of $T_{\mathcal F}$ which vanishes along $D$) such that the associated derivation $\partial:=\partial_V$ is a radial vector field, i.e.
in appropriate (holomorphic) coordinates $\partial = \sum n_ix_i\partial_{x_i}$ where $n_i \in \mathbb Z_{\ge 0}$.
\end{enumerate}
\end{proposition}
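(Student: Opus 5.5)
The plan is to prove (2) first, by a direct analysis of a local generator, and then obtain (1) as a weighted blow-up read off from the radial normal form.

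\emph{Step 1: a vector field vanishing on $D$.} Since $K_{\mathcal F}+D$ is Cartier, the sheaf $\mathcal O_X(-K_{\mathcal F}-D)$ is invertible. It is the reflexive hull of $T_{\mathcal F}(-D)$, and its sections are vector fields of $\mathcal F$ vanishing along $D$, i.e.\ sections of $T_{\mathcal F}(-\log D)$. Choose a local generator $V$ and set $\partial:=\partial_V$. The foliated pair associated to $\partial$ is exactly $(\mathcal F,D)$, because the codimension one part of $\sing\partial$ is $D$. Hence $\partial$ is a log canonical vector field whose singular locus has a codimension one component. Lemma~\ref{lem_codim_one_zero_ss} then shows that, after rescaling, $\partial_S=\sum n_ix_i\partial_{x_i}$ with $n_i\in\mathbb Z_{\ge 0}$ in formal coordinates.

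\emph{Step 2: $\partial$ is semisimple and convergent.} This is where I expect the main difficulty. We need $\partial_N=0$ and the linearisation to be holomorphic. With nonnegative integer eigenvalues we are in the Poincaré domain up to the zero eigenvalues, so I would argue as follows.

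First show $\partial_N=0$ formally. Since $\partial_S$ vanishes to order exactly one along $D$ and $\partial$ has the same codimension one zero locus (Lemma~\ref{l_jordan}), write $\partial=\partial_S+\partial_N$ with $[\partial_S,\partial_N]=0$. In the coordinates above, $\partial_N$ is a sum of resonant monomial fields $x^{\mathbf m}\partial_{x_k}$ with $\langle n,\mathbf m\rangle=n_k$. The condition that every such term vanishes along $D$ exactly as $\partial_S$ does, combined with log canonicity, should force these terms to vanish. To check this, I would test against weighted blow-ups with weights $n_i$, using the argument of Proposition~\ref{prop_lc_iff_nonnilp}(1). If a resonant nilpotent term survived, a suitable weighted blow-up would extract a divisor of discrepancy $<-\epsilon(E)$.

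Next, convergence follows from the fact that integer eigenvalues with the resonances controlled give a Poincaré--Dulac normal form, which is convergent. Alternatively, one can use that $\partial$ generates a $\mathbb C^*$-action. I would try to show directly that the flow of $\partial$ is periodic with period $2\pi i$, and then linearise the resulting holomorphic $\mathbb C^*$-action by Cartan's lemma.

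\emph{Step 3: the small modification.} With $\partial=\sum n_ix_i\partial_{x_i}$ radial, $X$ is (analytically) the germ of an affine variety with a good $\mathbb C^*$-action. The divisor $D$ lies in the fixed locus, on the side where the weights are positive. Take $b\colon X'\to X$ to be the weighted blow-up / quotient construction associated to this grading, namely the relative $\operatorname{Proj}$ of $\bigoplus_{m\ge0}\mathcal O_X(-mD)$. Finite generation of this algebra follows from the grading. Because $D$ is not $\mathbb Q$-Cartier only along the fixed locus, $b$ is small. By construction $-D'$ is $b$-relatively ample, and $D'$ is $\mathbb Q$-Cartier.

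Finally, $K_{\mathcal F'}+D'=b^*(K_{\mathcal F}+D)$ because $b$ is small, so $(\mathcal F',D')$ is log canonical. Remark~\ref{rem_1} then gives that $D'$ is everywhere transverse to $\mathcal F'$.
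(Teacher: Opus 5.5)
Your Step~2 has a genuine gap: the claim it aims at is false as stated. For an arbitrary generator $V$ of $T_{\mathcal F}(-\log D)$, the nilpotent part need not vanish. Take $X=\mathbb C^2$, let $\mathcal F$ be generated by $\partial_x$, and let $D=\{x=0\}$. This $D$ is transverse, so $(\mathcal F,D)$ is log canonical. The generator $\partial=(1+y)x\partial_x$ has $\partial_S=x\partial_x$ and $\partial_N=xy\partial_x\neq 0$, which is a resonant term vanishing along $D$. Discrepancies depend only on the pair $(\mathcal F,D)$, not on the choice of generator. So no weighted blow-up computation can force $\partial_N=0$, and the mechanism you propose cannot work.

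What is true, and what you actually need, is that $\partial_S$ is tangent to $\mathcal F$, i.e.\ $\partial=u\,\partial_S$ for a formal unit $u$; one then changes the generator. Even granting this, your convergence argument fails. A zero eigenvalue (unavoidable here once $\dim X\ge 2$) puts $0$ in the convex hull of the spectrum, so you are not in the Poincar\'e domain and Poincar\'e--Dulac gives no convergence. Moreover, the flow of a general generator is not periodic: for $(1+y)x\partial_x$ the period $2\pi i/(1+y)$ varies from leaf to leaf. So the $\mathbb C^*$-action route presupposes the correct holomorphic generator, which is exactly what has to be proved.

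The paper closes this gap by reversing your order and proving (1) first, using only the formal semisimple part. By Proposition~\ref{prop_lc_iff_nonnilp}, $\partial_S=\sum n_ix_i\partial_{x_i}$ is log canonical. On $\hat X=\Spec\widehat{\mathcal O}_{X,x}$, take the weighted blow-up $b$ of $\{x_1=\dots=x_q=0\}$ with weights $n_i$. This gives $(\mathcal G,F)$ with $K_{\mathcal G}+F=b^*(K_{\mathcal F_S}+D_S)$, so $(\mathcal G,F)$ is log canonical. By Remark~\ref{rem_1} the $\mathbb Q$-Cartier divisor $F$ is then irreducible, so $F=D'$ and $b$ is small. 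This gives finite generation of $\bigoplus_m\mathcal O_{\hat X}(-mD)$, which descends to $X$ by \cite[Proposition~6.6]{KM98}.

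Only then is (2) proved, holomorphically. Since $D'$ is everywhere transverse, there is a holomorphic first integral $g\colon U\to D'$ near $D'$. The field $\partial_S$ is tangent to the fibres of $g$, hence to $\mathcal F$, and pulling back functions from $D'$ produces holomorphic eigenfunctions.

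Your Step~3 lacks the same ingredient. Smallness does not follow from ``$D$ fails to be $\mathbb Q$-Cartier only along the fixed locus''. You need the exceptional divisor of the weighted blow-up to be irreducible, and that is where log canonicity and Remark~\ref{rem_1} enter. The grading by itself only gives finite generation of the weight-filtration algebra. That algebra coincides with $\bigoplus_m\mathcal O_X(-mD)$ only once smallness is known.
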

\begin{proof}
Let us first choose a generator $V$ of $T_{\mathcal F}(-\log D)$ and write $\partial:=\partial_V = \partial_S+\partial_N$ as the sum of its semi-simple and nilpotent parts.  
Let $x_1, \dots, x_N$ be a basis of the Zariski tangent space of $x \in X$ so that we have an embedding $X \subset \mathbb C^N$ and a lift of $\partial$ to a vector 
field on $\mathbb C^N$.
By Lemma \ref{lem_codim_one_zero_ss} we know that (up to rescaling by a constant), in appropriate (formal) coordinates $\partial_S = \sum_{i = 1}^q n_i x_i\partial_{x_i}$ where $n_i \in \mathbb Z_{>0}$.
Note that $\{x_1 = \dots = x_q = 0\} \subset \sing \partial $.

By \cite[Lemma 6.2]{KM98}, the first claim of Item (1) is equivalent to  the $\mathcal O_X$-algebra $\bigoplus_{m \ge 0} \mathcal O_X(-mD)$ being finitely generated.  We will thus verify that 
this algebra is finitely generated.
Let $\hat X := \Spec \widehat{\mathcal O}_{X, x}$ where $\widehat{\mathcal O}_{X, x}$ 
is the completion of $\mathcal O_{X, x}$ at its maximal ideal.  Let $b\colon X' \to \hat X$ be the blow-up in $\{x_1 = \dots =x_q = 0\}$ with weights $n_1, \dots, n_q$ respectively and let $D'\coloneqq b_*^{-1}D$. 
Denote by $b'\colon \widetilde{\mathbb C^N} \to \hat{\mathbb C}^N$ the blow-up of $\hat{\mathbb C}^N$ in $\{x_1 = \dots = x_q = 0\}$ with weights $n_1, \dots, n_q$, where $\hat{\mathbb C}^N  := \Spec \mathbb C[[x_1, \dots, x_N]]$.
Let $E = \exc b'$ and let $F = E|_{X'}$.  A direct calculation as in Example \ref{ex_blow_up} shows that
$b'^*(\sum_{i = 1}^q n_i x_i\partial_{x_i})$ vanishes along $E$. Hence, 
$\delta := b^*\partial_S$ vanishes along $F$.  We claim that $\delta$ defines a foliation $\mathcal G$ on 
$X'$ 
such that  $(\mathcal G, F)$ is log canonical.  Indeed, 
by Proposition \ref{prop_lc_iff_nonnilp},
the vector field $\partial_S$ defines a log canonical foliated pair $(\mathcal F_S, D_S)$ on $\hat X$ and $K_{\mathcal G}+F = b^*(K_{\mathcal F_S}+D_S)$ by construction, hence $(\mathcal G, F)$ is log canonical.

  By assumption $D'$ is contained in the support of $ F$ and since $F$ is $\mathbb Q$-Cartier, Remark \ref{rem_1} implies that $F$ is irreducible.
  We deduce from this that $b\colon X' \to \hat X$ does not extract any divisors, i.e.  $b$ is a small modification, and that $D' = F$.
By \cite[Lemma 6.2]{KM98}, it follows that 
 $\bigoplus_{m \ge 0} \mathcal O_{\hat{X}}(-mD)$ is finitely generated.  Thus, \cite[Proposition 6.6]{KM98}
implies that $\bigoplus_{m \ge0} \mathcal O_X(-mD)$ is finitely generated, as required.
  The claim that $(\mathcal F', D')$ is log canonical follows because it is a small, and hence crepant, modification of a log canonical pair.  This proves  (1).

We now prove  (2). 
The fact that $\partial$ can be put into the claimed form (with a holomorphic change of coordinates) follows by observing that in an analytic neighbourhood $U$ of $D'$, 
$\mathcal F'$ admits a holomorphic first integral $g\colon U \to D'$.  Note that in fact since every $\mathcal F$-invariant subvariety is also $\mathcal F_S$-invariant, we 
have that $\mathcal F_S$ is tangent to the fibres of $g$, i.e., we have that $V_{\partial_S}$ is a (formal) section of $T_{\mathcal F}$. The existence of $g$ allows us to find holomorphic eigenfunctions of $\partial_S$, which in turn imply the existence of holomorphic coordinates as claimed.
\end{proof}

\begin{example}
We give two examples of log canonical vector fields on singular varieties with codimension one components of the zero set:
\begin{enumerate}
\item Fix $k\ge 2$. Let $X = \{xy+z^k  = 0\} \subset \mathbb C^3$ and let $\partial = kx\partial_x+z\partial_z$. 

\item Let $X = \{xy+zw = 0\} \subset \mathbb C^4$ and let $\partial = x\partial_x+z\partial_z$.  Note that in this case $\sing \partial$ is not a $\mathbb Q$-Cartier divisor.

\end{enumerate}
\end{example}

\subsection{Singularities of co-rank one foliations}
For co-rank one foliations, a suitable distinguished class of singularities has been identified: simple singularities.  Intuitively, foliations with simple singularities enjoy many of the nice
properties one would expect from varieties together with a simple normal crossing divisor.

One way to define simple singularities for co-rank one foliations is in terms of formal normal forms, cf.
\cite[Appendix]{cano04}:

\begin{definition}
\label{defn_simple}
Let $\mathcal F$ be a co-rank one foliation on a smooth variety $X$ of dimension $n$. 
We say that $p \in X$ is a {\bf simple singularity} for $\mathcal F$
provided that, in formal coordinates $x_1,\dots,x_n$ around $p,$ $N^*_{\mathcal F}$ is generated by a $1$-form which is 
in one of the following two forms, for some $1 \leq r \leq n$:

\begin{enumerate}
\item There are $\lambda_1,\dots,\lambda_r \in \mathbb C^*$, which satisfy the non-resonant condition and such that
$$\omega = x_1\cdots x_r\cdot\sum_{i = 1}^r \lambda_i \frac{dx_i}{x_i}.$$

\item There is an integer $k \leq r$ such that
$$\omega = x_1\cdots x_r\cdot \left(\sum_{i = 1}^kp_i\frac{dx_i}{x_i} + 
\psi(x_1^{p_1}\cdots x_k^{p_k})\sum_{i = 2}^r \lambda_i\frac{dx_i}{x_i}\right)$$
where $p_1,\dots,p_k$ are positive integers without a common factor, $\psi(s)$
is a formal power series which is not a unit, and the numbers $\lambda_2,\dots,\lambda_r \in \mathbb C^*$ satisfy the non-resonant condition.  
\end{enumerate}
\end{definition}

We say that the numbers  $\lambda_1,\dots,\lambda_l\in \mathbb C^*$ satisfy the {\bf non-resonant condition} if 
for any non-negative integers $a_1,\dots,a_l$ such that 
 $\sum a_i\lambda_i = 0$ we have that $a_i = 0$ for all $i=1,\dots,l$.

\medskip 

We have the following: 
\begin{proposition}
Let $X$ be a smooth variety and let $\mathcal F$ be a foliation with simple singularities.  

Then $\mathcal F$ has canonical singularities.
\end{proposition}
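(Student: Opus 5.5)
We prove the statement by induction on the number of blow-ups. Canonicity is a statement about every divisorial valuation $E$ over $X$, and each divisorial valuation with centre $Z\subset X$ is reached by a sequence of blow-ups of smooth centres. More precisely, by Zariski's argument one uses blow-ups of the closure of the current centre, resolving as needed. So the plan is to find a class $\mathcal S$ of pairs $(Y,\mathcal F_Y)$ such that:
\begin{itemize}
\item[(a)] $\mathcal S$ contains foliations with simple singularities;
\item[(b)] $\mathcal S$ is stable under blow-ups of suitable smooth centres;
\item[(c)] every such blow-up $p\colon Y'\to Y$ has $a(\exc p,\mathcal F_Y)\ge 0$.
\end{itemize}
Discrepancies add up along towers, since $K_{\mathcal F_{Y'}}=p^*K_{\mathcal F_Y}+a\,\exc p$, so (c) gives $a(E,\mathcal F)\ge 0$ for all $E$.

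\textbf{Step 1 (local computation).} The problem is formal-local at a generic point of the centre, and discrepancies can be computed on the formal completion. So we may assume $\mathcal F$ is given by one of the normal forms of Definition \ref{defn_simple}, times a trivial factor in the remaining coordinates $x_{r+1},\dots,x_n$. Write $\omega$ for the defining $1$-form. For a co-rank one foliation, $K_{\mathcal F}=K_X+N_{\mathcal F}$, i.e. $K_{\mathcal F}=K_X-\det N^*_{\mathcal F}$. Blow up a smooth centre $Z$ of codimension $c$ with exceptional divisor $E$. Then
\[ K_{Y'}=p^*K_X+(c-1)E,\qquad p^*\omega=t^{m}\omega', \]
where $\omega'$ is saturated, $t$ is a local equation of $E$, and $m$ is the order of vanishing of $p^*\omega$ along $E$. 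Hence
\[ a(E,\mathcal F)=c-1-m. \]
Since $\omega$ is the logarithmic form $\sum\lambda_i\,dx_i/x_i$ multiplied by $x_1\cdots x_r$, it is natural to take centres $Z$ that are intersections of coordinate hyperplanes $\{x_i=0\}$, $i\in I$, possibly together with other coordinates. The logarithmic form pulls back to a logarithmic form with a pole along $E$. We then expect $m=|I\cap\{1,\dots,r\}|-1$ when $E$ is invariant, giving $a=c-|I\cap\{1,\dots,r\}|\ge 0$. When $E$ is not invariant (the residue $\sum_{i\in I}\lambda_i$ vanishes or $Z\not\subset$ the singular set) we expect $m=|I\cap\{1,\dots,r\}|$ and $a\ge c-1-|I|+\dots\ge 0$. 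This is the first concrete computation to carry out, in a chart $x_i=u_1u_i$.

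\textbf{Step 2 (reducing general centres).} A general divisorial valuation has a centre that need not be adapted to the coordinates. The cleanest route is to observe that $\omega$ is, up to the factor, a logarithmic $1$-form with respect to the normal crossing divisor $\{x_1\cdots x_r=0\}$. This is the log-smooth analogue of the statement "snc pairs are lc" in \cite{KM98}. I would first handle the simpler case of a foliation $\mathcal F$ whose logarithmic conormal sheaf is a line subbundle of $\Omega^1_X(\log D)$ that is saturated everywhere. For such $\mathcal F$, we have $K_{\mathcal F}+ (\text{non-invariant part of }D)$ equal to $K_X+D-\det N^*_{\mathcal F}(\log D)$, and pulling back along any log resolution compares $\Omega^1(\log)$ functorially. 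This gives log canonicity by the snc computation. The extra non-resonance (respectively the non-unit $\psi$) condition is then used to upgrade from $\ge -\epsilon(E)$ to $\ge 0$.

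\textbf{Main obstacle.} The upgrade at invariant divisors $E$ is the hard part. The logarithmic argument only yields $a(E)\ge -1$, and we must show that $a(E)=-1$ never occurs. By the log-canonical lemma above, $a(E)\ge -1$ already forces $a\ge 0$ for invariant $E$, but that lemma presupposes lc rather than our weaker data. So we will argue directly that for invariant $E$ the pullback of the log form has no residue obstruction. If $a(E)<0$ for some invariant $E$, then $E$ would carry a residue $\sum k_i\lambda_i=0$ with $k_i=\ord_E x_i\ge 0$ not all zero. In case (1) this is excluded by non-resonance. In case (2), one checks that the $\psi$-term has positive order along $E$, so the residue is $\sum_{i\le k} k_ip_i>0$, again nonzero. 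Making this residue computation rigorous on an arbitrary model, by choosing a monomial-adapted toroidal modification dominating $E$, is where the real work lies.
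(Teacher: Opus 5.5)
Your proposal has a genuine gap. The key idea is right: write $\omega=x_1\cdots x_r\,\eta$ with $\eta$ logarithmic along the formal snc divisor $D=\{x_1\cdots x_r=0\}$, and use non-resonance to control residues. But the step you defer as ``where the real work lies'' is essentially the whole statement. (The paper itself only cites \cite[Lemma 2.9]{CS21}.)

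\textbf{Step 2 does not give log canonicity.} It does not follow from functoriality of log forms plus the snc computation. Let $E$ be a prime divisor on a log resolution $\pi\colon Y\to X$, and let $\mu_E\ge 0$ be the vanishing order of $\pi^*\eta$ along $E$ as a section of $\Omega^1_Y(\log D_Y)$. Then one finds $a(E,\mathcal F)+\epsilon(E)=\bigl(a(E,X)-\ord_E D+1\bigr)-\mu_E$. The snc property of $(X,D)$ only makes the bracket non-negative; nothing in your argument bounds $\mu_E$. This term is genuinely present: for $D=\emptyset$, $\omega=dz$ and $E$ over a centre inside the leaf $\{z=0\}$, one has $\mu_E=\ord_E z$, which is unbounded.

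What controls $\mu_E$ is exactly the residue $\rho_E$ of $\pi^*\eta$ along $E$. If $\rho_E\not\equiv 0$, then $\mu_E=0$, $E$ is invariant, $\ord_E\pi^*\omega=\ord_E D-1$, and directly $a(E,\mathcal F)\ge 0$. So no separate lc step and no ``upgrade'' is needed. Your upgrade is also aimed at the wrong divisors: log canonicity already gives $a(E)\ge 0$ at invariant $E$, and only leaves $a(E)=-1$ open at non-invariant $E$.

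No toroidal modification is needed either. In the DVR $\mathcal O_{Y,E}$, writing $\pi^*x_i=t^{k_i}u_i$ gives $\pi^*(dx_i/x_i)=k_i\,dt/t+du_i/u_i$. The real technical point is that the normal form exists only formally at a closed point $p$ of the centre, so you must base change to $\Spec\widehat{\mathcal O}_{X,p}$.

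\textbf{The residue analysis misses the delicate cases.} Put $k_i=\ord_E x_i$.
\begin{itemize}
\item In case (1), $\rho_E=\sum\lambda_ik_i$ vanishes precisely when all $k_i=0$, i.e.\ when the centre of $E$ is not contained in $D$. Your ``not all zero'' is unjustified there, and non-resonance says nothing. This case needs a separate argument. The formal singular locus lies in the codimension two strata of $D$, so $\mathcal F$ is smooth at the generic point of the centre, and one then uses that smooth foliations are canonical (\cite[Lemma 5.9]{Druel21}).
\item In case (2), set $x^p=x_1^{p_1}\cdots x_k^{p_k}$. Your claim that the $\psi$-term has positive order along $E$ fails when $k_1=\dots=k_k=0$ but $k_j>0$ for some $j>k$. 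Then $\ord_E\psi(x^p)=0$ and $\sum_{i\le k}p_ik_i=0$, so $\rho_E=\psi(x^p)|_E\cdot\sum_{j>k}\lambda_jk_j$. This is nonzero only because $\lambda_2,\dots,\lambda_r$ are non-resonant.
\end{itemize}

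\textbf{The blow-up induction of Step 1 is not usable as it stands.} Simple singularities are not preserved by blow-ups in arbitrary smooth centres. For example, blowing up $\{x=0,\ z=y^2\}$ for the foliation $dz$ on $\mathbb C^3$ gives, in the chart $z-y^2=xv$, the isolated singularity $d(xv+y^2)$. You also never show that every divisorial valuation is reached through centres that keep you inside a class $\mathcal S$ with property (c).
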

\begin{proof}
See \cite[Lemma 2.9]{CS21}.
\end{proof}
One important feature of simple singularities is that we have a very clear understanding of their separatrices. By a {\bf separatrix} of a co-rank one foliation $\mathcal F$ at a point $x \in X$ we mean a (possibly formal) divisor $x \in D$
such that $D$ is invariant by $\mathcal F$.  In the case that $D = \{f = 0\}$ and $\mathcal F$ is defined by a $1$-form $\omega$, the invariance condition can be phrased as the fact that $f$ divides $df\wedge \omega$.

In general, separatrices need not exist for co-rank one foliations.  One of the key ingredients in the study of the MMP for co-rank one foliations on threefolds is the fact that foliations with canonical singularities (on threefolds) always admit separatrices.
Indeed, the existence of separatrices for canonical foliations on threefolds allows to show that any curve $C \subset X$ tangent to $\mathcal F$ can be ``prolonged'' to a (possibly formal) invariant divisor $S$ containing $C$.  This is done by ``gluing'' separatrices of $\mathcal F$ for each $p \in C$ in a coherent way. We remark that this gluing is a delicate point:
a priori a formal separatrix is only defined at the formal completion of $X$ at a point $x$, and if a curve $C$ through $x$ is tangent to the foliation it is not clear if it is possible to extend the formal separatrix from the formal completion at $x$ to the formal completion at $C$.  Frequently, however, this is possible owing to an explicit analysis of the properties of simple singularities (see \cite[\S IV]{CC92} or \cite[\S 3.2]{CS21} for some results along these lines). 
The analysis of the pair $(X, S)$ is what allows us to construct the flip.

The existence of separatrices follows from two main ingredients.  The first is the existence of separatrices for {\bf non-dicritical singularities}
due to \cite{CC92} (we refer to \cite{CC92} for the definition of non-dicriticality for co-rank one foliation singularities).  This is a deep result and relies heavily on the reduction of singularities to simple singularities, and thus only applies in dimension at most $3$, although a version of this result has been achieved in arbitrary dimensions in \cite{CM92}.
The second main ingredient is that co-rank one canonical threefold singularities are non-dicritical.  This is proven in \cite[Theorem 1.6]{CS21} and uses the MMP in a central way.  It would be nice to prove this result in all dimensions.

We should also remark that the ``correct'' definition of non-dicritical
singularity for foliations of arbitrary rank, especially on singular varieties, is still obscure, but we refer to \cite[\S 4]{CC25} for some interesting results in this direction.

\begin{question}
    What is the correct definition of non-dicritical foliation singularity for foliations of arbitrary rank?
\end{question}

\begin{question}
\label{q_non_dic}
Let $X$ be a smooth (or normal) variety and let $\mathcal F$ be a foliation on $X$.  
Suppose that $\mathcal F$ has canonical singularities.  Does it have non-dicritical singularities?
\end{question}

We refer to \cite[Conjecture 4.2]{CS23a} and \cite[Theorem 2.1.9]{chen2023mmp} for closely related questions in the setting of algebraically integrable foliations; see also \cite[Corollary B]{bongiorno2026}. 

We remark that for rank one foliations there is a clear definition of non-dicritical singularity: on any birational modification, every exceptional divisor is invariant by the transformed foliation.  With this definition of non-dicritical, the answer to Question \ref{q_non_dic} is yes for rank one foliations in any dimension:

\begin{theorem}
\label{t_nd}
Let $X$ be a normal variety and let $\mathcal F$ be a rank one foliation on  $X$ with canonical singularities. 

Then $\mathcal F$ is non-dicritical. 
\end{theorem}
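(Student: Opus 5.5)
The plan is to argue by contradiction at a general point of the centre of a putative non-invariant exceptional divisor. We split into two cases according to whether that point lies in $\sing\mathcal F$: in the singular case we play the pole order coming from the discrepancy against the transversality of $\mathcal F$ to the divisor, and in the smooth case we use the local product structure of Theorem~\ref{t_terminal}.

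\emph{Setup and reductions.} Let $\pi\colon X'\to X$ be a birational morphism and $E\subset X'$ an exceptional prime divisor with centre $Z=\pi(E)$, so $\operatorname{codim} Z\ge 2$. Suppose for contradiction that $E$ is not $\mathcal F'$-invariant, where $\mathcal F'=\pi^{-1}\mathcal F$, i.e.\ $\epsilon(E)=1$. The question is local around a general point $x\in Z$.

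We first reduce to the case that $K_{\mathcal F}$ is Cartier. Take a local index one cover $\sigma\colon Y\to X$ for $K_{\mathcal F}$; it is quasi-\'etale, so every ramified divisor is exceptional over $X$. By Proposition~\ref{p_RH} and the standard comparison of discrepancies under finite covers, canonicity of $\mathcal F$ gives canonicity of $\sigma^{-1}\mathcal F$. We pick a divisor $E_Y$ over $Y$ lying over $E$ (after normalising the fibre product with $X'$). By Proposition~\ref{p_RH}(1), $E_Y$ is non-invariant if $E$ is. Hence we may assume $K_{\mathcal F}$ is Cartier, so that near $x$ the foliation $\mathcal F$ is generated by a single vector field $\partial$.

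\emph{Case 1: $x\notin \sing\mathcal F$.} By Theorem~\ref{t_terminal}, near $x$ we have $\mathcal F$ induced by a submersion $f\colon U\to B$ with $U\cong \mathbb D\times B$. Since $Z\subsetneq U$ and $Z\subset \mathbb D\times f(Z)$, the image $f(Z)$ is a proper closed subset $W\subsetneq B$. Then $f\circ\pi$ maps $E$ into $W$, so $E$ does not dominate $B$. Because $\mathcal F'$ is the foliation induced by the fibration $f\circ\pi$, the divisor $E$ is vertical and therefore invariant, a contradiction.

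\emph{Case 2: $x\in\sing\mathcal F$.} Here $\partial(\mathfrak m_x)\subset\mathfrak m_x$, and by genericity of $x$ we get $\partial(I_Z)\subset I_Z$, i.e.\ the ideal of $Z$ is preserved. By the Remark following the definition of log canonical vector fields, near a general point of $E$ we can write $\pi^*\partial = t^{-a}\delta$, where $t$ is a local equation of $E$, $\delta$ is holomorphic and saturated, and $a=a(E,\mathcal F)\ge 0$ by canonicity. Non-invariance of $E$ means $\delta(t)$ is a unit at a general point of $E$.

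Take $g\in I_Z$ with $k=\ord_E g\ge 1$, and write $g=u t^{k}$ with $u$ a unit near a general point of $E$. Then
\[
\delta(g)=t^{k-1}\bigl(k u\,\delta(t)+t\,\delta(u)\bigr)
\]
has $\ord_E$ exactly $k-1$. Consequently $\ord_E\partial(g)=k-1-a\le k-1$, while $\partial(g)$ still lies in $I_Z$ and still has a leading term of the same shape. Iterating $k$ times produces $\partial^{k}(g)\in I_Z$ with $\ord_E\partial^k(g)\le 0$. But every element of $I_Z$ vanishes along $E$, since $\pi(E)=Z$, so it has positive order along $E$. This is a contradiction.

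\emph{Main obstacle.} The step needing most care is the first one: making the reduction to $K_{\mathcal F}$ Cartier rigorous. This means checking that canonicity and non-invariance both transfer to a divisor over the index one cover. I would handle it via Proposition~\ref{p_RH} applied on a common resolution of the normalised fibre product, comparing discrepancies with the ramification along $E_Y$ and using that $\epsilon$ is preserved by part (1). A secondary point is verifying in Case 2 that $\partial(I_Z)\subset I_Z$ at a general point of $Z$. This should follow from $Z\subset\sing\partial$ near $x$ together with the Leibniz rule, or, if needed, from replacing $I_Z$ by the maximal ideal of the generic point of $Z$ in the local ring $\mathcal O_{X,Z}$.
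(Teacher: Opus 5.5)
Your proof is correct, but it takes a genuinely different route from the paper. The paper makes the same reduction to a local index one cover (quasi-\'etale, preserving canonicity and non-dicriticality) and then cites McQuillan--Panazzolo \cite[Corollary III.i.4]{MP13} for the Cartier case. You instead prove the Cartier case directly.
\begin{itemize}
\item Over the smooth locus you use the product structure of Theorem~\ref{t_terminal}, so every exceptional divisor is vertical.
\item Over $\sing\mathcal F$ you use a valuative pole-order computation.
\end{itemize}
Your Case~2 in fact proves something sharper. Suppose $E$ is non-invariant and its centre $Z$ satisfies $\partial(I_Z)\subset I_Z$. Choose a nonzero $g\in I_Z$ of minimal order $k=\ord_E g\ge 1$. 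Then $\partial(g)\in I_Z$ forces $k\le \ord_E\partial(g)=k-1-a(E,\mathcal F)$, so $a(E,\mathcal F)\le -1$. Canonicity therefore rules out such $E$. For log canonical foliations, dicritical divisors over $\sing\mathcal F$ have discrepancy exactly $-1$, which is what the radial vector field illustrates. This is the same valuative mechanism the paper uses in the proof of Proposition~\ref{prop_lc_iff_nonnilp}(2). The paper's route buys brevity and access to the McQuillan--Panazzolo framework. Yours buys an elementary, self-contained proof using only Theorem~\ref{t_terminal}, Proposition~\ref{p_RH} and the pole-order description of discrepancies.

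A few points should be tightened.
\begin{itemize}
\item In Case~1, $f(Z)$ is a proper subset of $B$ by dimension: after shrinking so that the projection is proper on $Z$, $\dim f(Z)\le\dim Z\le \dim X-2<\dim B$. The fact that $Z\subsetneq U$ is not the reason.
\item In Case~2, $\sing\mathcal F$ and $\sing\partial$ agree at closed points by Theorem~\ref{t_terminal} (2)$\Leftrightarrow$(4). Since $Z\subset\sing\partial$ and $I_Z$ is radical, you even get $\partial(\mathcal O_X)\subset I_Z$. This follows from the Nullstellensatz, not from the Leibniz rule.
\item The $k$-fold iteration is unnecessary; the minimal-order choice of $g$ gives the contradiction in one step.
\item Case~1 can be absorbed into the same computation. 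Take $g=f^*\phi$ with $\phi\neq 0$ vanishing on $f(Z)$. Then $\partial(g)=0$, which contradicts $\ord_E\partial(g)=k-1-a<\infty$ for non-invariant $E$, with no use of canonicity.
\end{itemize}
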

\begin{proof}
The question is local about any point $x \in X$, so we may freely replace $X$ by a neighbourhood of $x$ and we may assume that there exists $m>0$ such that $mK_{\mathcal F} \sim 0$. We may then take the index one cover associated to $K_{\mathcal F}$, $\sigma\colon X' \to X$ and note that $\sigma$ is quasi-\'etale.  Observe that $\sigma^{-1}\mathcal F$ is non-dicritical if and only if $\mathcal F$ is non-dicritical and that $\sigma^{-1}\mathcal F$ has canonical singularities. We may then apply  \cite[Corollary III.i.4]{MP13} to $\sigma^{-1}\mathcal F$ to conclude.
\end{proof}

\subsection{Singularities of algebraically integrable foliations}

Given a normal variety $X$ with an algebraically integrable foliation $\mathcal F$ such that $\mathcal F$ is induced by a rational map $f\colon X \dashrightarrow Z$, the problem 
of reducing the singularities of $\mathcal F$ is equivalent to that of reducing the singularities of the rational map $f$.  The natural target class of singularities is toroidal singularities, and due to 
\cite[Theorem 2.1]{AK00} we have the following reduction statement:

\begin{theorem}\label{t_AK}
Let $(X,B)$ be a  pair and let $f\colon X\dashrightarrow Z$ be a rational map between normal varieties.

Then there exist toroidal pairs   $(X',\Sigma_{X'})$ and $(Z',\Sigma_{Z'})$ and a diagram 

\[
\begin{tikzcd}
  X' \arrow[r, "\beta"] \arrow[d, "f'"'] & X \arrow[d, "f"] \\
  Z' \arrow[r, "\alpha"] &  Z
 \end{tikzcd}
\]

such that 
\begin{enumerate}
\item $\alpha$ and $\beta$ are birational projective morphisms,  
\item $f'\colon (X,\Sigma_X)\to (Z,\Sigma_Z)$ is a toroidal morphism;
\item $(Z',\Sigma_{Z'})$ is log smooth; and
\item the support of $\beta^{-1}_*B+\exc \beta$ is contained in $\Sigma_{X'}$.
%
\end{enumerate}
\end{theorem}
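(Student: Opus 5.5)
This statement is the weak toroidalisation (or weak semistable reduction) theorem of Abramovich--Karu. I would not reprove it from scratch; instead I would reduce to the case where $f$ is a morphism and $(X,B)$ has simple normal crossing boundary, and then follow the strategy of \cite{AK00}.

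\textbf{Step 1: reduction to a morphism.} Take a resolution of the indeterminacy of $f$ by a projective birational morphism $X_1\to X$, with $X_1$ smooth. Using resolution of singularities, arrange that the strict transform of $B$ together with the exceptional locus is simple normal crossing. Replacing $X$ by $X_1$ and $B$ by this total boundary, we may assume $f$ is a projective surjective morphism. We may also assume it has connected fibres, either by Stein factorisation or by including the finite part in the base change.

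\textbf{Step 2: reduction to a generically smooth situation over a toroidal base.} Choose a proper closed subset $\Sigma_Z\subset Z$ such that $f$ is smooth over $Z\setminus\Sigma_Z$ and the boundary restricted there is relatively snc. Resolve $Z$ by $\alpha_1\colon Z_1\to Z$ so that $\alpha_1^{-1}\Sigma_Z$ is snc. Then take the main component of $X\times_Z Z_1$ and resolve it again, keeping the total boundary snc.

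\textbf{Step 3: induction on relative dimension.} This is the heart of the proof. Following de Jong's strategy, fibre $X$ over $Z$ by a family of nodal curves after an alteration. The essential point for us is birationality: $\alpha,\beta$ must be birational, not alterations. So I would follow Abramovich--Karu's refinement using a Galois quotient. Take the alteration given by de Jong and Abramovich--de Jong, together with the group action. Toroidality is preserved under quotient by a finite group acting toroidally, which produces a birational toroidal model.

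Concretely, the induction proceeds as follows. Write $X\dashrightarrow P\to Z$, with $X\to P$ a family of curves of relative dimension one. Apply induction to $P\to Z$. Then, by the moduli of stable pointed curves, make $X\to P$ a toroidal (nodal) family over the toroidal $P$. Finally, compose the two toroidal morphisms; a composition of toroidal morphisms is toroidal.

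\textbf{Step 4: the boundary condition (4).} This should follow by including $\beta^{-1}_*B$ and all exceptional divisors in the boundary at every stage. Blow-ups and quotients only enlarge the boundary.

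\textbf{Main obstacle.} I expect the hardest step to be Step 3: producing a toroidal family of curves after a birational (rather than generically finite) base change, with group actions that remain toroidal. In practice I would simply cite \cite[Theorem 2.1]{AK00} for this step.
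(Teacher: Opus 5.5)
Your proposal is essentially the paper's argument: the paper gives no proof of its own and simply invokes \cite[Theorem 2.1]{AK00}, which is where your sketch ends up, and your outline of the Abramovich--Karu induction is a fair summary of how that result is proved. One small caution: in Step 1, the alternative of ``including the finite part in the base change'' is not available, because it would make $\alpha$ non-birational and so violate (1); if you want connected fibres you must use the Stein factorisation and then treat its finite part separately.
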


The relevance of toroidal singularities to the study of algebraically integrable foliations is explained by the following.

\begin{proposition}
Let $X$ be a normal variety and let $\mathcal F$ be a foliation.  Suppose that $\mathcal F$ is induced by a toroidal morphism $X\to Z$.

Then $\mathcal F$ admits log canonical singularities.
\end{proposition}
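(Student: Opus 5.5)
The plan is to reduce to a toric computation, using the fact that log canonicity is local and can be tested on toroidal charts. Since $f\colon (X,\Sigma_X)\to (Z,\Sigma_Z)$ is toroidal, every point $x\in X$ has a formal (or étale/analytic) neighbourhood isomorphic to a neighbourhood of a point in an affine toric variety $U_\sigma$. Under this isomorphism $f$ becomes a toric morphism $U_\sigma\to U_\tau$ induced by a lattice map $N_\sigma\to N_\tau$. The foliation induced by $f$ is then the toric foliation attached to the complex vector subspace $W=\ker(N_\sigma\otimes\mathbb C\to N_\tau\otimes \mathbb C)$. So the first step is to check that the discrepancies $a(E,\mathcal F)$ can be computed after passing to the formal completion, which is justified by Remark~\ref{rem_2}-type arguments. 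This reduces the statement to: a toric foliation $\mathcal F_W$ on a toric variety $X_\Sigma$ is log canonical.

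For the toric case, I would compute $K_{\mathcal F_W}$ explicitly. One shows that $T_{\mathcal F_W}$ is, on the torus, the trivial bundle $W\otimes\mathcal O$. Its saturation in $T_X$ gives
\[
K_{\mathcal F_W}=-\sum_{\rho\not\subset W}D_\rho,
\]
the sum running over the rays $\rho$ of the fan not contained in $W$. These are exactly the non-invariant torus-invariant divisors, since $D_\rho$ is $\mathcal F_W$-invariant if and only if $\rho\subset W$. Equivalently, $K_{\mathcal F_W}=-\sum_\rho \epsilon(D_\rho)D_\rho$. To verify this formula one works on the open torus orbit and in codimension one; it is a Lemma~\ref{l_invariant}-style computation with the log tangent sheaf $T_X(-\log \partial X)\cong N\otimes\mathcal O_X$.

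It then suffices to test discrepancies on toric blow-ups, since any divisorial valuation is dominated, for the purpose of bounding discrepancies, by toric ones after a log resolution of the toric boundary. For a toric birational morphism $\varphi\colon X_{\Sigma'}\to X_\Sigma$ given by a refinement, the same formula on $X_{\Sigma'}$ gives $K_{\mathcal F_{Y}}=-\sum_{\rho'}\epsilon(D_{\rho'})D_{\rho'}$. The pullback of $K_{\mathcal F_W}$ is computed by the piecewise linear support function $\psi$ that takes the value $-\epsilon(\rho)$ on the primitive generator of $\rho$. Hence
\[
a(E_{\rho'})=\psi(v_{\rho'})-\epsilon(\rho')\cdot(-1)\ \ge\ -\epsilon(\rho'),
\]
where $v_{\rho'}$ is the primitive generator of the new ray $\rho'$. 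The inequality follows because $\psi\le 0$ on the cone (the coefficients are $0$ or $-1$), together with the observation that $\psi(v)=0$ whenever $v\in W$. This last point needs care: if $v_{\rho'}\in W$ lies in a cone $\sigma$ all of whose rays outside $W$ contribute, I need to see that $\psi(v_{\rho'})\ge -1+\epsilon$. The argument is that the minimal face of $\sigma$ containing $v$, intersected with $W$, forces $v$ to be a combination of rays in $W$ when $W\cap\sigma$ is a face. This holds precisely because $f$ is toroidal (equidimensional-type condition).

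The main obstacle is the last step. One must show that for a toroidal morphism, $W\cap\sigma$ is spanned by rays of $\sigma$ lying in $W$, i.e. the invariant divisors are "saturated", so that $\psi|_{W\cap\sigma}\equiv0$. I would try to derive this from the fact that $\sigma$ maps into a cone $\tau$ of the base with $\sigma\cap W$ being the preimage of $0$. If this fails in general, I would instead only prove $a\ge -1$ (which holds trivially since $\psi\ge -1$ on primitive lattice points after normalizing) and invoke the earlier lemma that discrepancies $\ge -1$ already imply log canonicity.
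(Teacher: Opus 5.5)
Your strategy of passing to a toric chart and computing on toric valuations is the natural one; the paper itself gives no argument and only cites \cite[Lemma 3.1]{ACSS}. As written, however, the proposal has two genuine problems.

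\emph{The toric computation has its conventions reversed and does not establish the inequality.} For $\mathcal F_W$ with $W=\ker(N_\sigma\otimes\mathbb C\to N_\tau\otimes\mathbb C)$, the divisor $D_\rho$ is invariant if and only if $\rho\not\subset W$. The rays lying in $W$ are exactly the horizontal divisors, and these are transverse to the fibres. For instance, for $\mathbb C^2\to\mathbb C$, $(x,y)\mapsto y$, one has $W=\mathbb C e_1$, and $\{x=0\}$ is not invariant. Hence $K_{\mathcal F_W}=-\sum_{\rho\subset W}D_\rho$; your expression $-\sum_\rho\epsilon(D_\rho)D_\rho$ is right only because two errors cancel. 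Next, $K_{\mathcal F_Y}-\sum a(E)E=\varphi^*K_{\mathcal F}$ gives $a(E_{\rho'})=-\epsilon(\rho')-\psi(v_{\rho'})$, not $\psi(v_{\rho'})+\epsilon(\rho')$. As a check, take $T_{\mathcal F}=T_{\mathbb C^2}$, where every $\epsilon$ is $1$ and $\psi(1,1)=-2$: your formula gives $-1$ for the blow-up of the origin instead of $1$. With the correct sign the toric step is one line: $a(E_{\rho'})+\epsilon(\rho')=-\psi(v_{\rho'})\ge 0$, because $\psi$ is linear on each cone and $\le 0$ on its rays. Your ``main obstacle'' is an artefact of the sign error. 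The claim that $\psi=0$ on $W$ is false, since $\psi(v_\rho)=-1$ for $\rho\subset W$. The face property of $W\cap\sigma$ does hold for any toric morphism, because the image of $\sigma$ lies in the strongly convex cone $\tau$, but it bears on canonicity, not log canonicity. Your fallback is also false: $\psi\ge -1$ fails on primitive vectors. For example, $v_1+v_2$ in a smooth cone whose two rays are both non-invariant has $\psi=-2$.

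\emph{The reduction to toric valuations is unjustified, and this is the more serious gap.} For varieties this step is \cite[Corollary 2.31]{KM98} applied on a toric log resolution, but there is no foliated analogue you can simply quote. On a smooth toric model $Y$ you need that no divisor over $Y$, torus-invariant or not, has discrepancy $<-\epsilon$ for the sub-pair $(\mathcal F_Y,-\sum a(E)E)$. Since its coefficients are at most $\epsilon$, this amounts to log canonicity of $(\mathcal F_Y,\sum_{\rho\subset W}D_\rho)$ for the toric morphism $Y\to Z$. That is essentially the statement you are trying to prove, so the argument is circular exactly at its only non-combinatorial point. In rank one this step does follow from Proposition~\ref{prop_lc_iff_nonnilp}: at each point $\partial_w$ is either non-singular, or has non-zero semisimple linear part with eigenvalues $\langle m,w\rangle$ on the monomial generators. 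In higher rank no such linear criterion is known, so this is the step where you need an actual argument rather than an analogy with the case of varieties.
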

\begin{proof}
    See \cite[Lemma 3.1]{ACSS}.
\end{proof}
As a Corollary of the previous results, we obtain that if $\mathcal F$ is an algebraically integrable foliation on a normal variety $X$ then there exists a birational morphism $\beta\colon X'\to X$ such that $\beta^{-1}\mathcal F$ admits canonical singularities (see \cite[\S 3]{ACSS} 
and \cite[Proposition 5.6 and Theorem 5.12]{CC25}
for more details).

\section{Adjunction}
\label{s_adjunction}

Given a foliation $\mathcal F$ of rank $r$ on a smooth variety $X$ and a smooth codimension one
subvariety $D\subset X$ we can define a {\bf restricted foliation} $\mathcal F_D$ on $D$ (see Theorem \ref{t_adjunction} for a more general version).
Roughly speaking, the leaves of $\mathcal F_D$ are the leaves of $\mathcal F$ intersected
with $D$.  Thus, $\mathcal F_D$ is a foliation of rank $r-\epsilon(D)$.

In analogy with the {\bf adjunction formula} for singular
varieties (e.g. see \cite[\S 16]{Kollaretal}), we introduce a correction term $\mathrm{Diff}_D(\mathcal F) \geq 0$, 
called the {\bf different}.  With this correction term, we have a linear equivalence
\[(K_{\mathcal F}+\epsilon(D)D)\vert_D \sim K_{\mathcal F_D}+\mathrm{Diff}_D(\mathcal F).\]
We will show (cf. Proposition \ref{p_diff})  that if $P$ is a prime divisor contained in $\supp D \cap \sing \mathcal F$, then $\mult_P \Diff_D(\mathcal F)>0$.

\begin{theorem}[Adjunction]\label{t_adjunction} Let $X$ be a normal variety and let $\mathcal F$ be a foliation  of rank $r$ on $X$ such that $K_{\mathcal F}$ is $\mathbb Q$-Cartier.  Let $D\subset X$ be a  codimension one
subvariety such that $\epsilon(D)D$ is $\mathbb Q$-Cartier and let $\nu\colon W\to D$ be its normalisation. 

Then there exists a foliation $\mathcal F_W$ on $W$ of rank $r-\epsilon(D)$ and a $\mathbb Q$-divisor $\mathrm{Diff}_D(\mathcal F)\ge 0$ on $W$
such that 
\[(K_{\mathcal F}+\epsilon(D)D)\vert_W \sim K_{\mathcal F_W}+\mathrm{Diff}_D(\mathcal F).\]
\end{theorem}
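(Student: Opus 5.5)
We plan to construct $\mathcal F_W$ and the different in two stages: first on the open set where everything is smooth, then extending by saturation and reflexivity. Let $U\subset X$ be the largest open set such that $X$ and $D$ are smooth along $U\cap D$, $K_{\mathcal F}$ and $\epsilon(D)D$ are Cartier on $U$, and $\mathcal F$ is smooth at the generic point of every component of $D\cap U$. Since $W$ is normal, the complement of $\nu^{-1}(U)$ in $W$ has codimension at least two after we shrink $U$ by closed subsets of codimension $\ge 2$ in $D$. It therefore suffices to produce a foliation on $W$ and a $\mathbb Q$-linear equivalence on $\nu^{-1}(U)$, and then take closures: a foliation is determined by its restriction to a big open set via saturation, and Weil divisor classes extend uniquely.

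To define $\mathcal F_W$, we use the Pfaff field $\phi\colon\Omega^{r}_X\to\mathcal O_X(K_{\mathcal F})$. There are two cases.

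\emph{Invariant case, $\epsilon(D)=0$.} By Lemma~\ref{l_invariant}, $\phi|_D$ factors through $\Omega^r_D$. Composing with $\Omega^r_D\to\Omega^{[r]}_W$ (using $\nu$), we get a nonzero map $\Omega^{[r]}_W\to\nu^*\mathcal O_X(K_{\mathcal F})$, after dividing by torsion and reflexivising. Lemma~\ref{lem_linear_algebra} then recovers a rank $r$ distribution $T_{\mathcal F_W}\subset T_W$: it is the kernel of $v\mapsto v\wedge w$, which at a general point is the restriction of $T_{\mathcal F}$, and this is tangent to $D$ there.

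\emph{Non-invariant case, $\epsilon(D)=1$.} Set $T_{\mathcal F_W}$ to be the saturation of $T_{\mathcal F}|_W\cap T_W$ in $T_W$, which has rank $r-1$ at a general point. Equivalently, take the Pfaff field of $\mathcal F$ twisted by $\mathcal O(D)$ and contract with $df$, where $f$ is a local equation of $D$. That is, use the residue-type map $\Omega^{r-1}_X\to\mathcal O_X(K_{\mathcal F}+D)|_D$, $\beta\mapsto\phi(df\wedge\beta)/f$ restricted to $D$; it is well defined modulo $f$ and kills $df\wedge\Omega^{r-2}$, so it descends to $\Omega^{r-1}_D$.

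In both cases, integrability of $\mathcal F_W$ holds generically because the leaves of $\mathcal F_W$ are locally the intersections of leaves of $\mathcal F$ with $D$, by Frobenius. Integrability then holds everywhere, since the bracket map $\wedge^2T_{\mathcal F_W}\to T_W/T_{\mathcal F_W}$ lands in a torsion-free sheaf. Saturation holds by construction.

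The linear equivalence comes from the resulting nonzero morphism of rank-one reflexive sheaves
\[
\psi\colon \mathcal O_W(K_{\mathcal F_W})\longrightarrow \nu^*\mathcal O_X(K_{\mathcal F}+\epsilon(D)D).
\]
This morphism arises because the Pfaff field of $\mathcal F_W$ is, by Lemma~\ref{lem_linear_algebra}, the saturation of the map constructed above, so the constructed map factors through it. Any nonzero map of rank-one reflexive sheaves on a normal variety vanishes along an effective Weil divisor. We define $\Diff_D(\mathcal F)$ to be this divisor of zeros, so that $K_{\mathcal F_W}+\Diff_D(\mathcal F)\sim(K_{\mathcal F}+\epsilon(D)D)|_W$ and $\Diff_D(\mathcal F)\ge 0$ automatically. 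To handle the $\mathbb Q$-Cartier rather than Cartier hypothesis, we work with $m(K_{\mathcal F}+\epsilon(D)D)$ Cartier and define the different as $\tfrac1m$ of the zero divisor of $\psi^{\otimes m}$ on reflexive powers. Alternatively, we pass to a local index-one cover and use Proposition~\ref{p_RH} to check that the result is independent of $m$ and of the cover.

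The main obstacle is making the construction rigorous near points where $D$ is non-normal or $X$ is singular along $D$. There restriction to $W$ does not commute with taking reflexive hulls, so we must check that the restricted Pfaff map is nonzero and that its zero divisor is really effective of the claimed class, rather than merely a rational section. We plan to handle this by working on the big open set described at the start, where $\epsilon(D)D$ and $K_{\mathcal F}$ are Cartier near the codimension-one points of $W$. At such points $\mathcal O_W(K_{\mathcal F}+\epsilon(D)D)$ is a line bundle, so $\psi$ is a genuine morphism from a reflexive sheaf to an invertible one. Effectivity is then checked at each codimension-one point of $W$, where $W$ is a DVR; the $\mathbb Q$-Cartier case is reduced to this by taking $m$-th powers as above.
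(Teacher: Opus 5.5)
\paragraph{Verdict.} The core of your construction is the paper's, but your reduction of the general statement to that core is false, so as written the proposal does not prove the theorem.

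\paragraph{Where you agree with the paper, and where the reduction fails.} The paper's sketch only treats the case $W=D$ normal with $K_{\mathcal F}$ and $D$ Cartier, and defers the general case to \cite{CS25}. In that case your construction coincides with the paper's. In the invariant case you factor the Pfaff field through $\Omega^r_D$. In the non-invariant case you use the residue map $\alpha\mapsto \phi(df\wedge\tilde\alpha)/f|_D$. In both cases $\Diff_D(\mathcal F)$ is the zero divisor of the resulting nonzero map $\mathcal O(K_{\mathcal F_W})\to\mathcal O(K_{\mathcal F}+\epsilon(D)D)|_W$.

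You claim that $W\setminus\nu^{-1}(U)$ has codimension at least two in $W$, and that $K_{\mathcal F}$ and $\epsilon(D)D$ are Cartier near every codimension-one point of $W$. Neither holds. Normality of $X$ only gives $\operatorname{codim}_X \sing X\ge 2$. So $\sing X\cap D$, the non-normal locus of $D$, and the non-Cartier loci of $K_{\mathcal F}$ and $D$ can all be divisors on $W$.

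For example, take $X=\{xy=z^2\}$, $D=\{x=z=0\}$ and $\mathcal F=T_X$. The vertex is a codimension-one point of $W\cong\mathbb A^1$ where $X$ is singular and only $2D$ is Cartier. The different there is $\frac12[0]$, supported entirely on the locus you discard. So ``taking closures'' determines $\Diff_D(\mathcal F)$ only up to divisors supported on $W\setminus\nu^{-1}(U)$, which is exactly where the different lives. Your final paragraph proposes to handle the bad points on that same open set, so it does not address them.

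\paragraph{What is missing.} You need an argument that the induced map has no poles at these codimension-one points. There are two separate issues.

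First, where $K_{\mathcal F}+\epsilon(D)D$ is only $\mathbb Q$-Cartier, $\nu^*\mathcal O_X(K_{\mathcal F}+\epsilon(D)D)$ is not invertible. Then $\psi^{\otimes m}$ is a priori only a rational section, and defining the different as $\frac1m$ of its zero divisor presupposes effectivity. The tool used in the proof of Proposition~\ref{p_diff} is a local quasi-\'etale cyclic cover making $K_{\mathcal F}$ and $\epsilon(D)D$ Cartier. Combined with Proposition~\ref{p_RH}, it gives $e\,\mult_P\Diff_D(\mathcal F)=\mult_{P'}\Diff'+\epsilon(P)(e-1)\ge 0$, so effectivity descends from the cover. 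You invoke the cover only to check independence of choices.

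Second, even when everything is Cartier, your step ``composing with $\Omega^r_D\to\Omega^{[r]}_W$ we get a map $\Omega^{[r]}_W\to\nu^*\mathcal O_X(K_{\mathcal F})$'' runs in the wrong direction. A map out of $\nu^*\Omega^{r-\epsilon(D)}_D$ only induces a rational map out of $\Omega^{[r-\epsilon(D)]}_W$. Over the non-normal locus of $D$ the image of $\nu^*\Omega^{r-\epsilon(D)}_D$ is a proper subsheaf; for the cusp $y^2=x^3$ in $\mathbb C^2$ it is generated by $t\,dt$. For that cusp with $\mathcal F=T_X$ one computes $\psi(dt)=-t^2\,\nu^*(dx\wedge dy/f)$, which recovers the conductor $2[0]$. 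The absence of poles in general still requires an argument.

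As written, your proof establishes the formula only over the open part of $W$ where $X$ and $D$ are smooth and $K_{\mathcal F}$, $D$ are Cartier.
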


\begin{proof}[Sketch of the proof]
For simplicity we assume that $W$ is normal (i.e. $W=D$) and that $K_{\mathcal F}$ and $D$ are Cartier. We refer to \cite{CS25} for the more general proof.  
We distinguish two cases:

\medskip 

We first assume that $D$ is $\mathcal F$-invariant.  By Lemma \ref{l_invariant} the map $T_{\mathcal F}|_D\to {T_X}|_D$ factors through $T_D$. Let $\eta\colon T_{\mathcal F}|_D \to T_D$ be the induced map and let $T_{\mathcal F_D}$ be the image of $\eta$. Around a general point of $D$, we have a commutative diagram
\[
\begin{tikzcd}
T_{\mathcal{F}}|_D \arrow[r] \arrow[d] & T_X|_D \\
T_{\mathcal{F}_D} \arrow[r] & T_D \arrow[u]
\end{tikzcd}
\]
Then $T_{\mathcal F_D}\subset T_D$ defines a foliation on $D$ of rank $r$. This induces a commutative diagram on $D$: 
\[
\begin{tikzcd}
\Omega^r_X|_D \arrow[r] \arrow[d] & \mathcal{O}_D(K_{\mathcal{F}}) \\
\Omega^r_D \arrow[r] & \mathcal{O}_D(K_{\mathcal{F}_D}) \arrow[u]
\end{tikzcd}
\]
In particular there is a non-trivial map $\mathcal{O}_D(K_{\mathcal{F}_D})\to  \mathcal{O}_D(K_{\mathcal{F}})$. Thus, there exists an effective divisor $\Diff_{D}(\mathcal F)\ge 0$ on $D$ such that
$$K_{\mathcal F}\vert_D \sim K_{\mathcal F_D}+\mathrm{Diff}_D(\mathcal F)$$
and our claim follows.

\medskip 

Now suppose that $D$ is not $\mathcal{F}$-invariant. 
By Lemma \ref{l_invariant} the composition
\[
\mathcal{O}_{D}(-D) \;\longrightarrow\; \Omega^1_X|_{D} \;\longrightarrow\; T^*_{\mathcal{F}}|_{D}
\]
is non-zero. Let $\mathcal E$ be its cokernel and let $T_{\mathcal F_D} := \mathcal E^*$. Then, $\rk T_{\mathcal F_D} =\rk \mathcal F-1$ and 
the induced inclusion
$T_{\mathcal F_D} \subset T_D$
satisfies the integrability condition.

Let $\phi\colon \Omega^{[r]}_X\to \mathcal O_X(K_{\mathcal F})$  be the Pfaffian map. 
We define a morphism
\[
\psi \colon \Omega_D^{r-1} \;\longrightarrow\;
\mathcal{O}_D(K_{\mathcal{F}}  + D)
\]
by
\[
\psi(\alpha) := \left.\frac{\phi(df \wedge \tilde\alpha)}{f}\right|_D
\]
for any local section $\alpha$ of $\Omega^r_D{}^{-1}$, where $f$ is a local equation of $D$, 
$\tilde\alpha$ is any local lift of $\alpha$ to $\Omega^r_X{}^{-1}$, and 
$\frac{\phi(df \wedge \tilde\alpha)}{f}$ is considered as a section of 
$\mathcal{O}_X(K_{\mathcal{F}} + D)$.

We claim that this morphism is independent of our choice of $f$. Indeed, if $f'$ is another local equation of $D$ then 
$f' = u f$ where $u$ is a unit. We compute that
\[
\frac{\phi(df' \wedge \tilde\alpha)}{f'}
= \frac{\phi(df \wedge \tilde\alpha)}{f} + \phi(\omega_0),
\]
where $\omega_0$ is a local section of $\Omega^r_X$. Observe that $\phi(\omega_0)$ is a section of 
$\mathcal{O}_X(K_{\mathcal{F}})$, and so it vanishes along $D$ when considered as a section of 
$\mathcal{O}_X(K_{\mathcal{F}}  + D)$. Thus,
\[
\left.\frac{\phi(df' \wedge \tilde\alpha)}{f'}\right|_D
= \left.\frac{\phi(df \wedge \tilde\alpha)}{f}\right|_D,
\]
as required. Likewise, it is easy to check that $\psi$ is independent of the choice of the lift $\tilde\alpha$.

By construction, $\psi$ factors through 
the Pfaffian map $\Omega^{r-1}_D\to \mathcal O_D(K_{\mathcal F_D})$. Thus, there exists a non-zero morphism $ \mathcal O_D(K_{\mathcal F_D}) \to \mathcal O_D(K_{\mathcal F}+D)$, which implies that there exists 
an effective divisor $\Diff_{W}(\mathcal F)\ge 0$ such that
$$(K_{\mathcal F}+D)\vert_D \sim K_{\mathcal F_D}+\mathrm{Diff}_D(\mathcal F)$$
and our claim follows again. 
\end{proof}

 The adjunction formula for varieties says a bit more: it gives a way to relate the singularities of the pair $(X, D)$ to the singularities of the pair $(D, \Diff_D(X))$.  In general, we do not have such control for a foliation when $D$ is $\mathcal F$-invariant, but see Theorem \ref{thm_adj_sing} below.  The following examples show that even if $\mathcal F$ is canonical, it is not true that $(\mathcal F_D, \mathrm{Diff}_D(\mathcal F))$ is log canonical:
\begin{example}
\begin{enumerate}
\item  Let $X=\mathbb C^2$, fix a positive integer $m\ge 2$ and let $\mathcal F$ be the rank one foliation defined by the derivation 
$$\partial := x\partial_x + y^m \partial_y.$$
Then $D:=\{x=0\}$ is $\mathcal F$-invariant and $T_{\mathcal F_D}=T_D$. In this case, $\Diff_D(\mathcal F)=m\cdot p$ where $p=(0,0)$. Thus, $(\mathcal F_D,\Diff_D(\mathcal F))$ is not log canonical. 

\item Let $X=\mathbb C^3$, fix a positive integer $m\ge 2$ and let $\mathcal F$ be the rank one foliation defined by the derivation 
$$\partial := x\partial_x + y^m \partial_y+z^m\partial_z.$$
Then $D:=\{x=0\}$ is $\mathcal F$-invariant and ${\mathcal F_D}$ is the foliation defined by the vector field
$$ y^m \partial_y+z^m\partial_z.$$
In this case, $\Diff_D(\mathcal F)=0$ but $\mathcal F_D$ is not log canonical. 

\end{enumerate}

\end{example}

\begin{theorem}
\label{thm_adj_sing}
Let $X$ be a normal variety, 
let $\mathcal F$ be a foliation of rank $r$  on $X$ and let $D$ be a prime divisor which is not 
$\mathcal F$-invariant.
Suppose that $K_{\mathcal F}$ and $D$ are $\mathbb Q$-Cartier.
Suppose that $(\mathcal F, D)$
is log canonical.  Let $\nu \colon W \rightarrow D$ be the normalisation.

Then $(\mathcal F_W, \Diff_D(\mathcal F))$
is log canonical.
\end{theorem}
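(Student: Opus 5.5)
The approach mirrors the classical proof of inversion of adjunction in the easy direction: lc of $(\mathcal F, D)$ implies lc of the restriction. The route is through a resolution and a comparison of discrepancies of divisors over $W$ with divisors over $X$.

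First reduce to a local statement. The question is local on $X$, so after shrinking we may assume $mK_{\mathcal F}\sim 0$ and $mD\sim 0$. We then take a quasi-\'etale index one cover $\sigma\colon X'\to X$ making $K_{\mathcal F}$ and $D$ Cartier. Log canonicity is preserved under quasi-\'etale covers (the references in Remark \ref{rem_1}). By Proposition \ref{p_RH}, the pullback foliation satisfies $K_{\mathcal F'}=\sigma^*K_{\mathcal F}$, and the preimage $D'$ of $D$ is still non-invariant. The induced finite map $W'\to W$ of normalisations is compatible with the different; this is checked by a Riemann--Hurwitz computation on $W$, again Proposition \ref{p_RH}, applied to $\mathcal F_W$. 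Log canonicity of $(\mathcal F_W,\Diff_D(\mathcal F))$ then descends from $W'$, because discrepancies under a finite cover transform in the controlled way given by Proposition \ref{p_RH}(2). So we may assume $K_{\mathcal F}$ and $D$ are Cartier.

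Next, take a divisor $E_W$ over $W$, realised on some birational model $W_1\to W$. The key step is to produce a birational morphism $\pi\colon Y\to X$ such that:
\begin{enumerate}
\item the strict transform $D_Y$ of $D$ is normal near the generic point of a divisor $E_W\subset D_Y$;
\item $D_Y\to W$ factors through $W_1$ near that point;
\item $E_W$ is a component of $D_Y\cap E$ for some $\pi$-exceptional $E$.
\end{enumerate}
A standard way to get this is to blow up the image of $E_W$ in $X$ repeatedly (the Zariski/Abhyankar argument), which extracts a divisor $E$ whose intersection with the strict transform of $D$ gives the valuation of $E_W$. Write
\[
K_{\mathcal F_Y}+D_Y=\pi^*(K_{\mathcal F}+D)+\sum a_iE_i .
\]
By hypothesis $a_i\ge -\epsilon(E_i)$. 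Applying Theorem \ref{t_adjunction} on $Y$ to $D_Y$, we have
\[
(K_{\mathcal F_Y}+D_Y)|_{D_Y}=K_{\mathcal F_{D_Y}}+\Diff_{D_Y}(\mathcal F_Y).
\]
Since $\mathcal F_{D_Y}$ is the pullback of $\mathcal F_W$ (both are defined by intersecting leaves with $D$, which is birationally invariant), restricting gives
\[
a(E_W,\mathcal F_W,\Diff_D(\mathcal F)) = \sum_i a_i\,\mathrm{mult}_{E_W}(E_i|_{D_Y}) - \mathrm{mult}_{E_W}\Diff_{D_Y}(\mathcal F_Y).
\]

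The main obstacle is then a local comparison at the generic point of $E_W$. We must show that this quantity is at least $-\epsilon_{\mathcal F_W}(E_W)$, which requires relating invariance of $E_W$ for $\mathcal F_W$ to invariance of the $E_i$ and controlling the different. I would handle this using Lemma \ref{lem_ev_trans}. Near a general point of $E_W$, choose the model so that $E_W=D_Y\cap E$, with $E$ the unique exceptional divisor through it. Work formally at a general point and split into two cases.

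\emph{Case $E$ non-invariant.} We need $a\ge -1$. Here $(\mathcal F_Y, D_Y+E)$ should be compared with a pair where both divisors are transverse, and the different along $E_W$ comes out to be at most $-a-1$.

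\emph{Case $E$ invariant.} Then $a\ge 0$, and $E_W$ is $\mathcal F_{D_Y}$-invariant. We need $\mathrm{mult}_{E_W}\Diff\le a$. This is proved via the Pfaffian computation in the proof of Theorem \ref{t_adjunction}: the map $\psi(\alpha)=\phi(df\wedge\tilde\alpha)/f$ vanishes along $E_W$ to order at most that of $\phi$ along $E$ corrected by $a$.

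Since everything is codimension two in $Y$, we may assume that $Y$, $\mathcal F_Y$ and $D_Y$ are as nice as possible off codimension three, so both cases reduce to explicit local forms. These local forms are where I expect the real work to lie.
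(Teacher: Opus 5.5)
Your outer frame (the index-one cover, realising $E_W$ on a model $Y$, and the identity
\[
a(E_W,\mathcal F_W,\Diff_D(\mathcal F))=\sum_i a_i\,\mult_{E_W}(E_i|_{D_Y})-\mult_{E_W}\Diff_{D_Y}(\mathcal F_Y)
\]
) is reasonable. The gap is that it moves the entire content of the theorem into the step you leave open, namely the upper bound
\[
\mult_{E_W}\Diff_{D_Y}(\mathcal F_Y)\le \sum_i a_i\,\mult_{E_W}(E_i|_{D_Y})+\epsilon_{\mathcal F_{D_Y}}(E_W).
\]
This says that the sub-pair $(\mathcal F_Y,D_Y-\sum_i a_iE_i)$, which is crepant to $(\mathcal F,D)$, has different with coefficient at most $\epsilon$ at the codimension-two point $E_W$. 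That is the codimension-two case of the statement itself. You also need it, and do not address it, for non-exceptional prime divisors $P\subset W$, i.e.\ $\mult_P\Diff_D(\mathcal F)\le\epsilon(P)$.

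This bound cannot come from $a_i\ge-\epsilon(E_i)$ for the divisors through $E_W$, nor from a Pfaffian computation on $Y$. The quantity $\mult_{E_W}\Diff_{D_Y}(\mathcal F_Y)$ is intrinsic to $(Y,\mathcal F_Y,D_Y)$ near $E_W$. It is linked to the $a_i$ only through discrepancies of divisors obtained by blowing up further over $E_W$.

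For example, take $\partial=x\partial_x+(y+x^2)\partial_y$ on $\mathbb C^2$, with $E=\{x=0\}$ invariant and $D_Y=\{y=0\}$ not invariant. The different at the origin is $2$. Log canonicity of $(\mathcal F_Y,D_Y-aE)$ at $E$ gives only $a\ge 0$, and the first blow-up gives only $a\ge 1$. The needed $a\ge 2$ is forced by the invariant exceptional divisor of the second blow-up. So your invariant case simply asserts the inequality to be proved.

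Your non-invariant case is misstated:
\begin{enumerate}
\item $a\ge -1$ is the hypothesis, not the goal.
\item The bound you need is $a\,m+\epsilon_{\mathcal F_{D_Y}}(E_W)$ with $m=\mult_{E_W}(E|_{D_Y})$, not $-a-1$, which is negative once $a\ge 0$.
\item $E_W$ can be $\mathcal F_{D_Y}$-invariant even though $E$ and $D_Y$ are not (always, if $\mathcal F$ has rank one). Then $a=-1$ makes the bound impossible, so the configuration must be ruled out. For instance, $\partial_u+\partial_v$ with $D_Y=\{v=0\}$, $E=\{u=0\}$, $a=-1$ is excluded only because blowing up the origin gives an invariant divisor of discrepancy $-1$.
\end{enumerate}

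Nor can you fall back on ``explicit local forms'' by choosing $Y$ well. Resolution of singularities makes $Y$ and $D_Y+\sum_i E_i$ snc at the generic point of $E_W$, but nothing makes $\mathcal F_Y$ nice there. Blow-ups create foliation singularities (Example \ref{ex_blow_up}), and no reduction of singularities is available in general. So $E_W$ may lie in $\sing\mathcal F_Y$, and $D_Y$ may be tangent to $\mathcal F_Y$ along $E_W$; in the first example above both happen.

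What is missing is an actual proof of the codimension-two estimate for sub-log canonical pairs. One possible route is to repeatedly blow up the intersection of the strict transform of $D_Y$ with the newest exceptional divisor over $E_W$. Compatibility of the different with crepant pullback keeps the total coefficient at $E_W$ fixed, and you would need to show that, once the tangency of $D$ with the foliation is resolved, any excess over $\epsilon$ appears as a divisor of discrepancy $<-\epsilon$. This is what happens in both examples above. Another route would be a local structure result for $\mathcal F$ at generic points of codimension-two loci.

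This estimate is the substance of the result. The paper does not reprove it; it cites \cite[Theorem 3.16]{CS23b}.
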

\begin{proof}
    See \cite[Theorem 3.16]{CS23b}.
\end{proof}

\begin{proposition}\label{p_diff}
Let $X$ be a normal variety, let $\mathcal{F}$ be a foliation of rank $r$ on $X$, let $D$ be a prime divisor on $X$. Suppose that $K_{\mathcal{F}}$ and $\epsilon(D)D$ are $\mathbb{Q}$-Cartier. Let $n \colon S \to D$ be the normalisation, let $\mathcal{F}_S$ be the restricted foliation on $S$ and let $P$ be a prime divisor on $S$.  

If $n(P)$ is contained in $\operatorname{Sing}\mathcal{F}$,  then $\mult_P \operatorname{Diff}_S(\mathcal{F}) > 0$.
 \end{proposition}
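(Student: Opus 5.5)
The claim is local around the generic point of $n(P)$, which has codimension two in $X$. So I would first shrink $X$ to a neighbourhood of the generic point $\eta$ of $n(P)$. Next I would pass to an index one (quasi-\'etale) cover $\sigma\colon X'\to X$ on which $K_{\mathcal F}$ and $\epsilon(D)D$ become Cartier. The Riemann--Hurwitz formula (Proposition~\ref{p_RH}) shows that $K_{\sigma^{-1}\mathcal F}=\sigma^*K_{\mathcal F}$. Pulling back adjunction along the induced finite map of normalisations $S'\to S$, the different transforms by $\mathrm{Diff}_{S'}=\tau^*\mathrm{Diff}_S+(\text{ramification correction})$; I expect this correction to vanish or to be controlled by the invariance of the branch divisors, as in Proposition~\ref{p_RH}. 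Hence positivity of the multiplicity over $P$ can be checked upstairs. The point to verify is that $\sigma^{-1}(n(P))$ still lies in $\sing(\sigma^{-1}\mathcal F)$. This holds because the twisted Pfaff field pulls back to the twisted Pfaff field when $K$ pulls back to $K$, so the cosupport of its image is preserved. After this reduction I may assume $K_{\mathcal F}$ and $\epsilon(D)D$ are Cartier. I may also assume $X$ is as nice as it is at the generic point of a codimension two subset, though $X$ itself may still be singular there.

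I would then work through the explicit construction in the proof of Theorem~\ref{t_adjunction}. There, $\mathrm{Diff}$ is the divisor of zeros of a morphism $\mathcal O(K_{\mathcal F_S})\to \mathcal O(K_{\mathcal F}+\epsilon(D)D)|_S$. This morphism arises from factoring a map $\psi\colon\Omega^{r-\epsilon(D)}_S\to\mathcal O_S(K_{\mathcal F}+\epsilon(D)D)$ through the Pfaffian of $\mathcal F_S$. In the invariant case $\psi$ is $\phi|_D$ descended to $\Omega^r_D$; in the non-invariant case $\psi(\alpha)=\phi(df\wedge\tilde\alpha)/f|_D$. The key observation is that $\mathrm{mult}_P\mathrm{Diff}_S(\mathcal F)=0$ would force this morphism to be an isomorphism at the generic point of $P$. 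Then the image of $\psi$ would equal the image of the Pfaffian of $\mathcal F_S$ there. Since that Pfaffian is generically surjective off a codimension two set of $S$, and $P$ is a divisor on $S$, the image of $\psi$ would generate $\mathcal O_S(K_{\mathcal F}+\epsilon(D)D)$ at the generic point of $P$.

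I would then lift this surjectivity back to $X$. In the invariant case, the image of $\psi$ is the restriction to $D$ of the image of $\phi$. So surjectivity of $\psi$ at the generic point of $P$ means that $\phi$, twisted by $-K_{\mathcal F}$, hits a unit at $\eta$ by Nakayama. Thus $\eta\notin\sing\mathcal F$, a contradiction. In the non-invariant case we get some $\tilde\alpha$ with $\phi(df\wedge\tilde\alpha)/f$ generating $\mathcal O(K_{\mathcal F}+D)$ at $\eta$. Then $\phi(df\wedge\tilde\alpha)$ equals $f$ times a generator of $\mathcal O(K_{\mathcal F}+D)$, which is a generator of $\mathcal O(K_{\mathcal F})$, since $f$ cuts out $D$ and $D$ is Cartier. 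So again the twisted Pfaff field is surjective at $\eta$, which contradicts $n(P)\subset\sing\mathcal F$.

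The main obstacle is the non-normal case. When $D$ is not normal along $n(P)$, the normalisation $S\to D$ is not an isomorphism over $\eta$, and restriction through $n$ loses information. I would handle this by showing that the different acquires positive multiplicity automatically along the conductor. I would try first to check this via the standard conductor computation for $\Omega^{r}_S$ versus $\nu^*\Omega^r_X$, i.e. that forms pulled back from $X$ vanish along the conductor relative to $\Omega_S$, mirroring the classical different. For the cover reduction, if I cannot make it work cleanly, the fallback is to argue directly with the $\mathbb Q$-Cartier reflexive version of $\phi'$ from \cite{CS25}.
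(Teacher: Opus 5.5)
For normal $D$, your argument is the paper's argument run contrapositively. The paper observes that $\phi$ takes values in $I\cdot\mathcal O_X(K_{\mathcal F})$ with $I\subset\mathfrak m_\eta$, so $\psi$ lands in $\mathcal O_D(K_{\mathcal F}+\epsilon(D)D-P)$. You instead assume $\mult_P\Diff_S(\mathcal F)=0$. You use that the Pfaffian of $\mathcal F_S$ is surjective at the generic point of $P$, which is correct since $S$ is smooth there and $\mathcal N_{\mathcal F_S}$ is torsion free. You then push surjectivity back to $\phi$ at $\eta$. That is fine, and the reduction to the Cartier case by a quasi-\'etale cover is also the paper's.

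Two points in that reduction need tightening.
\begin{enumerate}
\item The ramification correction does not vanish in general. The cover is branched in codimension two, typically along $n(P)$ itself, so $\tau\colon S'\to S$ can ramify along $P'$. For example, take $X=\mathbb C^2/\{\pm1\}$, $\mathcal F$ induced by $\partial_a$, $D$ the image of $\{b=0\}$ and $P$ the vertex. Then $\Diff_{S'}=0$ upstairs, while $\mult_P\Diff_S(\mathcal F)=\tfrac12$. What you need is Proposition~\ref{p_RH} applied to $\tau$, which gives $\mult_{P'}\Diff_{S'}(\mathcal F')=e\,\mult_P\Diff_S(\mathcal F)-\epsilon(P)(e-1)$, with $\epsilon$ computed for $\mathcal F_S$. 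The correction is non-positive, and that sign is exactly what lets positivity descend from $S'$ to $S$.
\item To get $\sigma^{-1}(n(P))\subset\sing(\sigma^{-1}\mathcal F)$ you need ``smooth upstairs implies smooth downstairs''. Pulling back the twisted Pfaff field only gives the converse, because the reflexive hull upstairs has more sections than the pullback. Take invariants under the Galois group, or cite \cite[Proposition 5.13]{Druel21} as the paper does.
\end{enumerate}

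The genuine gap is the non-normal case. The paper's sketch also assumes $D$ normal and defers the general case to \cite{CS25}, but the route you propose fails for invariant $D$: positivity of the different along the conductor is not automatic there. Take $X=\mathbb C^3$, $\mathcal F$ generated by $\partial_z$, and $D=\{y^2=x^2(x+1)\}$. Then $S=\mathbb A^2_{t,z}$ via $x=t^2-1$, $y=t(t^2-1)$, and $\mathcal F_S$ is generated by $\partial_z$. The map $n^*T_{\mathcal F}\to T_{\mathcal F_S}$ is an isomorphism, so $\Diff_S(\mathcal F)=0$ along the conductor $\{t=\pm1\}$. This is no contradiction, since $\mathcal F$ is smooth along the $z$-axis.

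So for invariant $D$ you must genuinely use the hypothesis $n(P)\subset\sing\mathcal F$. Your contrapositive does not transfer verbatim either. The map $n^*\Omega^r_X\to\Omega^r_S$ need not be surjective along the conductor: for a cusp $t\mapsto(t^2,t^3)$ its image at $t=0$ is $t\,\Omega^1_S$. Hence a form on $S$ realising surjectivity need not come from $X$.

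Nor is the mechanism you propose the classical one. At a node, the pulled-back forms do not vanish: for the nodal cubic, $n^*dx=2t\,dt$ is nonzero at $t=\pm1$. Positivity of the classical different comes from residues, namely $n^*\omega_D\cong\omega_S(C)$ with $C$ the conductor. That is only relevant when $D$ is not invariant.
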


\begin{proof}[Sketch of the Proof] 
For simplicity, we assume that $D$ is normal and so $S=D$. 
We refer to \cite{CS25} for the more general proof. 

Let us first suppose that $K_{\mathcal{F}}$ and $\epsilon(D)D$ are Cartier in a neighbourhood of $P$. By assumption, the morphism 
\[
\phi \colon \Omega^r_X \to \mathcal{O}_X(K_{\mathcal{F}})
\]
takes values in $I \mathcal{O}_X(K_{\mathcal{F}})$ where $I$ is an ideal sheaf, whose co-support contains $P$.

If $\epsilon(D) = 0$ then  the restriction of $\phi$ to $D$ factors through
\[
\Omega^r_D \to \mathcal{O}_D(K_{\mathcal{F}} - P).
\]

If $\epsilon(D) = 1$ then, as in the proof of Theorem \ref{t_adjunction}, we may define 
\[
\psi \colon \Omega^{r-1}_X \to \mathcal{O}_X(K_{\mathcal{F}} + D)
\quad \text{by} \quad
\psi:= \frac{\phi(df \wedge \cdot)}{f}
\]
for some choice of a local parameter $f$ for $D$. Then $\psi$ still factors through $I \mathcal{O}_X(K_{\mathcal{F}} + D)$ and so  the restriction to $D$ factors through
\[
\Omega^{r-1}_D \to \mathcal{O}_D\bigl(K_{\mathcal{F}} + D - P\bigr).
\]

In both cases, we have that $\mult_P \operatorname{Diff}_D(\mathcal{F}) \geq 1$, as required.

\medskip

We now consider the general case. Let $q \colon V \to U$ be a quasi-\'etale cyclic cover, where $U$ is a neighbourhood of the generic point of $P$ in $X$, and such that $q^*K_{\mathcal{F}}$ and $q^*(\epsilon(D)D)$ are Cartier divisors. Let $\mathcal{F}' = \mathcal{F}_V$. 
By \cite[Proposition 5.13]{Druel21}, we have that 
$P$ is contained in $\operatorname{Sing}\mathcal{F}$ if and only if $P' := q^{-1}(P)$ is contained in $\operatorname{Sing}\mathcal{F}'$. Let $S'$ denote the normalisation of $q^{-1}(D)$ and let $\mathcal{F}_{S'}$ and $\operatorname{Diff}_{S'}(\mathcal{F}')$ be respectively the restricted foliation and the different associated to $\mathcal{F}'$ on $S'$. We have a finite morphism $p \colon S' \to S$ and we have that
\[
K_{\mathcal{F}_{S'}} + \operatorname{Diff}_{S'}(\mathcal{F}') = p^*(K_{\mathcal{F}_S} + \operatorname{Diff}_S(\mathcal{F})).
\]

Let $e$ be the ramification index of $p$ along $P'$. By Riemann-Hurwitz (cf. Proposition \ref{p_RH})
we get an equality
\[
\mult_P \operatorname{Diff}_{S'}(\mathcal{F}') = e\, \mult_P \operatorname{Diff}_S(\mathcal{F}) - \epsilon(\mathcal{F}_S, P)(e-1).
\]

Since $\mult_P \operatorname{Diff}_{S'}(\mathcal{F}') \geq 1$ this implies that $\mult_P \operatorname{Diff}_S(\mathcal{F}) > 0$.
\end{proof}

\section{MMP for surface foliations}
\label{s_mmp2}
In ~\cite{McQuillan08}, McQuillan developed  a minimal model program for foliations on surfaces. In this section we review this result, and explain how it follows as a simple consequence of our results on adjunction and the classical MMP for surfaces.

\subsection{MMP with less}

\begin{proposition}\label{p_discr}
Let $X$ be a normal $\mathbb Q$-factorial surface and let $\mathcal F$ be a rank one foliation on $X$. Assume that $C_1,\dots,C_n$ are $\mathcal F$-invariant curves on $X$ and let $\eta\colon Y\to X$ be a birational morphism. For any $\eta$-exceptional divisor $E$, denote by $b_E:=a(E,X,\sum_{i=1}^n C_i)$  the log discrepancy of $(X,\sum_{i=1}^n C_i)$ along $E$.

Then, 
\begin{enumerate}
\item If $\mathcal F$ is non-dicritical, we have that 
$b_E\ge a(E,\mathcal F)$ for any  $\eta$-exceptional divisor $E$.

\item If $\mathcal F$ admits canonical singularities then $(X,\sum_{i=1}^n C_i)$ is log canonical. 
\end{enumerate}
\end{proposition}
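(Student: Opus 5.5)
The plan is to compare the two pullback formulas on a suitable resolution and conclude with the negativity lemma. The input that makes the comparison work is adjunction along the exceptional curves, which are invariant by non-dicriticality.

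Since discrepancies only depend on valuations, we may replace $\eta$ by a higher model. So we assume $Y$ is smooth and that
\[
\Gamma:=\sum_i C_i'+\sum_j E_j
\]
is a simple normal crossing divisor. Here $C_i'$ is the strict transform of $C_i$ and the $E_j$ are all the $\eta$-exceptional curves.

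Write $\mathcal G=\eta^{-1}\mathcal F$, $a_E=a(E,\mathcal F)$ and $b_E$ for the log discrepancy of $(X,\sum C_i)$. Then
\[
\eta^*\Big(K_X+\sum C_i\Big)=K_Y+\sum C_i'+\sum (1-b_E)E, \qquad \eta^*K_{\mathcal F}=K_{\mathcal G}-\sum a_E E.
\]
Subtracting, and setting $N:=K_Y+\Gamma-K_{\mathcal G}$, we obtain
\[
\eta^*\Big(K_X+\sum C_i-K_{\mathcal F}\Big)=N+\sum (a_E-b_E)E .
\]
This uses that $X$ is $\mathbb Q$-factorial, so that $K_X+\sum C_i-K_{\mathcal F}$ is $\mathbb Q$-Cartier. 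In particular $B:=\sum(a_E-b_E)E\equiv_\eta -N$.

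The main step is to show $N\cdot E\le 0$ for every exceptional $E$. By non-dicriticality every exceptional $E$ is $\mathcal G$-invariant. It is smooth, and the restricted foliation is $T_E$, so Theorem~\ref{t_adjunction} gives
\[
K_{\mathcal G}\cdot E=\deg K_E+\deg\Diff_E(\mathcal G)=-2+\deg \Diff_E(\mathcal G).
\]
On the other hand, $(K_Y+\Gamma)\cdot E=-2+(\Gamma-E)\cdot E$, which counts the points where the other components of $\Gamma$ meet $E$, each transversally.

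At such a point two distinct $\mathcal G$-invariant smooth curves meet transversally. All the $C_i'$ are invariant (the $C_i$ are), and the $E_j$ are invariant by non-dicriticality. A non-singular rank one foliation has a unique invariant curve germ through each point, so such a point lies in $\sing\mathcal G$. Hence Proposition~\ref{p_diff} gives that $\Diff_E(\mathcal G)$ has positive, hence (as $Y$ is smooth and $K_{\mathcal G}$ Cartier) integral and at least $1$, multiplicity at each such point. Therefore $\deg\Diff_E(\mathcal G)\ge (\Gamma-E)\cdot E$, that is, $N\cdot E\le 0$.

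Consequently $B\cdot E=-N\cdot E\ge 0$ for all exceptional $E$, so $B$ is $\eta$-exceptional and $\eta$-nef. The negativity lemma then gives $-B\ge 0$, i.e. $b_E\ge a_E$ for every $E$, which proves (1).

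For (2), Theorem~\ref{t_nd} shows that canonical singularities are non-dicritical, so (1) applies. It gives $b_E\ge a(E,\mathcal F)\ge 0$ for every exceptional $E$ over $X$. Since the boundary $\sum C_i$ is reduced, all its coefficients are at most $1$, and so $(X,\sum C_i)$ is log canonical.

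The step I expect to be most delicate is the local claim that intersection points of invariant curves force $\mult\Diff\ge 1$ exactly once per point. This needs the snc arrangement and the integrality of $\Diff$ on a smooth model; if the latter is problematic, I would instead compute directly in local coordinates, where $\partial=x\,\partial_x+\cdots$-type vector fields vanish at the crossing.
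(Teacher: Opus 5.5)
Your proposal is correct and is essentially the paper's own proof: on a log resolution you use adjunction along each exceptional curve, which is invariant by non-dicriticality, together with Proposition~\ref{p_diff} at the points where other components of $\Gamma$ meet it, to get $(K_Y+\Gamma)\cdot E\le K_{\mathcal G}\cdot E$. You then conclude (1) by the negativity lemma and (2) from Theorem~\ref{t_nd}, exactly as the paper does. One small slip: $\deg K_E$ should be $2g(E)-2$ rather than $-2$, because exceptional curves of a resolution of a normal surface singularity need not be rational; this term appears on both sides and cancels in $N\cdot E$, so the argument is unaffected.
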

\begin{proof}
After possibly replacing $\eta$ by a higher model, we may assume that $\eta$ is a log resolution of $(X, \sum C_i)$.  Write 
$$K_{\mathcal F_Y}=\eta^*K_{\mathcal F}+\sum_{E \subset \exc \eta} a(E,\mathcal F) E.$$
We also have 
$$K_{Y}+\sum_{i=1}^n C'_i +\sum_{E \subset \exc \eta} E= \eta^*(K_X+\sum_{i=1}^n C_i) + \sum_{E \subset \exc \eta} b_E E,$$
where $C'_i$ is the strict transform of $C_i$ on $Y$ for $i=1,\dots,n$.
Fix any prime exceptional divisor $E_0$. 
Since $\mathcal F$ is non-dicritical, each $\eta$-exceptional divisor $E$ of $\eta$ is $\mathcal F_Y$-invariant. Thus, if $E\neq E_0$ then $E\cap E_0$ is contained in $\sing \mathcal F_{Y}$ and, in particular, by Proposition \ref{p_diff}, we have that $E\cap E_0$ is contained in the support of $\Diff_{E_0}(\mathcal F_Y)$. 
Similarly $C'_i\cap E_0$ is contained in the support of $\Diff_{E_0}(\mathcal F_Y)$ for each $i=1,\dots,n$. 
Thus, 
restricting to $E_0$ we get
$$
\begin{aligned}
\sum_{E \subset \exc \eta} b_E E\cdot E_0 &= \deg (K_Y+\sum_{i = 1}^n C'_i + \sum_{E \subset \exc \eta} E)|_{E_0}  \\
& = \deg (K_{E_0} + \bigl(\sum_{i = 1}^n C'_i + \sum_{E \subset \exc \eta, ~ E\neq E_0} E\bigr)|_{E_0}) \\
& \le \deg (K_{E_0} + \Diff_{E_0}(\mathcal F_Y)) \\
&=  K_{\mathcal F_Y}\cdot E_0 \\
&=\sum_{E \subset \exc \eta} a(E, \mathcal F) E \cdot E_0.
\end{aligned}
$$
Thus, \cite[Lemma 3.41]{KM98} implies (1) and Theorem \ref{t_nd} implies (2).
\end{proof}

\begin{proposition}
\label{p_contraction}
    Let $X$ be a normal $\mathbb Q$-factorial surface and let $\mathcal F$ be a non-dicritical foliation of rank one on $X$. Assume that $C_1,\dots,C_n$ are $\mathcal F$-invariant curves on $X$
    and that $C_1$ is projective.

    Then, \[ (K_X+\sum_{i=1}^n C_i )\cdot C_1 \leq K_{\mathcal F}\cdot C_1.\]

    In particular, if $\mathcal F$ has canonical singularities, $K_{\mathcal F}\cdot C_1<0$ and $C_1^2<0$ then there exists a contraction $\phi\colon X \to Z$ of $C_1$ to a point.
\end{proposition}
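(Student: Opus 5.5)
We deduce the inequality from foliated adjunction along the invariant curve $C_1$ and compare it with classical adjunction. Let $\nu\colon W\to C_1$ be the normalisation.

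\textbf{Step 1: adjunction along $C_1$.} Since $C_1$ is $\mathcal F$-invariant, $\epsilon(C_1)=0$ and $\mathcal F$ has rank one. Theorem~\ref{t_adjunction} then gives a rank one foliation $\mathcal F_W$ on the curve $W$, and
\[
K_{\mathcal F}\cdot C_1=\deg\bigl(K_{\mathcal F_W}+\Diff_{C_1}(\mathcal F)\bigr)=\deg\bigl(K_W+\Diff_{C_1}(\mathcal F)\bigr),
\]
using $T_{\mathcal F_W}=T_W$ on a curve.

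\textbf{Step 2: classical adjunction.} On the surface side, $X$ is $\mathbb Q$-factorial, so
\[
\Bigl(K_X+C_1+\sum_{i\ge2}C_i\Bigr)\cdot C_1=\deg\Bigl(K_W+\Diff_W(X,0)+\nu^*\sum_{i\ge 2}C_i\Bigr).
\]
It therefore suffices to prove, pointwise on $W$,
\[
\Diff_W(X,0)+\nu^*\sum_{i\ge2}C_i\le \Diff_{C_1}(\mathcal F).
\]

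\textbf{Step 3: local comparison.} This is where the real work lies. I would reduce to the smooth case by taking a minimal resolution, or a log resolution $\eta\colon Y\to X$ of $(X,\sum C_i)$, and running the computation of Proposition~\ref{p_discr} with $E_0$ replaced by the strict transform $C_1'$.

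On $Y$, every $\eta$-exceptional curve is $\mathcal F_Y$-invariant by non-dicriticality, and each $C'_i$ is invariant. Hence every intersection point of $C_1'$ with another invariant curve lies in $\sing\mathcal F_Y$. By Proposition~\ref{p_diff}, such a point appears in $\Diff_{C_1'}(\mathcal F_Y)$ with coefficient at least $1$, matching the coefficient-one contributions of classical adjunction on a log resolution, so
\[
\Bigl(K_Y+C'_1+\sum_{i\ge2}C'_i+\sum_E E\Bigr)\cdot C'_1\le K_{\mathcal F_Y}\cdot C'_1 .
\]
Now write both sides as pullbacks plus exceptional corrections:
\[
K_{\mathcal F_Y}=\eta^*K_{\mathcal F}+\sum a(E,\mathcal F)E,\qquad
K_Y+\ldots=\eta^*\Bigl(K_X+\sum C_i\Bigr)+\sum b_E E .
\]
Proposition~\ref{p_discr}(1) gives $b_E\ge a(E,\mathcal F)$, and $E\cdot C_1'\ge0$. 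Subtracting therefore yields
\[
\Bigl(K_X+\sum C_i\Bigr)\cdot C_1\le K_{\mathcal F}\cdot C_1 .
\]
The main obstacle is checking that the correction terms go in the right direction, which is exactly why Proposition~\ref{p_discr}(1) is needed rather than a naive comparison.

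\textbf{Step 4: the contraction.} Suppose $\mathcal F$ is canonical. By Theorem~\ref{t_nd} it is non-dicritical, and by Proposition~\ref{p_discr}(2) the pair $(X,C_1)$ is log canonical. The inequality with $n=1$ gives $(K_X+C_1)\cdot C_1<0$, and $C_1^2<0$ by hypothesis.

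Choose $0<t<1$ close to $1$. Then $(X,tC_1)$ is numerically klt-type near $C_1$ (log canonical with reduced boundary scaled down), and $(K_X+tC_1)\cdot C_1<0$. We then invoke the contraction of a negative extremal curve for log canonical surface pairs, for instance Fujino's MMP for lc surfaces, or Artin's criterion after showing rationality. This gives $\phi\colon X\to Z$ contracting $C_1$ to a point, possibly only as an algebraic space if $X$ is not projective; I would cite the lc surface contraction theorem here.
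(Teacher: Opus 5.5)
Your proposal is correct and follows the paper's proof. Step 3 is the paper's argument:
\begin{itemize}
\item take a log resolution, on which every exceptional curve is invariant by non-dicriticality;
\item use Proposition \ref{p_diff} to get $(K_Y+\sum C_i'+\sum E)\cdot C_1'\le K_{\mathcal F_Y}\cdot C_1'$;
\item conclude using $b_E\ge a(E,\mathcal F)$ from Proposition \ref{p_discr} together with $E\cdot C_1'\ge 0$.
\end{itemize}
Step 4 likewise matches the paper: Theorem \ref{t_nd} and Proposition \ref{p_discr}(2) show that $(X,C_1)$ is log canonical with $(K_X+C_1)\cdot C_1<0$, and then a surface contraction theorem applies.

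Your Steps 1--2, the pointwise comparison of differents on $W$, are not needed, since Step 3 proves the degree inequality directly. Your perturbation to $(X,tC_1)$ is a slightly more careful way of invoking a klt contraction theorem than the paper's direct citation of \cite[Theorem 3.7]{KM98}.
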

\begin{proof}
    Let $\eta\colon Y \to X$ be 
    a log resolution of $(X, \sum C_i)$, let $\mathcal F_Y$ be the foliation induced on $Y$ and let $C_i'$ be the strict transform of $C_i$ on $Y$.
    Since $\mathcal F$ is non-dicritical each $\eta$-exceptional divisor is $\mathcal F_Y$-invariant, hence for any $\eta$-exceptional divisor $E$ we have $E \cap C'_1 \subset \sing \mathcal F_Y$.
    Similarly $C'_i\cap C'_1\subset \sing\mathcal F_Y$ for any $i=2,\dots,n$.
    Thus, Proposition \ref{p_diff} implies
\begin{align}
\label{ineq_1}
    (K_Y+\sum_{i = 1}^n C_i'+\sum_{E \subset \exc \eta} E)\cdot C'_1 &= \deg (K_Y+\sum_{i = 1}^n C_i'+\sum_{E \subset \exc \eta} E)|_{C'_1} \\
  \notag  &= \deg(K_{C'_1}+\sum_{i=2}^nC'_i+\sum_{E \subset \exc \eta} E|_{C'_1} )\\
\notag    &\leq \deg(K_{C'_1}+\Diff_{C'_1}\mathcal F_Y) \\
  \notag   &= K_{\mathcal F_Y}\cdot C'_1.
\end{align}
Let $b_E:=a(E,X,\sum_{i=1}^n C_i)$. 
    Proposition \ref{p_discr}
    implies that 
\begin{align*}
    \big ((K_Y+\sum_{i = 1}^n C'_i+\sum_{E \subset \exc \eta}E) - &\eta^*(K_X+\sum_{i = 1}^n C_i)\big )\cdot C'_1 \\
    &= (\sum_{E \subset \exc \eta} b_EE)\cdot C'_1 \\
    &\ge (\sum_{E \subset \exc \eta} a(E, \mathcal F)E)\cdot C'_1 \\ 
    &= (K_{\mathcal F_Y}- \eta^*K_{\mathcal F})\cdot C'_1.
    \end{align*}
Rearranging gives 
\begin{align}
\label{ineq_2}
((K_Y+\sum_{i = 1}^n C_i'+\sum_{E \subset \exc \eta} E) - K_{\mathcal F_Y})\cdot C'_1
\ge (\eta^*(K_X+\sum_{i = 1}^n C_i)) - \eta^*K_{\mathcal F}\cdot C'_1.
\end{align}
    Hence by (\ref{ineq_1}) and (\ref{ineq_2}), it follows
    \[(K_{X}+\sum_{i = 1}^n C_i)\cdot C_1 = \eta^*(K_X+\sum_{i = 1}^n C_i)\cdot C'_1 \leq \eta^*K_{\mathcal F}\cdot C'_1 =K_{\mathcal F}\cdot C_1,\] as required.
    The existence of the claimed contraction then follows by noting that $(X, C_1)$ is log canonical by Proposition \ref{p_discr} and $(K_X+C_1)\cdot C_1 <0$, hence the contraction exists by \cite[Theorem 3.7]{KM98}.
\end{proof}

\begin{remark}
Note that the $\mathbb Q$-factoriality hypothesis in Proposition \ref{p_discr} and Proposition \ref{p_contraction} is not essential.  On an arbitrary normal surface we can rephrase the above results in the language of Mumford's intersection theory on surfaces and therefore drop the $\mathbb Q$-factorial condition.
\end{remark}

Proposition \ref{p_contraction} now allows us to run the MMP.  Since the general ideas of how to run the MMP have been surveyed before, we will only sketch the argument in broad outline.

\begin{theorem}
\label{thm_mmp_surface}
    Let $X$ be a normal $\mathbb Q$-factorial projective surface and let $\mathcal F$ be a rank one foliation with canonical singularities.
    Then either
    \begin{itemize}
        \item $\mathcal F$ is birational to a fibration in rational curves; or
        \item there exists a birational contraction $\psi\colon X \to X'$ such that $\psi_*K_{\mathcal F}$ is nef.
    \end{itemize}
\end{theorem}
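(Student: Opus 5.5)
We run a $K_{\mathcal F}$-MMP by hand: as long as $K_{\mathcal F}$ is not nef, we find a $K_{\mathcal F}$-negative curve and either contract it, staying within the class of $\mathbb Q$-factorial surfaces with a canonical rank one foliation, or conclude that $\mathcal F$ is birational to a fibration in rational curves. Termination is automatic, since each contraction drops the Picard number by one.

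First we classify $K_{\mathcal F}$-negative curves. Suppose $K_{\mathcal F}$ is not nef and let $C$ be an irreducible curve with $K_{\mathcal F}\cdot C<0$.

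If $C$ is not $\mathcal F$-invariant, adjunction (Theorem \ref{t_adjunction} with $\epsilon(C)=1$) gives $(K_{\mathcal F}+C)\cdot C=\deg \Diff_C(\mathcal F)\ge 0$, since $\mathcal F_W$ has rank zero and $K_{\mathcal F_W}=0$. Hence $C^2>0$, and $C$ moves in a family that is everywhere transverse to the leaves in a numerical sense. In this case I would show that $\mathcal F$ is a fibration in rational curves, following Brunella/McQuillan-style reasoning: a general member of a family of $K_{\mathcal F}$-negative non-invariant curves with $C^2>0$ forces, via Miyaoka's theorem or bend-and-break for foliations, the tangent curves to be rational. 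Concretely, I would first pass to a resolution where $\mathcal F$ has reduced singularities. Canonical implies the discrepancies are unchanged along the invariant exceptional curves, by Theorem \ref{t_nd}, so $K_{\mathcal F}$ stays non-pseudo-effective or non-nef in a compatible way. Then I would invoke the Miyaoka--Bogomolov--McQuillan criterion: if $K_{\mathcal F}\cdot H<0$ for an ample or movable $H$, then $\mathcal F$ is algebraically integrable with rational leaves. Some care is needed when $C^2>0$ but $C$ is not nef-like; I would use Hodge index to produce a movable class on which $K_{\mathcal F}$ is negative.

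If $C$ is invariant and $C^2<0$, Proposition \ref{p_contraction} applied with $C_1=C$ gives a contraction $\phi\colon X\to Z$ of $C$. Here $\mathcal F$ is non-dicritical by Theorem \ref{t_nd}.

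If $C$ is invariant and $C^2\ge0$, then $C$ is nef. Invariant curves through a nonsingular point are leaves, so the family argument again yields a fibration in rational curves. Alternatively, Proposition \ref{p_contraction} gives $(K_X+C)\cdot C\le K_{\mathcal F}\cdot C<0$, so $C$ is a smooth rational curve with $C^2\ge0$ up to the log canonical correction. Deformations of $C$ cover $X$, and they are all invariant because invariance is a closed condition with $C\cdot C\ge 0$. This gives a rational fibration tangent to $\mathcal F$.

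Next we check that the contraction preserves the hypotheses. $Z$ is $\mathbb Q$-factorial because $\phi$ is the log contraction of a single curve on a $\mathbb Q$-factorial surface (\cite[Theorem 3.7]{KM98}). Write $\phi^*K_{\mathcal F_Z}=K_{\mathcal F}+aC$. Intersecting with $C$ gives $0=K_{\mathcal F}\cdot C+aC^2$, so $a>0$. Every discrepancy over $Z$ is therefore at least the corresponding discrepancy over $X$, plus $a\cdot\mathrm{ord}(C)$, and for $C$ itself equals $a>0$. Hence $\mathcal F_Z$ is canonical. Finally, $\rho(Z)=\rho(X)-1$, so the process stops, producing $\psi$ with $\psi_*K_{\mathcal F}$ nef unless a rational fibration appeared.

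The main obstacle is the non-contractible case, namely showing that a $K_{\mathcal F}$-negative curve with $C^2\ge0$ forces a rational fibration. I would reduce this to the known foliated bend-and-break (Miyaoka/Bogomolov--McQuillan) rather than reprove it.
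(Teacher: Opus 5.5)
Your proposal is correct and is essentially the paper's argument with the case split reordered. The paper first disposes of non-pseudo-effective $K_{\mathcal F}$ via \cite{SB92}; after that, any curve with $K_{\mathcal F}\cdot C<0$ must have $C^2<0$ and hence, by your adjunction computation, be invariant. So your ``main obstacle'' disappears once you note that an irreducible curve with $C^2\ge 0$ is nef, so $K_{\mathcal F}\cdot C<0$ already says $K_{\mathcal F}$ is not pseudo-effective, and no Hodge index argument is needed.

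Two slips to fix. From $0=K_{\mathcal F}\cdot C+aC^2$ with $C^2<0$ one gets $a<0$, so the discrepancy of $C$ over $Z$ is $-a>0$: your conclusion is right but the sign is not. Also, your ``alternatively'' argument fails, because deformations of an invariant curve need not be invariant.
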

\begin{proof}[Sketch of the Proof]
    If $K_{\mathcal F}$ is not pseudo-effective, then \cite{SB92} implies that $\mathcal F$ is birational to a fibration in rational curves.  We may therefore assume that $K_{\mathcal F}$ is  pseudo-effective.

    If $K_{\mathcal F}$ is nef, then there is nothing to show.
    So let us assume that $K_{\mathcal F}$ is not nef.  We will explain an iterative process to produce our desired contraction.
    
 Since $K_{\mathcal F}$ is not nef, there is a curve $C \subset X$ such that $K_{\mathcal F}\cdot C<0$. Since $K_{\mathcal F}$ is pseudo-effective we have that $C^2<0$.
    We claim that $C$ is $\mathcal F$-invariant.  Suppose to the contrary that it is not invariant.  Then by adjunction we have
    \[0 \leq \deg (\Diff_C(\mathcal F)) = (K_{\mathcal F}+C)\cdot C = K_{\mathcal F}\cdot C +C^2 <0,\]
    a contradiction.

    Since $C$ is $\mathcal F$-invariant we may apply Proposition \ref{p_contraction} to produce a contraction $\phi\colon X \to Z$ of $C$ to a point.  
    It is easy to show that $\phi_*\mathcal F$ has canonical singularities.  So we may replace $X$ and $\mathcal F$ by $Z$ and $\phi_*\mathcal F$ and continue this process.  Since each iteration drops $\rho(X)$
    by $1$, this process eventually terminates and its output is our desired model.
\end{proof}

\begin{definition}[cf. \cite{brunella00}, Chapt 5]
Given a  rank one foliation $\mathcal{F}$ with canonical singularities on a smooth surface $X$,
we say that a curve $C$ in $X$ is $\mathcal{F}$-{\bf exceptional} if
$C$ is a smooth rational curve of self-intersection $-1$ and
if $\pi\colon X \to \overline X$ is the contraction of $C$ to a point then the foliation $\overline{\mathcal F}$ induced on $\overline X$ has a canonical singularity at $p = \pi(C)$. 
We say that $\mathcal F$ is {\bf relatively minimal} if 
$\mathcal F$ admits only canonical singularities and there are no $\mathcal F$-exceptional curves on $X$. 
\end{definition}

Note that our definition is slightly different from the one in \cite{brunella00}, Chapt 5], as we only require $\mathcal F$ to admit canonical singularities instead of reduced (cf. \cite[paragraph after Definition 8.2]{brunella00}).

\begin{remark} \label{r_suppN}
Set-up as in Theorem \ref{thm_mmp_surface}.
    Suppose that $K_{\mathcal F}$ is pseudo-effective and let $K_{\mathcal F} = P+N$ be its Zariski decomposition.  The morphism $\psi$ given by Theorem \ref{thm_mmp_surface}
    contracts precisely $\supp N$ to a set of points.

    In fact, more can be said about $\supp N$ in the case that $\mathcal F$ is relatively minimal.
    Indeed, if $\mathcal F$ is relatively minimal, then $\supp N$ is a disjoint union of Hirzebruch-Jung chains whose components are $\mathcal F$-invariant curves. In particular, this implies that if $X$ is smooth and $\psi\colon X \to X'$ is contraction given by Theorem \ref{thm_mmp_surface} then $X'$ has only cyclic quotient singularities. We remark that this can also be seen as a direct consequence of Theorem \ref{t_terminal} and the Negativity Lemma. Moreover, if $C$ is an $\mathcal F$-invariant curve which is not contained in the support of $N$ then $C$ meets any connected component of $\supp N$ in at most one point and $C\cdot N\le \frac k2$ where $k$ is the number of connected components of $\supp N$ intersecting $C$. 
\end{remark}

\subsection{Foliations of general type}
For surfaces, more can be said about the structure of the minimal model when $\mathcal F$ is of general type, i.e., $K_{\mathcal F}$ is big.  Theorem \ref{t_zeroloc} (which is proven in
\cite[Theorem 2]{McQuillan08}) gives such a description.

We begin with two basic observations:

\begin{proposition}\label{zeroinv}
Let $\mathcal{F}$ be a foliation of general type on a smooth surface $X$.
Let $K_{\mathcal{F}}=P+N$ be the Zariski decomposition of $K_{\mathcal F}$ and
let $C$ be a curve such that $P\cdot C=0$.

Then $C$ is $\mathcal{F}$-invariant.
\end{proposition}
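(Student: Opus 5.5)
We argue by contradiction and use the adjunction formula of Theorem~\ref{t_adjunction} for a non-invariant curve, exactly as in the proof of Theorem~\ref{thm_mmp_surface}. Suppose $C$ is not $\mathcal F$-invariant, so $\epsilon(C)=1$. Since $X$ is smooth, $C$ and $K_{\mathcal F}$ are Cartier. Let $\nu\colon \tilde C\to C$ be the normalisation. Theorem~\ref{t_adjunction} gives
\[
(K_{\mathcal F}+C)\cdot C=\deg \Diff_C(\mathcal F)\ge 0 ,
\]
because the restricted foliation on the curve $\tilde C$ has rank $0$, hence $K_{\mathcal F_{\tilde C}}=0$. Thus $K_{\mathcal F}\cdot C\ge -C^2$. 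We split according to the sign of $C^2$.

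\emph{Case $C^2\ge 0$.} Then $C$ is nef. Since $K_{\mathcal F}$ is big, $P$ is nef and big, with $P^2>0$. The Hodge index theorem then says that $P\cdot C=0$ forces $C\equiv 0$ or $C^2<0$. As $C$ is a nonzero effective curve, we must have $C^2<0$, contradicting the case assumption. So this case cannot occur.

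\emph{Case $C^2<0$.} Here we use the Zariski decomposition. Since $P\cdot C=0$, the Hodge index argument above shows that $C$ lies in the negative definite part orthogonal to $P$. I would next show that $C\subset\supp N$. If $C\not\subset\supp N$, then $N\cdot C\ge 0$ and $K_{\mathcal F}\cdot C=N\cdot C\ge 0$. This is no immediate contradiction, so one needs the standard fact that every curve $C$ with $P\cdot C=0$ lies in $\supp N$, up to adding $C$ to $N$.

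The cleaner route is the following. Write $N=aC+N'$ with $a\ge 0$ and $C\not\subset \supp N'$, and use adjunction:
\[
0\le \deg\Diff_C(\mathcal F)=(P+N+C)\cdot C=(a+1)C^2+N'\cdot C .
\]
This says $N'\cdot C\ge -(a+1)C^2>0$, so $C$ must meet $\supp N'$. The key extra input is that the components of $\supp N$ are $\mathcal F$-invariant and that the contraction $\psi\colon X\to X'$ of $\supp N$ from Theorem~\ref{thm_mmp_surface} and Remark~\ref{r_suppN} exists.

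To use this, push forward to $X'$. Then $\psi_*K_{\mathcal F}=K_{\mathcal F'}$ and $\psi^*K_{\mathcal F'}=P$, so the image curve $C'$ satisfies $K_{\mathcal F'}\cdot C'=P\cdot C=0$. Since $K_{\mathcal F'}$ is nef and big, Hodge index gives $C'^2<0$ on $X'$, which is $\mathbb Q$-factorial since it has cyclic quotient singularities. If $C'$ were not invariant, the adjunction formula on $X'$ would give
\[
0\le \deg\Diff_{C'}(\mathcal F')=K_{\mathcal F'}\cdot C'+C'^2=C'^2<0 ,
\]
a contradiction. Invariance of $C'$ implies invariance of $C$, since invariance is checked at the generic point, where $\psi$ is an isomorphism.

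The main obstacle is justifying adjunction on the singular surface $X'$, namely that $\Diff_{C'}(\mathcal F')\ge 0$ when $C'$ is merely $\mathbb Q$-Cartier. Theorem~\ref{t_adjunction} covers exactly this situation, since $K_{\mathcal F'}$ and $C'$ are $\mathbb Q$-Cartier on the $\mathbb Q$-factorial surface $X'$. The case $C\subset \supp N$ needs no separate argument: all components of $\supp N$ are contracted by $\psi$ and are invariant by the proof of Theorem~\ref{thm_mmp_surface}, which shows each contracted curve is invariant.
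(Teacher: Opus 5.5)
Your argument is correct and, once the detours are removed, it is the paper's proof: contract $\supp N$, note that $\psi^*K_{\mathcal F'}=P$ is nef and big so Hodge index gives $C'^2<0$, while adjunction for a non-invariant curve on $X'$ gives $C'^2=(K_{\mathcal F'}+C')\cdot C'\ge 0$ (you also make explicit the case $C\subset\supp N$, which the paper leaves implicit). The case split on $C^2$ and the computation with $N=aC+N'$ are unnecessary. The aside that every curve with $P\cdot C=0$ lies in $\supp N$ is false in general, since the chains and e.g.l.s of Theorem~\ref{t_zeroloc} are counterexamples, but your final argument does not use it.
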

\begin{proof}
Assume for the sake of contradiction that $C$ is not $\mathcal F$-invariant. Let $\varepsilon\colon X\rightarrow Y$ be the contraction of all the components of $N$
and let $\overline K=\epsilon_*K_{\mathcal F}$ and $\overline C=\epsilon_*C$. Then,
$$\overline K\cdot \overline {C}=\epsilon^*\overline K\cdot  C =P\cdot C =0.$$
Since $\overline K$ is big,
we have $\overline{C}^2<0$.
On the other hand, adjunction implies
$$\overline{C}^2=(\overline K+\overline{C})\cdot \overline{C}\geq 0$$
that is a contradiction.
Thus, $C$ is $\mathcal{F}$-invariant.
\end{proof}

We will also need the following consequence of the Camacho-Sad formula.

\begin{proposition}
\label{Cinv}
Let $\mathcal{F}$ be a foliation on a smooth surface $X$ and  
let $C$ be a  curve on $X$ whose components are $\mathcal F$-invariant and such that $C \cap \sing \mathcal F = \emptyset$.

Then $C^2=0$.
\end{proposition}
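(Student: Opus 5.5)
The plan is to apply the Camacho--Sad formula to each component of $C$ and sum. Write $C=\sum_j C_j$ as a sum of its irreducible components, each of which is $\mathcal F$-invariant and disjoint from $\sing \mathcal F$.

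\textbf{Step 1 (the singular locus along $C$).} Since $X$ is smooth and $C\cap\sing\mathcal F=\emptyset$, Theorem~\ref{t_terminal} (or simply Frobenius) gives, near every point of $C$, local coordinates $(x,y)$ in which $\mathcal F$ is defined by $dy$. Each $C_j$ is invariant, so it is locally a union of leaves, i.e. of curves $\{y=c\}$. Two consequences follow.
\begin{itemize}
\item[(a)] Each $C_j$ is smooth.
\item[(b)] Distinct components $C_j$ and $C_k$ are disjoint, since two distinct leaves of a smooth foliation cannot meet.
\end{itemize}
In particular $C_j\cdot C_k=0$ for $j\neq k$, so $C^2=\sum_j C_j^2$ and it suffices to show $C_j^2=0$ for each $j$.

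\textbf{Step 2 (one component).} Fix a component $C_j$. I would compute $C_j^2=\deg N_{C_j/X}$ by exhibiting a flat (locally constant) structure on the normal bundle. Along $C_j$, cover a neighbourhood by the foliated charts $(x,y)$ above with $C_j=\{y=0\}$. On overlaps of two such charts the transition functions preserve the foliation and the curve $C_j$, so they have the form
\[
y' = g(y),\qquad x' = h(x,y),\qquad g(0)=0,\ g'(0)\neq 0.
\]
The local sections $\partial_y|_{C_j}$ trivialise $N_{C_j/X}$. Their transition functions are $g'(0)$, which are constants, so $N_{C_j/X}$ is a flat line bundle and $\deg N_{C_j/X}=0$.

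This is exactly the content of the Camacho--Sad formula, $C_j^2=\sum_{p\in C_j}\mathrm{CS}(\mathcal F,C_j,p)$, whose local indices vanish at nonsingular points of $\mathcal F$. One could also just cite Camacho--Sad \cite{brunella00}.

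\textbf{Main obstacle.} The delicate point is making the flatness argument rigorous. One has to verify that the charts can be chosen so that $C_j$ is given by $y=0$ and the transition function for $y$ depends only on $y$; this follows from the invariance of the leaf space coordinate under the foliation. Since $C_j$ is compact (as implicitly required for $C^2$ to make sense), a flat line bundle on it has degree $0$, and summing over $j$ gives $C^2=0$.
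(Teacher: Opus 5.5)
Your proposal is correct and takes the paper's route: the paper simply invokes the Camacho--Sad formula (\cite[Theorem 3.2]{brunella00}), which expresses $C^2$ as a sum of local indices at the points of $C\cap\sing\mathcal F$, and here there are no such points. Your Step 2 is exactly the proof of that formula away from singular points, namely that the normal bundle of an invariant curve is flat wherever $\mathcal F$ is regular. Your Step 1 supplies the reduction to irreducible components that the paper leaves implicit, and it also covers non-reduced $C$, since $C^2=\sum_j m_j^2C_j^2$ once the components are disjoint.
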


\begin{proof}
This is an easy consequence of the Camacho-Sad formula, see \cite[Theorem 3.2]{brunella00} (see also \cite[Theorem 3.9]{pereira22} for a more general version).
\end{proof}

\begin{theorem}
\label{t_zeroloc}
Let $\mathcal{F}$ be a relatively minimal foliation of general type on a smooth surface $X$.
Let $K_{\mathcal{F}}=P+N$ be the Zariski decomposition of $K_{\mathcal F}$ and
let $Z$ be the union of all the curves $C$ such that  $P\cdot C=0$.

Then every curve in the support of $Z$ is $\mathcal F$-invariant and $Z$ is the disjoint union of:
\begin{enumerate}
\item $\supp N$;
\item disjoint chains of rational curves none of which is contained in $\supp N$;
\item configurations $\Gamma$ of rational curves whose dual graph is a cycle and such that ${\sing}(\mathcal{F})\cap \Gamma$ coincides with the singular locus of
$\Gamma$. 
\end{enumerate}
\end{theorem}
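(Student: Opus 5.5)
Invariance of every component of $Z$ is immediate from Proposition \ref{zeroinv}, since each such curve $C$ satisfies $P\cdot C=0$. For the structure, I would work on $\overline X$, obtained by contracting $\supp N$ via $\psi\colon X\to \overline X$ from Theorem \ref{thm_mmp_surface}. By Remark \ref{r_suppN}, $\supp N$ is a disjoint union of Hirzebruch--Jung chains of invariant curves, and $\overline X$ has cyclic quotient singularities. Moreover $\overline K:=\psi_*K_{\mathcal F}$ satisfies $\psi^*\overline K=P$, so it is nef and big, and the curves of $Z$ not in $\supp N$ are exactly the strict transforms of curves $\overline C\subset\overline X$ with $\overline K\cdot \overline C=0$.

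By the Hodge index theorem, the span of these curves (together with $\supp N$) is negative definite. The plan is then to apply Proposition \ref{p_contraction} and Proposition \ref{p_discr} to a connected component $\overline\Gamma=\sum \overline C_i$ of this locus on $\overline X$, using $\overline K\cdot \overline C_i=0$. Proposition \ref{p_contraction} gives
\[(K_{\overline X}+\overline\Gamma)\cdot \overline C_1\le \overline K\cdot \overline C_1 = 0\]
for every component $\overline C_1$, so $K_{\overline X}+\overline\Gamma$ is non-positive on each component of a negative-definite configuration. Proposition \ref{p_discr}(2) shows that $(\overline X,\overline\Gamma)$ is log canonical.

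Standard classification then applies. A negative-definite connected configuration $\overline\Gamma$ with $(K+\overline\Gamma)\cdot\overline C_i\le 0$ for all $i$ and $(\overline X,\overline\Gamma)$ log canonical contracts to a log canonical surface singularity along which $K+\Gamma$ is anti-nef. By \cite[Theorem 4.7]{KM98}-type classification, each component is a smooth rational curve, or $\overline\Gamma$ is a single nodal rational curve or a cycle. Moreover the dual graph is either a chain or a cycle, because branch points would force the boundary coefficient pattern of a $D$-type, which has coefficients $<1$ and is incompatible with reduced boundary $\overline\Gamma$. Elliptic components are excluded by adjunction, since $K\cdot C+C^2\le 0$ with $C^2<0$ forces $p_a(C)=0$ unless $C$ is a smooth elliptic curve with $C^2=0$, which is impossible.

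Next I would pull back to $X$. Where $\overline\Gamma$ meets a contracted chain of $\supp N$, Remark \ref{r_suppN} ensures that an invariant curve meets each component of $\supp N$ in at most one point, with $C\cdot N\le k/2$. Since $\overline K\cdot\overline C=0$ yields $K_{\mathcal F}\cdot C=-N\cdot C$, and $K_{\mathcal F}\cdot C\ge 0$ for invariant curves meeting few singular points (relative minimality), I would argue that curves of $Z$ outside $\supp N$ are disjoint from $\supp N$. This separates pieces (2) and (3) from (1).

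In the cycle case, the equality $K_{\mathcal F}\cdot C_i=0$ together with the inequality chain in the proof of Proposition \ref{p_contraction} must be an equality. This forces $\Diff_{C_i}(\mathcal F)$ to be supported exactly on the nodes of $\Gamma$, so $\sing\mathcal F\cap\Gamma=\sing\Gamma$ by Proposition \ref{p_diff}. Proposition \ref{Cinv} rules out components of $Z$ disjoint from $\sing\mathcal F$, since those would have self-intersection $0$.

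The main obstacle is the disjointness of types (2) and (3) from $\supp N$, and the exclusion of branched configurations. I expect this to require careful bookkeeping of $N\cdot C$ via Remark \ref{r_suppN} and of the equality cases in the adjunction inequality, rather than only the log canonical classification.
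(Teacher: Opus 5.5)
There is a genuine gap, and it is where you expected trouble. You want to show that the curves of $Z$ outside $\supp N$ are disjoint from $\supp N$. Your justification, "$K_{\mathcal F}\cdot C\ge 0$ by relative minimality", carries no information. Since $P\cdot C=0$ and $C\not\subset\supp N$, one has $K_{\mathcal F}\cdot C=N\cdot C\ge 0$ automatically.

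More seriously, disjointness is not what the numerics give. The paper argues on $X$, one curve at a time. Suppose $C\not\subset\supp N$ meets $\supp N$ in $k$ points. Each such point lies in $\sing\mathcal F$, so it contributes to the different by Proposition~\ref{p_diff}. Combining this with Remark~\ref{r_suppN} gives
\[
\tfrac{k}{2}\ \ge\ N\cdot C\ =\ K_{\mathcal F}\cdot C\ =\ 2g(C)-2+\deg\Diff_C(\mathcal F)\ \ge\ 2g(C)-2+k .
\]
Add three further inputs:
\begin{itemize}
\item Proposition~\ref{Cinv}, which kills $g=1$;
\item Camacho--Sad in the form $k<\deg\Diff_C(\mathcal F)$;
\item integrality of $N\cdot C$.
\end{itemize}
These leave $g(C)=0$ and $k\in\{0,2\}$. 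The case $k=2$ survives, with $\deg\Diff_C(\mathcal F)=3$, $N\cdot C=1$, and $C$ meeting two distinct components of $N$ of coefficient $\tfrac12$.

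This configuration does occur locally. On the minimal resolution of the $Q_8$-quotient of the foliation $\{xy=c\}$, the central curve of the $D_4$ graph meets two invariant $(-2)$-curves. Each of these contains a single singular point and would carry coefficient $\tfrac12$ in $N$. So chains of type (2) can touch $\supp N$ at an end curve, and the "disjoint union" can only be read as a partition of the components of $Z$. That weaker statement is what the paper's proof yields; your step aims at something false.

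The second problem is that the log canonical classification on $\overline X$ cannot replace this computation, because it does not see the foliation. The following configurations all satisfy your hypotheses (lc, negative definite, $K_{\overline X}+\overline\Gamma$ anti-nef on $\overline\Gamma$):
\begin{itemize}
\item a smooth elliptic curve with $(K_{\overline X}+\overline C)\cdot\overline C=0$, i.e.\ a simple elliptic singularity;
\item an end curve through one cyclic quotient point;
\item a rational curve through three or four cyclic quotient points.
\end{itemize}
The first shows your adjunction argument against elliptic components is wrong: $(K+C)\cdot C\le 0$ allows $p_a=1$ even when $C^2<0$. These configurations are excluded only by foliated inputs. Integrality of $K_{\mathcal F}\cdot C$ kills $k=1$. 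The inequality $k<\deg\Diff_C(\mathcal F)$ kills $k=3,4$. For $g=1$ one needs $\deg\Diff_C(\mathcal F)=K_{\mathcal F}\cdot C=0$ and then Proposition~\ref{Cinv}. Your appeal to Proposition~\ref{Cinv} only applies after this adjunction step shows that $C$ misses $\sing\mathcal F$.

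Your reason for excluding branch points is also not the right one. On $\overline X$, a curve of $\overline\Gamma$ meeting three others already has $(K_{\overline X}+\overline\Gamma)\cdot\overline C\ge 1$ by ordinary adjunction. On $X$, however, branching through components of $N$ is exactly the surviving $k=2$ case.

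Once the dichotomy $k\in\{0,2\}$ is established, the list of connected components follows by counting points of $\sing\mathcal F$ on each curve, as in the paper. No lc classification is needed.
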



\begin{proof}[Sketch of the Proof]
Let $C$ be a curve such that $P\cdot C=0$ and which is not contained in the support of $N$.
By Proposition \ref{zeroinv}, the curve $C$ is $\mathcal{F}$-invariant and by adjunction we have 
$$K_{\mathcal{F}}\cdot C=2g(C)-2+\deg\Diff_{C}(\mathcal F).$$
Let $k$ be the number of points in which $C$ intersects the support of $N$.
Then by adjunction, Proposition \ref{p_diff}  and Remark \ref{r_suppN}, we have
$$\frac{k}{2}\geq N\cdot C=K_{\mathcal{F}}\cdot C=2g(C)-2+\deg\Diff_{C}(\mathcal F)\geq 2g(C)-2+k. $$
The inequality $2g(C)-2+k\leq k/2$
implies either 
\begin{itemize}
\item $g(C) = 1$ and $k = 0$; or \item $g(C)=0$ and $k\leq 4$.
\end{itemize}

We first show that the case $g(C)= 1$ does not occur.  Indeed, suppose for the sake of contradiction that $g(C) = 1$. Since $K_{\mathcal F}\cdot C = 0$ we see that $\deg\Diff_{C}(\mathcal F) = 0$.  From 
the Camacho-Sad formula, Proposition \ref{Cinv}, we deduce that $C^2 = 0$ which contradicts the assumption that $P$ is big.
So we may assume that $g(C) = 0$ and $k \le 4$.

Camacho-Sad formula also implies that $k<\deg\Diff_{C}(\mathcal F)$ (cf. \cite[Theorem 3.4]{brunella00}).
In particular, 
$$k< \deg\Diff_{C}(\mathcal F)\leq 2+ \frac{k}{2}$$
implies $k\le 2$. 
Since $N\cdot C=K_{\mathcal F}\cdot C$ has to be an integer, it follows that if $k=0,1$ then 
$\deg\Diff_{C}(\mathcal F)=2$.
Finally, if $k=2$ then $\deg\Diff_{C}(\mathcal F)=3$ and, since $N\cdot C=1$, it follows that
the curve $C$ intersects $N$ in two components of coefficient $1/2$.
We notice that the two components have to be distinct because
each intersection point is a singular point for the foliation.
To conclude, we notice that the only possibilities for the connected components of $Z$
have to be those listed.
\end{proof}

\begin{definition}
A curve as in Case (3) of Theorem  \ref{t_zeroloc}
is called an \textit{elliptic Gorenstein leaf} (e.g.l. for short).
\end{definition}




  Moreover, we have the following characterisation due to McQuillan, \cite[Theorem IV.2.2 and Corollary IV.2.3]{McQuillan08}:

\begin{theorem}
\label{t_failure}
Let $\mathcal{F}$ be a relatively minimal foliation of general type on a smooth surface $X$.
\begin{enumerate}
    \item Suppose that $X$ contains an e.g.l. $\Gamma$.  Then $K_{\mathcal{F}}\vert_{\Gamma}$ is not a torsion divisor.

    \item $R(X, K_{\mathcal F})$ is finitely generated if and only if there are no e.g.l.s.
\end{enumerate}

\end{theorem}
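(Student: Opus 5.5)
We treat the two statements in turn; (2) will be deduced from (1) together with Theorem \ref{t_zeroloc}.

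\emph{Part (1).} Let $\Gamma=\sum \Gamma_i$ be an e.g.l., a cycle of smooth rational $\mathcal F$-invariant curves. By Theorem \ref{t_zeroloc} we have $P\cdot\Gamma_i=0$, and $\Gamma$ is disjoint from $\supp N$. Hence $K_{\mathcal F}\cdot\Gamma_i=0$ for every $i$, so $K_{\mathcal F}|_\Gamma$ has degree zero on each component. Since $\Gamma$ is a cycle of $\mathbb P^1$'s, $\mathrm{Pic}^0(\Gamma)\simeq\mathbb C^*$, and the class of $K_{\mathcal F}|_\Gamma$ there is computed by a product of residues. The plan is as follows.

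\begin{enumerate}
\item Apply adjunction (Theorem \ref{t_adjunction}) on each component. Since $\sing\mathcal F\cap\Gamma$ is exactly the set of nodes, $\Diff_{\Gamma_i}(\mathcal F)$ is the sum of the two nodes on $\Gamma_i$. The restriction of $T_{\mathcal F}$ to $\Gamma_i$ is then identified with $T_{\Gamma_i}(-\log(\text{nodes}))$, which is trivialised by the logarithmic vector field $z\partial_z$.
\item At each node the foliation is reduced and canonical, with local generator $x\partial_x+\lambda y\partial_y$, where $\lambda=\lambda_p$ is the eigenvalue ratio.
\item Compare the trivialisations $z\partial_z$ coming from the two branches at each node. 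This shows that the gluing datum of $T_{\mathcal F}|_\Gamma$ in $\mathrm{Pic}^0(\Gamma)=\mathbb C^*$ is, up to sign, $\prod_p \lambda_p^{\pm1}$.
\item Use the Camacho--Sad formula, $\Gamma_i^2=\sum_{p\in\Gamma_i}\mathrm{CS}(\mathcal F,\Gamma_i,p)$, where the indices are $-\lambda_p^{\pm1}$. Combined with negative definiteness of the intersection matrix of $\Gamma$, which holds because $P$ is big and $P\cdot\Gamma_i=0$, this shows the product cannot be a root of unity.
\end{enumerate}

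The main obstacle is this last step. If $\prod\lambda_p^{\pm1}$ were a root of unity, the $\lambda_p$ would be forced into a resonance or rationality pattern. With a positive rational product, the Camacho--Sad system yields a nonzero vector $v$ with $v^TMv\ge 0$, where $M$ is the intersection matrix, contradicting negative definiteness. I would attempt this by an explicit computation with the cyclic matrix $M$. Alternatively, I would use the Hodge index theorem applied to the $\mathbb Q$-divisor $\sum a_i\Gamma_i$, with $a_i$ built from logarithms of the eigenvalues.

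\emph{Part (2).} Use Zariski's criterion: $R(X,K_{\mathcal F})$ is finitely generated if and only if $P$ is semiample on the model $X'$ obtained from Theorem \ref{thm_mmp_surface}.

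If an e.g.l. exists, then $P|_\Gamma=K_{\mathcal F}|_\Gamma$ is not torsion by (1). So $P$ is not semiample, and $R$ is not finitely generated.

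Conversely, if there is no e.g.l., then by Theorem \ref{t_zeroloc} the null locus of $P$ consists of $\supp N$ together with chains of rational curves. On $X'$ these chains, with cyclic quotient singularities, contract to klt points. So $P$ descends to a nef and big divisor that is numerically trivial exactly on contractible configurations. Its restriction to each tree of rational curves is torsion, since $\mathrm{Pic}^0$ of a tree of $\mathbb P^1$'s is trivial. Artin's contractibility criterion and the semiampleness of such divisors (Keel's theorem, or abundance for the log surface pair) then give that $P$ is semiample.
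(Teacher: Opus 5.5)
The paper gives no argument here and simply cites McQuillan, so I am judging your proposal on its own. Your set-up for (1) is right. Since $\deg\Diff_{\Gamma_i}(\mathcal F)=2$ and each node contributes at least $1$, no node is a saddle-node, and $T_{\mathcal F}|_{\Gamma_i}\cong T_{\Gamma_i}(-\log)$. Orient the cycle and set $\lambda_p=\mathrm{CS}(\mathcal F,\Gamma_{\mathrm{in}(p)},p)$, so that $\mathrm{CS}(\mathcal F,\Gamma_{\mathrm{out}(p)},p)=\lambda_p^{-1}$; comparing residues then gives the class $\prod_p(-\lambda_p)\in\mathbb C^*$.

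\textbf{The gap in (1).} Step 4 is the whole content of (1), and you have not supplied it. The mechanism you suggest is also misdirected. A root-of-unity product forces no resonance or rationality on the individual $\lambda_p$, which may be non-real, so in general there is no real null vector built from the $\lambda_p$ themselves. Logarithms are not what is needed either. The missing observation is that only the modulus matters. If the class is torsion, then $\prod_p|\lambda_p|=1$, so you can choose positive weights $w_i$ on the components with $w_{\mathrm{out}(p)}=|\lambda_p|\,w_{\mathrm{in}(p)}$, consistently around the cycle. Since $\Gamma_i^2\in\mathbb Z$, Camacho--Sad gives $\Gamma_i^2=\mathrm{Re}(\lambda_p^{-1})+\mathrm{Re}(\lambda_{p'})$, where $p$ and $p'$ are the nodes at which $\Gamma_i$ is outgoing, resp.\ incoming. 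Regrouping the quadratic form node by node gives
\[
w^{T}Mw=\sum_p\Bigl(\mathrm{Re}(\lambda_p)\,w_{\mathrm{in}(p)}^2+\mathrm{Re}(\lambda_p^{-1})\,w_{\mathrm{out}(p)}^2+2w_{\mathrm{in}(p)}w_{\mathrm{out}(p)}\Bigr)=2\sum_p w_{\mathrm{in}(p)}^2\bigl(\mathrm{Re}(\lambda_p)+|\lambda_p|\bigr)\ge 0 .
\]
This contradicts negative definiteness of $M$, and the same bookkeeping covers cycles of length $1$ and $2$. So the class of $K_{\mathcal F}|_\Gamma$ does not even lie on the unit circle.

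\textbf{Issues in (2).} The implication from an e.g.l.\ to non-finite generation is fine once (1) is established. The converse has two problems.
\begin{enumerate}
\item Keel's theorem is a positive-characteristic statement and is not available over $\mathbb C$.
\item Torsion of $P$ on the reduced tree is not enough to descend it. You need a multiple of $P$ to be trivial on a formal neighbourhood of each connected component of the null locus, and this holds when that component contracts to a rational singularity.
\end{enumerate}
Your claim that these points are klt is not justified. The case $k=2$ in the proof of Theorem \ref{t_zeroloc} lets a chain end meet two components of $N$ with coefficient $\tfrac12$, which gives $\widetilde D$-type log canonical, non-klt points. Log canonicity is enough, though. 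You can obtain it from Propositions \ref{p_discr} and \ref{p_contraction} plus the negativity lemma. Together with the absence of elliptic curves and cycles in the configuration, it gives rationality. Nakai--Moishezon then makes the descended divisor ample.
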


We remark that e.g.l.s do indeed exist: examples can be found by taking covers of minimal resolutions of Bailey-Borel compactifications of bidisk quotients, cf. \cite[Example II.2.3]{McQuillan08}. Thus, finite generation does not hold in the context of foliations, in contrast to the case of varieties (cf. \cite{BCHM}).

\begin{question}
    Is there a similar characterisation in higher dimensions of foliations of general type for which $R(X, K_{\mathcal F})$ is not finitely generated?
\end{question}

As a first step towards this question it might be worthwhile answering the following.

\begin{question}
    Is there a structural result like Theorem \ref{t_zeroloc} for foliations on threefolds?
\end{question}

\section{MMP in higher dimensions}
\label{s_flips}

In this section we review some features of the MMP for foliations in higher dimensions, with a particular focus on the threefold case.  Since the general structure of the MMP has been given many survey treatments before (see for instance \cite{Cascini21}),
we will not review it here, but rather focus on some of the particular tools and techniques used in the MMP.

We will especially focus on a new feature of the MMP which starts in dimension three, namely, the flip.

We will also need the following result.

\begin{theorem}[Reeb stability theorem]\label{t_reeb}
Let $X$ be a normal variety and let $\mathcal F$ be a rank one foliation on $X$. Let $C \subset X$
be an $\mathcal F$-invariant curve and suppose that  $\mathcal F$
is terminal at every closed point $P\in C$.  Suppose moreover that $K_{\mathcal F}\cdot C<0$.  

Then 
$C$ moves in a family of $\mathcal F$-invariant curves covering $X$.
\end{theorem}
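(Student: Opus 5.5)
Since $\mathcal F$ is terminal at every closed point of $C$, Theorem \ref{t_terminal} applies along $C$, after first passing to a local index one cover so that $K_{\mathcal F}$ is Cartier near $C$. It gives that near each $P\in C$ the foliation is induced by a holomorphic submersion $f\colon U\to B$, so $C$ lies in the smooth locus of $\mathcal F$ (up to a quasi-\'etale cover) and is a leaf closure. The strategy is the classical Reeb argument: since $C$ is a compact leaf, the local first integrals glue along $C$ up to holonomy. We show that the holonomy group of $C$ is finite. Then a neighbourhood of $C$ is a quotient $(\widetilde C\times \mathbb D^{n-1})/G$ with $G$ finite, and the nearby leaves are compact curves deforming $C$.

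\textbf{Step 1: $C$ is rational.} Since $C$ is invariant and $\mathcal F$ is smooth along $C$ (modulo the cyclic cover), $T_{\mathcal F}|_C\to T_X|_C$ factors through $T_C$ by Lemma \ref{l_invariant}. Adjunction in the invariant case, Theorem \ref{t_adjunction} applied on the normalisation $\nu\colon \widetilde C\to C$, gives
\[
K_{\mathcal F}\cdot C=\deg K_{\widetilde C}+\deg \Diff_{\widetilde C}(\mathcal F)\ge 2g(\widetilde C)-2 .
\]
Hence $K_{\mathcal F}\cdot C<0$ forces $\widetilde C\cong\mathbb P^1$.

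Orbifold points must be tracked here. Where $K_{\mathcal F}$ is only $\mathbb Q$-Cartier, the local index one covers ramify only at finitely many points of $C$, so $\widetilde C$ carries an orbifold structure $(\mathbb P^1, \sum(1-1/m_i)p_i)$. The Riemann--Hurwitz formula (Proposition \ref{p_RH}) converts $K_{\mathcal F}\cdot C<0$ into negativity of the orbifold Euler characteristic.

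\textbf{Step 2: finite holonomy.} The holonomy representation $\pi_1^{orb}(\widetilde C)\to \mathrm{Diff}(\mathbb C^{n-1},0)$ is defined using the transversal discs given by the submersions $f$. The orbifold fundamental group of a negative-degree orbifold $\mathbb P^1$ is finite: it is spherical, with at most three orbifold points of type $(2,2,k)$, $(2,3,3)$, $(2,3,4)$ or $(2,3,5)$. Hence the holonomy group $G$ is finite. A finite group of germs of biholomorphisms is linearizable by averaging. The standard Reeb construction then exhibits an analytic neighbourhood of $C$ as $(\widehat C\times \mathbb D^{n-1})/G$, where $\widehat C\to \widetilde C$ is the universal orbifold cover, itself a $\mathbb P^1$. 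Every leaf near $C$ is therefore compact, and is a finite quotient of $\mathbb P^1$ mapping to $X$.

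\textbf{Step 3: algebraicity and globalisation.} The nearby compact leaves give an analytic family of $\mathcal F$-invariant curves through $C$. This family corresponds to an open subset of an irreducible component of the Chow variety (or Hilbert scheme) of $X$ containing $[C]$, of dimension $n-1$. Its general member is an invariant curve $C_t$ numerically equivalent to $C$. In particular $K_{\mathcal F}\cdot C_t<0$, and $C_t$ again lies in the terminal locus, since the non-terminal locus is closed and is avoided near $C$.

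The locus swept out by the closure of this component is therefore closed. It is also open, because Steps 1 and 2 apply to every member at which $\mathcal F$ is terminal. Degenerate members of the family lie in the closure, and a limit cycle of invariant curves is still invariant. By irreducibility of $X$, the family covers $X$.

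\textbf{Main obstacle.} I expect the main obstacle to be the openness part of Step 3. Limits of the family may pass through non-terminal points, and the Reeb argument must then be re-run at such a member. The cleanest route is to note that the dimension of the family is $n-1$ at the generic point. Its image is then constructible of full dimension, hence dense. Since the family is proper over its image, that image is closed, and this suffices.
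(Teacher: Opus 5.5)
Your proposal is correct and follows essentially the route of the proof the paper defers to (\cite[Proposition 3.3]{CS20}). Terminality gives orbifold-smoothness of $\mathcal F$ along $C$, and the orbifold adjunction $K_{\mathcal F}\cdot C=2g(\widetilde C)-2+\sum_i(1-1/m_i)<0$ makes $C$ a spherical orbifold $\mathbb P^1$ with finite holonomy. Reeb stability, together with properness of the relevant Chow component for $X$ projective, then spreads the nearby compact leaves over $X$, so the openness issue you flag in Step 3 never actually arises.

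There are some minor slips. First, the general nearby leaf is numerically $|G|\,C$ rather than $C$. Second, for a merely normal $X$ the transversal is a possibly singular germ rather than $\mathbb D^{n-1}$. Third, Theorem \ref{t_adjunction} concerns divisors, so Step 1 should use the nonzero map $T_{\mathcal F}|_{\widetilde C}\to T_{\widetilde C}$ directly.
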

\begin{proof}
See \cite[Proposition 3.3]{CS20}.
%
\end{proof}

\subsection{Existence of flips for rank one foliations on threefolds}

We recall that the existence of flipping contractions in the classical setting is a consequence of the base point free theorem, which is no longer valid in the same generality in the setting
of foliations. Rather, the flipping contraction must be constructed via more {\it ad hoc} methods.  
We will place these considerations to the side for the moment and consider the following setting:

Let $X$ be a projective $\mathbb Q$-factorial klt threefold and let $\mathcal F$ be a rank one foliation on $X$ with simple singularities (we refer to \cite[Definition 2.24]{CS20} for the definition 
of simple singularities for rank one foliations).
Let $f\colon X \to Z$ be the contraction of a $K_{\mathcal F}$-negative extremal ray $R \subset \overline{NE}(X)$ and suppose that $\exc f$ is of codimension two.

In this setting we have the following facts:
\begin{enumerate}
\item By the Cone theorem for rank one foliations (cf. \cite[Theorem 1.2]{CS23b}) we can deduce that $\exc f$ is a union of $\mathcal F$-invariant rational curves.

\item Thanks to Reeb stability theorem (cf. Theorem \ref{t_reeb}) and adjunction we know that for each irreducible component $C$ of $\exc f$ there is exactly one closed point $P \in C$
where $\mathcal F$ is not terminal.

\item It is possible to show that each connected component of $\exc f$ is smooth and irreducible.
\end{enumerate}

Since the construction of the flip is (analytic) local on the base, we may freely replace $Z$ by an analytic neighbourhood of $f(C) \in Z$.

Our first observation is that, after possibly replacing $Z$ by a neighbourhood of $f(C)$, $\mathcal F$ is $1$-complemented, in other words:

\begin{proposition}
\label{prop_complement}
Set-up as above.  There exists a reduced divisor $T$ such that $(\mathcal F, T)$ is log canonical and $K_{\mathcal F}+T \sim_{\mathbb Q} f^*M$
where $M$ is a Cartier divisor on $Z$.
\end{proposition}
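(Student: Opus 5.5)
We aim to find, near a connected component $C$ of $\exc f$ (a smooth rational $\mathcal F$-invariant curve), a germ of a non-invariant divisor $T$ that is everywhere transverse to $\mathcal F$ away from finitely many points, and to show that $K_{\mathcal F}+T$ is numerically trivial on $C$. Since $\rho(X/Z)=1$ and $X$ is $\mathbb Q$-factorial klt, relative numerical triviality over a small analytic neighbourhood of $f(C)$ upgrades to $K_{\mathcal F}+T\sim_{\mathbb Q} f^*M$ once we shrink $Z$ (Pic of a Stein neighbourhood of a point being trivial up to torsion, using the relative Kawamata–Viehweg type vanishing for the klt $X$ over $Z$).

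The construction of $T$ proceeds as follows. By fact (2) there is a unique point $P\in C$ where $\mathcal F$ is not terminal; at every other point of $C$, Theorem \ref{t_terminal} gives a local submersion inducing $\mathcal F$, so $\mathcal F$ near $C\setminus\{P\}$ is a smooth fibration with $C$ a leaf. We take a general point $Q\in C\setminus\{P\}$ and a small transverse disc-slice $T_Q$ through $Q$, i.e. in coordinates of Lemma \ref{lem_ev_trans}(1) the divisor $\{0\}\times Y$. Then we flow this slice along the leaves: equivalently, we choose $T$ to be the closure of a germ of surface through $Q$ transverse to $\mathcal F$ and meeting $C$ only at $Q$. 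By Lemma \ref{lem_ev_trans}, $(\mathcal F,T)$ is log canonical near $Q$, and away from $C$ we may shrink. Now compute by adjunction along the invariant curve $C$ (Theorem \ref{t_adjunction} with $\epsilon=0$): $K_{\mathcal F}\cdot C=-2+\deg\Diff_C(\mathcal F)$, and $\Diff_C(\mathcal F)$ is supported at $P$ only (Proposition \ref{p_diff} plus terminality elsewhere). Since $K_{\mathcal F}\cdot C<0$ and $\deg\Diff_C>0$ when $P\in\sing\mathcal F$, the natural expectation is $K_{\mathcal F}\cdot C=-1$ for the index-one situation, or more generally $-1/m$ with $m$ the index at $P$; then $T\cdot C=1/m$ is obtained by taking $T$ to be the image of a transverse slice at $P$ itself, or a suitable $\mathbb Q$-multiple, making $(K_{\mathcal F}+T)\cdot C=0$.

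Hence I would in fact place $T$ through $P$: using the simple singularity normal form at $P$ (a quotient of $x\partial_x+\dots$ type with $C$ a separatrix), choose $T$ to be the (formal, then convergent by the explicit normal form) invariant-complementary coordinate surface $\{z=0\}$ transverse to $C$, i.e. $T$ is the strong/weak separatrix-free coordinate hyperplane containing the other eigendirections. Using Proposition \ref{prop_codim_1_zero} in reverse, $(\mathcal F,T)$ is log canonical at $P$ precisely when the generator of $T_{\mathcal F}(-\log T)$ is radial-like, which we check in the normal form. Then adjunction on $T$ (Theorem \ref{thm_adj_sing}) and on $C$ gives $(K_{\mathcal F}+T)\cdot C=-2+\deg\Diff_C(\mathcal F)+T\cdot C=0$.

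The main obstacle is the local analysis at $P$: showing that there is a genuinely holomorphic (not merely formal) transverse surface through $P$ with $(\mathcal F,T)$ log canonical and that the local intersection numbers exactly cancel $K_{\mathcal F}\cdot C$. I would first treat the case where $X$ is smooth at $P$ and the linear part is non-resonant, then reduce the quotient-singularity case via the index-one cover and Riemann–Hurwitz (Proposition \ref{p_RH}), since log canonicity is preserved under quasi-étale covers (Remark \ref{rem_1}).
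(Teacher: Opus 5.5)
Your proposal breaks down at the step you call the main obstacle: $T$ can never pass through the non-terminal point $P$. Since $X$ is $\mathbb Q$-factorial, $T$ is $\mathbb Q$-Cartier. By Remark~\ref{rem_1}, log canonicity of $(\mathcal F,T)$ then forces $T$ to be transverse to $\mathcal F$ at \emph{every} point of $T$: after a quasi-\'etale cover, $\mathcal F$ is a smooth fibration there and $T$ is a section (Lemma~\ref{lem_ev_trans}). So ``transverse away from finitely many points'' is not enough. At $P$ the foliation is not terminal, hence by Theorem~\ref{t_terminal} it is still singular on the index one cover, and no reduced $T\ni P$ gives a log canonical pair. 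Proposition~\ref{prop_codim_1_zero} cannot rescue this: on a $\mathbb Q$-factorial $X$ its small modification is the identity, so it only reproduces the same transversality. Your concrete candidate fails even more visibly. In a normal form like $x\partial_x+\lambda y\partial_y+\mu z\partial_z$ with $C$ the $x$-axis, $\{z=0\}$ is $\mathcal F$-invariant, and an invariant boundary component of coefficient one has discrepancy $-1<0=-\epsilon$. Likewise, a ``suitable $\mathbb Q$-multiple'' of $T$ is not allowed, since $T$ must be reduced. As a sanity check on surfaces: after blowing up a point of a smooth foliation, the complement along the exceptional curve $E$ is the strict transform of a transverse curve, which meets $E$ away from the saddle point.

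Your abandoned first idea, a transverse slice at a point of $C\setminus\{P\}$, is the right one. What is missing is the numerics that make it close up. Every point $Q\in T\cap C$ is terminal, so on a local quasi-\'etale cover $\mathcal F$ is a smooth fibration with $C$ a leaf. A transverse slice therefore contributes $(T\cdot C)_Q=1/m_Q$, where $m_Q\ge 1$ is the degree of that cover over $C$ ($m_Q=1$ at smooth points of $X$). Adjunction along $C$ gives $K_{\mathcal F}\cdot C=-2+\mult_P\Diff_C(\mathcal F)+\sum_{Q\neq P}(1-1/m_Q)$ with $\mult_P\Diff_C(\mathcal F)\ge 1$. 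In particular $\Diff_C(\mathcal F)$ is \emph{not} supported only at $P$, and the index at $P$ plays no role in the value of $K_{\mathcal F}\cdot C$. Comparing the two, a reduced transverse $T$ with $(K_{\mathcal F}+T)\cdot C=0$ exists only if $\mult_P\Diff_C(\mathcal F)=1$ and at most one $Q\in C\setminus\{P\}$ has $m_Q>1$. In that case $K_{\mathcal F}\cdot C=-1$, and you take a slice at a general point; or $K_{\mathcal F}\cdot C=-1/m_Q$, and you take the image of an invariant transverse slice on the local cover at that $Q$.

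Proving $\mult_P\Diff_C(\mathcal F)=1$ means showing that, on the index one cover, the generator restricted to the preimage of $C$ vanishes only to first order at $P$, so $C$ is not, say, the centre manifold of a saddle-node. This needs the local analysis of the simple singularity at $P$. The paper does not reprove any of this; it cites \cite[Proposition 5.6]{CS20}, or alternatively McQuillan's description of the formal neighbourhood of flipping curves \cite{McQ05}.

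Finally, your passage from $K_{\mathcal F}+T\equiv_f 0$ to $K_{\mathcal F}+T\sim_{\mathbb Q}f^*M$ is not justified as written. Relative Kawamata--Viehweg vanishing gives $R^1f_*\mathcal O_X=0$ only when $f$ is $(K_X+\Delta)$-negative for some klt $\Delta$, and that does not follow from $f$ being $K_{\mathcal F}$-negative.
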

\begin{proof}
    This is \cite[Proposition 5.6]{CS20}.  Alternatively, this is also a consequence of the description of the formal neighbourhood of flipping curves given in \cite[Proposition/Summary II.g.3]{McQ05}.
\end{proof}

Let us define $\mathcal G := f_*\mathcal F$ and $D:= f_*T$.

\begin{proposition} 
Set-up as above. 
In an analytic neighbourhood of $C$, $\mathcal F$
admits a meromorphic first integral.
\end{proposition}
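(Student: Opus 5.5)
We use the complement of Proposition~\ref{prop_complement} and work downstairs on $Z$. First we analyse $(\mathcal G, D)$ near $z := f(C)$. Since $f$ is small, $K_{\mathcal F}+T = f^*(K_{\mathcal G}+D)$, so $K_{\mathcal G}+D \sim_{\mathbb Q} M$ is $\mathbb Q$-Cartier. Because $f$ is crepant for this pair, $(\mathcal G, D)$ is log canonical. After shrinking $Z$ we may assume $K_{\mathcal G}+D$ is Cartier; if it is only $\mathbb Q$-Cartier, we first pass to the index one cover, which is quasi-\'etale, and descend at the end. We may also assume $D \neq 0$ through $z$. Otherwise $K_{\mathcal G}$ would be $\mathbb Q$-Cartier with $K_{\mathcal F} = f^*K_{\mathcal G}$, which contradicts $K_{\mathcal F}\cdot C<0$. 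The same argument shows that $T$ must meet $C$.

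Next we apply Proposition~\ref{prop_codim_1_zero}(2) to the germ $z\in Z$ with the pair $(\mathcal G, D)$. This gives a generator $V$ of $T_{\mathcal G}(-\log D)$ whose derivation is radial: in holomorphic coordinates on an embedding $Z\subset \mathbb C^N$, it reads $\partial=\sum n_i x_i\partial_{x_i}$ with $n_i\in\mathbb Z_{\ge 0}$. A radial vector field with integer weights is the infinitesimal generator of a $\mathbb C^*$-action $t\cdot x = (t^{n_i}x_i)$, and its orbits are algebraic. Concretely, for any two coordinates $x_i, x_j$ with $n_i,n_j>0$, the function $x_i^{n_j}/x_j^{n_i}$ is killed by $\partial$. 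These weighted-homogeneous ratios restrict to meromorphic functions on $Z$ that are constant along the leaves of $\mathcal G$. Since $\mathcal G$ has rank one on a threefold, we need two algebraically independent such functions. For that we need the coordinates with positive weight to cut out $z$ locally on $Z$, and at least three of them with independent ratios restricting nontrivially to $Z$.

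This independence is the step I expect to be the main obstacle. My approach is to use Proposition~\ref{prop_codim_1_zero}(1). The weighted blow-up $b\colon Z'\to Z$ is a small modification on which $D'$ is everywhere transverse to $\mathcal G' = b^{-1}\mathcal G$, and near $D'$ the foliation $\mathcal G'$ admits a holomorphic first integral $g\colon U\to D'$ onto the surface $D'$. The weighted-homogeneous ratios are exactly the pullbacks via $g$ of meromorphic functions on the weighted-projective quotient. Since $D'$ is two-dimensional, this yields two independent invariant meromorphic functions. Alternatively, I would argue that if all weights on the essential coordinates vanished, then $\sing\partial$ would not be $D$ near $z$, which contradicts the fact that $V$ generates $T_{\mathcal G}(-\log D)$ with $D$ passing through $z$.

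Finally, we pull the two invariant meromorphic functions back along $f$. Since $f$ is an isomorphism outside $C$ and $\mathcal F = f^{-1}\mathcal G$, their pullbacks are meromorphic functions on an analytic neighbourhood $f^{-1}(Z)$ of $C$, constant along the leaves of $\mathcal F$. Together they give a meromorphic first integral of $\mathcal F$. If we passed to a quasi-\'etale cover, we replace each function by the symmetric functions of its Galois conjugates. This preserves invariance, and the resulting functions remain independent because the cover is finite.
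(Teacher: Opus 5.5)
Your overall route is the paper's. The complement of Proposition~\ref{prop_complement} makes $(\mathcal G, D)$ log canonical with $K_{\mathcal G}+D$ Cartier (after an index one cover). The generator of $T_{\mathcal G}(-\log D)$ is then a log canonical vector field vanishing along $D$, Proposition~\ref{prop_codim_1_zero} puts it in radial form, and the first integral is pulled back along $f$. Your additions are fine and make explicit what the paper leaves implicit: that $D$ passes through $f(C)$ because $T\cdot C>0$, the index one cover, and descent by symmetric functions.

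The independence step, which the paper also leaves implicit, is argued incorrectly as written, though it is easy to repair.
\begin{itemize}
\item Your requirement that the positive-weight coordinates cut out $z$ can never hold. Their common zero set on $Z$ is exactly $\sing\partial$, which contains the surface $D$.
\item The ratios $x_i^{n_j}/x_j^{n_i}$ alone need not give two independent functions. In the paper's example $Z=\{xy+zw=0\}$, $\partial=x\partial_x+z\partial_z$, the only such ratio is $x/z=-w/y$, a single function.
\item Describing the ratios as pullbacks via $g$ from a ``weighted-projective quotient'' is also off. Since $b$ is small, $D'\to D$ is bimeromorphic, so generically no fibre direction survives on $D'$.
\item Your alternative argument only shows that some weight is positive, which says nothing about independence.
\end{itemize}

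The missing first integrals are the weight-zero coordinates, which you discarded. Each of them is killed by $\partial$. Together they define the holomorphic map $x\mapsto\lim_{t\to 0}t\cdot x$, which retracts $Z$ onto $Z\cap\{x_i=0 : n_i>0\}=\sing\partial\supseteq D$. This image is two-dimensional, so two generic linear combinations of these coordinates are independent first integrals; in the example they are $y$ and $w$.

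Equivalently, in your second approach, compose $g\colon U\to D'$ with any two independent meromorphic functions on the surface $D'$ (for instance pulled back from $D$), not just with the weighted ratios. This is precisely the mechanism in the proof of Proposition~\ref{prop_codim_1_zero}(2).
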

\begin{proof}
    By Proposition \ref{prop_complement}
    $K_{\mathcal G}+D$ is Cartier,
    and so $T_{\mathcal G}(-\log D)$ is generated
by a log canonical vector field $\partial$.  Since $\partial$ has a codimension one zero, namely, $D$, we may apply Proposition \ref{prop_codim_1_zero}
to deduce that, in a Euclidean neighbourhood of $f(C)$, $\mathcal G$ admits a meromorphic first integral.
\end{proof}

With this meromorphic first integral in hand, it is then comparatively easy to construct the flip.  For instance, one can use this meromorphic first integral to produce a $\mathbb Q$-divisor $G \ge 0$
on $X$ such that $G$ is $\mathcal F$-invariant and $C$ is a log canonical centre of $(X, G)$, cf. \cite[Proposition 5.11]{CS20}.  Using some results comparing adjunction on foliations to subadjunction on log canonical
pairs, cf. \cite[Theorem 4.9]{CS20}, we can deduce that $(K_X+G)\cdot C<0$ and that $(X, G)$ is log canonical.  We can therefore construct the $K_{\mathcal F}$-flip by constructing the $(K_{X}+G)$-flip.
We have therefore proven the following (cf. \cite[Theorem 1.1]{CS20}):

\begin{theorem}
Set-up as above.  Then, the $K_{\mathcal F}$-flip exists.
\end{theorem}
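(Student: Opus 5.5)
The plan is to realise the $K_{\mathcal F}$-flip as a $(K_X+G)$-flip for a suitable log canonical pair $(X,G)$, following the outline just given. The classical log MMP in dimension three supplies the flip once $(X,G)$ is log canonical, $X$ is $\mathbb Q$-factorial klt, and $-(K_X+G)$ is $f$-ample. The work is to construct $G$ and check these properties. Since everything is analytic-local over $Z$, I replace $Z$ by a small Euclidean neighbourhood of $f(C)$ and assume $\exc f = C$ is a smooth irreducible rational curve, using facts (1)--(3) above.

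\emph{Step 1 (complement and first integral).} By Proposition~\ref{prop_complement} there is a reduced $T$ with $(\mathcal F,T)$ log canonical and $K_{\mathcal F}+T\sim_{\mathbb Q} f^*M$. Pushing forward, $K_{\mathcal G}+D$ is ($\mathbb Q$-)Cartier, with $\mathcal G=f_*\mathcal F$ and $D=f_*T$. Passing to an index one cover if necessary, $T_{\mathcal G}(-\log D)$ is generated by a log canonical vector field $\partial$ with codimension one zero locus $D$. Proposition~\ref{prop_codim_1_zero}(2) then gives coordinates in which $\partial=\sum n_ix_i\partial_{x_i}$ with $n_i\in\mathbb Z_{\ge 0}$. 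Monomial ratios $x^{\mathbf a}$ with $\sum a_in_i=0$ provide meromorphic first integrals $g_1,g_2$ of $\mathcal G$, hence of $\mathcal F$ near $C$, whose level sets are the leaves.

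\emph{Step 2 (construction of $G$).} Let $H_1,H_2$ be general members of the pencils spanned by the $g_j$, pulled back to $X$, together with the invariant components through $C$. Every component of these is $\mathcal F$-invariant. Set $G=\sum c_iG_i$ with coefficients chosen so that $C$ becomes a log canonical centre of $(X,G)$. Concretely, I take the smallest $c$ for which $(X,c(H_1+H_2))$ fails to be klt along $C$, perturbing with the invariant divisors through $C$ if needed. This is \cite[Proposition 5.11]{CS20}.

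\emph{Step 3 (negativity and log canonicity).} Here I use the comparison between foliated adjunction and subadjunction \cite[Theorem 4.9]{CS20}. Since $C$ is $\mathcal F$-invariant and an lc centre, subadjunction gives $(K_X+G)|_C=K_C+\Theta$. Meanwhile Theorem~\ref{t_adjunction} gives $K_{\mathcal F}|_C=K_C+\Diff_C(\mathcal F)$. By fact (2), $\Diff_C(\mathcal F)$ is supported at the unique non-terminal point $P$, and Proposition~\ref{p_diff} forces positive multiplicity there. Comparing $\Theta$ with $\Diff_C$ at $P$ yields $(K_X+G)\cdot C\le K_{\mathcal F}\cdot C<0$. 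Log canonicity of $(X,G)$ near $C$ follows because $G$ is built from level sets of a first integral of a radial field, which is toroidal after the weighted blow-up of Proposition~\ref{prop_codim_1_zero}(1). Since $\rho(X/Z)=1$, $-(K_X+G)$ is $f$-ample, and the lc flip $X^+\to Z$ exists in dimension three.

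\emph{Step 4.} Finally, $X^+\to Z$ is small and $K_{\mathcal F^+}$ is $\mathbb Q$-Cartier and ample over $Z$. This holds because $K_{\mathcal F^+}+T^+\sim_{\mathbb Q}$ pullback of $M$, and $T^+$ is anti-ample since $T\cdot C>0$. So $X^+$ is the $K_{\mathcal F}$-flip.

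I expect Step 3 to be the main obstacle. Controlling the local contribution at $P$ to compare $\Theta$ with $\Diff_C(\mathcal F)$ needs the explicit normal form of simple singularities along $C$, and so does verifying that the chosen $G$ is log canonical rather than worse.
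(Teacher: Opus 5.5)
Your outline follows the paper's route: the complement $T$, a meromorphic first integral from Proposition~\ref{prop_codim_1_zero}, an $\mathcal F$-invariant $G$ with $C$ an lc centre as in \cite[Proposition 5.11]{CS20}, and then the $(K_X+G)$-flip identified with the $K_{\mathcal F}$-flip. Your Step~4 identification via $T^+$ is fine. The gap is your justification of the log canonicity of $(X,G)$ in Step~3.

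The weighted blow-up of Proposition~\ref{prop_codim_1_zero}(1) is a small modification $b\colon Z'\to Z$ on which $-D'$ is $b$-ample. On $X$, however, $T\cdot C=-K_{\mathcal F}\cdot C>0$, so $T$ is $f$-ample. Hence $Z'=\mathrm{Proj}_Z\bigoplus_m\OO_Z(-mD)$ is not $X=\mathrm{Proj}_Z\bigoplus_m\OO_Z(mD)$. In fact, since $K_{\mathcal G'}+D'=b^*(K_{\mathcal G}+D)$, the divisor $K_{\mathcal G'}$ is $b$-ample, so $Z'$ is the flipped side $X^+$. Any product or toroidal structure you find there controls at best $(X^+,G^+)$, and this does not transfer back. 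By the negativity lemma, $a(E,X,G)\le a(E,X^+,G^+)$ for every divisor $E$ over $X$. The inequality is strict exactly for those $E$ whose centre on $X$ lies in $C$, and these are precisely the valuations at issue.

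What is actually needed is that $(X,G)$ is lc along $C$, in particular at the unique non-terminal point $P\in C$, while $C$ remains an lc centre. This is a local statement about $\mathcal F$ on $X$ at $P$, and it is where the simple-singularity hypothesis enters, as your last paragraph anticipates. The threshold recipe of Step~2 does not deliver it on its own. The log canonical threshold at $P$ may be smaller than at the generic point of $C$, and then either $(X,G)$ fails to be lc at $P$ or $C$ is not an lc centre.

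The subadjunction $(K_X+G)|_C=K_C+\Theta$ that you use for negativity is a statement about lc pairs. So your negativity estimate also rests on this missing step. In the paper, both the log canonicity of $(X,G)$ and $(K_X+G)\cdot C<0$ come out of the comparison between foliated adjunction and subadjunction \cite[Theorem 4.9]{CS20}, applied to the $G$ of \cite[Proposition 5.11]{CS20}. You should invoke it for log canonicity as well and drop the toroidal argument.

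A minor point: Theorem~\ref{t_adjunction} and Proposition~\ref{p_diff} are stated for divisors. For the invariant curve $C$ in a threefold you need the rank one analogue, obtained by restricting a local generator of $T_{\mathcal F}$ to $C$.
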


\subsection{Existence of flips for co-rank one foliations on threefolds}

Similarly to the rank one case, 
in the co-rank one setting the flip is constructed by finding an $\mathcal F$-invariant divisor $G \ge 0$ such that the flipping curve $C$
is a log canonical centre of $(X, G)$.  
This $G$ is found by taking the union of all the separatrices (formal or otherwise) which meet $C$, 
and then prolonging them to a (formal) neighbourhood of $C$.  
Some additional technical complication is caused by the fact that these divisors are only defined on a neighbourhood of $C$ (and not on all of $X$) and may also only be formal divisors.  These additional complications can be handled by either using Artin/Elkik approximation theorems, or by using recent developments in the MMP for formal schemes, see \cite{lyu2025relativeminimalmodelprogram}.

Of course, in our constructions, the divisor $G$ depends on the flipping ray chosen.  It would be interesting to know if $G$ could be chosen to be independent of the flipping ray.

\begin{question}
Let $X$ be a klt variety and let $\mathcal F$ be a foliation with log canonical singularities.  Let $\phi\colon X \dashrightarrow X'$ be a sequence of $K_{\mathcal F}$-flips.  
Does there exist a divisor $G \ge 0$ on $X$ such that $(X, G)$ is log canonical and  $\phi$ is a sequence of $(K_X+G)$-flips?

\end{question}

Note that the answer to this question remains unclear, even under the assumption that $\mathcal F$ is algebraically integrable (see \cite[Proposition 3.7]{ACSS} for a special case).

\bibliography{math.bib}
\bibliographystyle{alpha}

\end{document}